\documentclass[a4paper,10pt]{article}
\usepackage[utf8x]{inputenc}
\pdfoutput=1

\usepackage{nccfoots}
\usepackage{times}
\usepackage{geometry}
\usepackage[T1]{fontenc}
\usepackage{color}
\usepackage{amsmath}
\usepackage{amssymb}
\usepackage{array}
\usepackage{amsthm}
\usepackage{graphicx}
\usepackage{hyperref}
\usepackage{setspace}
\usepackage{stmaryrd}
\usepackage{longtable}
\usepackage{tabularx}
\usepackage{multirow}
\usepackage{caption}
\usepackage{esint}

\usepackage{xifthen}
\usepackage{csquotes}

\bibliographystyle{alphadin}

\theoremstyle{plain}
\newtheorem{thm}{Theorem}[section]
\newtheorem{prop}[thm]{Proposition}
\newtheorem{cor}[thm]{Corollary}
\newtheorem{lem}[thm]{Lemma}

\theoremstyle{definition}
\newtheorem{defn}[thm]{Definition}

\newtheorem{exmp}[thm]{Example}

\theoremstyle{remark}
\newtheorem{rem}[thm]{Remark}

\theoremstyle{plain}

\newcommand{\Z}{\mathbb{Z}}
\newcommand{\R}{\mathbb{R}}

\newcommand{\C}{\mathbb{C}}

\newcommand{\N}{\mathbb{N}}

\newcommand{\supp}{\mathrm{supp}}

\renewcommand{\O}{\mathcal O}

\newcommand{\identity}{\mathrm{id}}
\newcommand{\scal}{\mathrm{scal}}
\newcommand{\ric}{\mathrm{Ric}}
\newcommand{\trace}{\mathrm{tr}}
\newcommand{\kernel}{\mathrm{ker}}
\newcommand{\End}{\mathrm{End}}

\newcommand{\dv}{\text{ }dV}

\newcommand{\spectrum}{\mathrm{spec}}

\newcommand{\ol}{\overline}
\newcommand{\wt}{\widetilde}
\newcommand{\wh}{\widehat}

\newcommand{\ddtzero}[1][t]{\left.\frac{d}{dt}\right|_{#1=0}}
\newcommand{\SetDefine}[2]{
	\ifthenelse{\isempty{#2}}
	{\left\{#1\right\}}
	{\left\{\vphantom{#2}#1\right.\left|\,\vphantom{#1}#2\right\}}
}
\newcommand{\after}{\circ}	%
\newcommand{\todo}[1]{}
\renewcommand{\todo}[1]{\textcolor{red}{\texttt{\textbf{[#1]}}}}

\newcommand{\ResonanceDominated}[1]{\spectrum\left(#1\right)\cap \left[-\frac{(n-2)^2}{4},0\right) = \SetDefine{-\frac{(n-2)^2}{4}}{}}
\newcommand{\NotResonanceDominated}[1]{\spectrum\left(#1\right)\cap \left[-\frac{(n-2)^2}{4},0\right) \neq \SetDefine{-\frac{(n-2)^2}{4}}{}}

\renewcommand{\title}[1]{{\bfseries #1}\par}
\renewcommand{\author}[1]{\medskip{#1}\par\smallskip}
\newcommand{\affiliation}[1]{{\itshape #1}\par}
\newcommand{\email}[1]{E-mail:~\texttt{#1}\par}

\numberwithin{equation}{section}

\begin{document}
\begin{center}
	\title{\LARGE Optimal coordinates for  Ricci-flat conifolds}
	\vspace{3mm}
	\author{\Large Klaus Kr{\"{o}}ncke$^1$ and {\'{A}}ron Szab{\'{o}}$^2$}
	\vspace{3mm}
	\affiliation{$^1$ Department of Mathematics,  KTH Royal Institute of Technology
 \\Lindstedtsv\"{a}gen 25\\11428 Stockholm, Sweden \\ $^2$ Institute of Astronomy,
	 Nicolaus Copernicus University \\ Grudzi{\k{a}}dzka 5 \\ 87-100 Toru{\'{n}}, Poland}
	\email{kroncke@kth.se\\aron.szabo@v.umk.pl} 
\end{center}
\vspace{2mm}
\begin{abstract}
	We compute the indicial roots of the Lichnerowicz Laplacian on Ricci-flat cones and give a detailed description of the corresponding radially homogeneous tensor fields in its kernel. For a Ricci-flat conifold $(M,g)$ which may have asymptotically conical as well as conically singular ends, we compute at each end a lower bound for the order with which the metric converges to the tangent cone. As a special subcase of our result, we show that any Ricci-flat ALE manifold $(M^n,g)$ is of order $n$ and thereby close a small gap in a paper by Cheeger and Tian.
\end{abstract}

\section{Introduction and main results}
Ricci-flat metrics belong perhaps to the most interesting class of Riemannian metrics studied in differential geometry and theoretical physics. Compact Ricci-flat manifolds are particularly hard to find.
For a long time, any known compact Ricci-flat manifold was actually flat until the resolution of the Calabi conjecture by Yau provided the existence of other examples \cite{Yau78}.

It is easier to construct Ricci-flat metrics on noncompact manifolds where a lot of recent work has been focusing on. The Ricci-flat manifolds we are considering in this paper are allowed to have a finite number of ends which are either \emph{asymptotically conical} or \emph{conically singular}. We will refer to these with the unifying notion of \emph{conifolds}.

Various examples of asymptotically conical Ricci-flat manifolds have been discovered so far. In the subclass of Ricci-flat \emph{asymptotically locally Euclidean} (ALE for short) manifolds, many examples are provided by Kronheimer's classification \cite{Kro89}. Asymptotically conical Ricci-flat manifolds which are not ALE were found in \cite{BS89,Boh99}. Various additional examples were found over the last decade, see e.g.\ \cite{CH13,Chi19,FHN21}.
On the other hand, many examples of Ricci-flat orbifolds are known (e.g.\ noncollapsed limits of compact Ricci-flat 4-manifolds \cite{And90}) and conically singular (nonorbifold) Ricci-flat metrics were constructed in \cite{HS17}.
 
The goal of this paper lies in computing for each end of an arbitrary Ricci-flat conifold the precise order. We compute a formula for the optimal decay, given entirely in terms of spectral data on the link of the cone. This extends previous work of \cite{BKN89,CT94,CH13}, who focus on the ALE situation or the K{\"{a}}hler case.
 
A necessary step in order to compute this order lies in computing possible growth and decay rates for solutions of the linearization of the equation $\ric=0$ on the tangent cone. The linearization is up to a gauge term given by the \emph{Lichnerowicz Laplacian} and the growth and decay rates are known as \emph{indicial roots}. We give a complete computation of all indicial roots, based on the idea of commuting operators and the formulas we get are surprisingly simple. 

\subsection{Indicial roots of the Lichnerowicz Laplacian on Ricci-flat cones}
Let $\wh{M}^{n-1}$ be a closed manifold and let $\ol{P}$ be a self-adjoint Laplace type operator on $\ol{M}=(0,\infty)\times \wh{M}$, which is of the form
\begin{align}\label{eq:conical_structure_0}
	\ol{P}=-\partial_{rr}^2-\frac{n-2}{n}\partial_r+\frac{1}{r^2}\wh{P}
\end{align}
for some Laplace type operator $\wh{P}$ on $\wh{M}$. The Laplace--Beltrami operator with respect to the cone metric on $\ol{M}$ is the simplest operator of this type, but we also allow more general Laplace type operators on vector bundles. The Hodge Laplacian acting on the exterior algebra and the aforementioned Lichnerowicz Laplacian on symmetric 2-tensors are typical examples. If $\ol{P}$ is of such a form and $\nu\in\R$ is an eigenvalue of $\wh{P}$, we call the (possibly complex) values
\begin{align}\label{eq:indicial_roots}
	\xi_{\pm}(\nu):=-\frac{n-2}{2}\pm\sqrt{\frac{(n-2)^2}{4}+\nu}
\end{align}
\emph{indicial roots} of $\ol{P}$ (here, we use the convention $\sqrt{x}:=\sqrt{|x|}\cdot i$ for $x<0$). Their union is called \emph{indicial set} of $\ol{P}$. The real parts of  the indicial roots correspond to possible growth and decay rates of radially homogeneous solutions of the equation
\begin{align*}
	\ol{P}u=0.
\end{align*}
For the Laplace--Beltrami operator on the cone $\ol{M}$, the indicial roots are just calculated from the Laplace eigenvalues on $\wh{M}$. For more general operators on vector bundles which are of this form, these are in general much harder to compute.

For an operator like the Lichnerowicz Laplacians, it seems to be particularly difficult, since the Lichnerowicz Laplacians on $\wh{M}$ and $\ol{M}$ are in a very complicated relation to each other, see the formulas in \cite[Lemma~7.4]{GMS18} and \cite[Lemma~4.3]{Del07} which are lenghty and not very practical for our purposes. This comes from the fact that one has to split up the symmetric $(0,2)$-tensors on $\ol{M}$ in radial, tangential and mixed components and the covariant derivative does not preserve this splitting.
Our first main result overcomes these problems in the case of Ricci-flat cones.
\begin{thm}\label{mainthm:indicial_roots_LL}
	Let $(\ol{M}^n,\ol{g})$ be a Ricci-flat cone over a closed manifold $(\wh{M}^{n-1},g)$ with $\wh{\ric}=(n-2)\wh{g}$.\\ Let $0=\lambda_0<\lambda_1\ldots$ be the eigenvalues of the Laplace--Beltrami operator on $\wh{M}$, $\mu_1<\mu_2<\ldots$ be the eigenvalues of the connection Laplacian on divergence-free 1-forms on $\wh{M}$ and $\kappa_1<\kappa_2<\ldots$ be the eigenvalues of the Einstein operator on transverse and traceless tensors on $\wh{M}$.
	\begin{itemize}
		\item[(i)]  The indicial set of the Lichnerowicz Laplacian $\ol{\Delta}_L$ on $\ol{M}$ is given by
		\begin{align*}
			&\SetDefine{\xi_{\pm}(\kappa_i),\xi_{\pm}(\mu_i+1)- 1,\xi_{\pm}(\mu_i+1)+ 1
			,\xi_{\pm}(\lambda_i)-2,\xi_{\pm}(\lambda_i),\xi_{\pm}(\lambda_i)+2
			}{i\in\N}\\
			&\qquad\cup\SetDefine{-n,2-n,0,2}{}.
		\end{align*}
		\item[(ii)] The indicial set of $\ol{\Delta}_L$ on tensors satisfying the linearized Bianchi gauge is given by
		\begin{align*}
			\SetDefine{\xi_{\pm}(\kappa_i),\xi_{\pm}(\mu_i+1)- 1
			,\xi_{\pm}(\lambda_i)-2,\xi_{\pm}(\lambda_i)
			}{i\in\N}\cup\SetDefine{-n,0}{}.
		\end{align*}
		\item[(iii)] The indicial set of $\ol{\Delta}_L$ on tensors satisfying the linearized Bianchi gauge, but which are not Lie derivatives, is given by
		\begin{align*}
			E:=\SetDefine{\xi_{\pm}(\kappa_i),\xi_{\pm}(\lambda_i)
			}{i\in\N}.
		\end{align*}
	\end{itemize}
\end{thm}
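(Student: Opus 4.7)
The strategy is to decompose symmetric $(0,2)$-tensors on the cone by combining the radial--tangential splitting with a Hodge-type decomposition of the angular components on the link, and then to show that, although the Lichnerowicz Laplacian does not preserve the naive splitting, the Einstein/Ricci-flat condition forces it to be block-diagonal in the refined decomposition. Each block then reduces to a scalar cone Laplace-type operator of the form \eqref{eq:conical_structure_0}, and the indicial roots are read off from \eqref{eq:indicial_roots}.

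Concretely, I first write any symmetric $(0,2)$-tensor on $\ol{M}$ as $T = u\, dr^2 + 2\,dr \odot \beta + h$, with $\beta$ and $h$ regarded fibrewise as a $1$-form and a symmetric $2$-tensor on $\wh{M}$. To the angular parts I apply the Berger--Ebin decomposition: $\beta = d\phi + \beta^\perp$ with $\wh{\delta}\beta^\perp = 0$, and $h = h^{TT} + \mathcal{L}_X \wh{g} + \psi\,\wh{g}$ with $X = \mathrm{grad}\,\phi' + X^\perp$ and $\wh{\delta}X^\perp = 0$. The cone connection formulas $\ol{\nabla}_Y \partial_r = r^{-1}Y$ and $\ol{\nabla}_Y Z = \wh{\nabla}_Y Z - r\wh{g}(Y,Z)\partial_r$ (for $Y, Z$ tangent to $\wh{M}$), combined with the Einstein identity $\wh{\mathrm{Ric}} = (n-2)\wh{g}$ equivalent to $\ol{\mathrm{Ric}} = 0$, let me express $\ol{\Delta}_L T$ on each block. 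Ricci-flatness annihilates the Ricci-contraction term, and on the Einstein link the curvature endomorphism $\mathring{\ol{R}}$ reduces to an algebraic shift. The net effect is that on each block $\ol{\Delta}_L$ takes the form \eqref{eq:conical_structure_0} with an angular operator $\wh{P}_{\mathrm{block}}$ whose spectrum equals $\{\kappa_i\}$, $\{\mu_i + 1\}$, or $\{\lambda_i\}$, so its indicial roots are those of \eqref{eq:indicial_roots} shifted by $0$, $\pm 1$, or $\pm 2$ depending on the block. Collecting the contributions and separately accounting for the $\lambda_0 = 0$ modes (constants on $\wh{M}$ together with the associated Killing/conformal-Killing contributions) yields (i), including the isolated roots $\{-n, 2-n, 0, 2\}$.

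For part (ii), on a Ricci-flat background the Bianchi operator $\ol{B} = \ol{\delta} + \tfrac{1}{2}d\,\mathrm{tr}_{\ol{g}}$ commutes with $\ol{\Delta}_L$, so $\ker \ol{B}$ is invariant. A blockwise computation of $\ol{B}$ on the homogeneous tensors above shows that the gauge imposes one relation among the three scalar modes, reducing them to the indicial roots $\xi_\pm(\lambda_i)$ and $\xi_\pm(\lambda_i) - 2$; it imposes one relation between the two $1$-form modes, leaving $\xi_\pm(\mu_i + 1) - 1$; and leaves the TT-block unaffected. The surviving $\lambda_0 = 0$ modes account for the isolated roots $\{-n, 0\}$. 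For part (iii), I further remove Lie derivatives $\mathcal{L}_V \ol{g}$ for homogeneous vector fields $V$ on the cone. Splitting $V$ into a radial part $f(\theta)\partial_r$ and a tangential part $W(\theta)$ and imposing the Bianchi gauge identifies the removable modes as precisely $\xi_\pm(\mu_i + 1) - 1$ (from divergence-free tangential $W$) and $\xi_\pm(\lambda_i) - 2$ (from the radial part), leaving exactly $E = \{\xi_\pm(\kappa_i), \xi_\pm(\lambda_i)\}$.

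The main obstacle is the block-diagonalization step: showing that the refined Berger--Ebin splitting genuinely diagonalizes $\ol{\Delta}_L$, despite the visible mixing of radial and tangential components by the covariant derivative. The precise matching of each block to a scalar cone Laplace-type operator with the correctly shifted eigenvalue is a delicate bookkeeping calculation relying essentially on the Einstein condition on the link, and this is exactly where the Ricci-flatness of the cone is indispensable; it is what lets one sidestep the unwieldy formulas of \cite[Lemma~7.4]{GMS18} and \cite[Lemma~4.3]{Del07} mentioned in the introduction.
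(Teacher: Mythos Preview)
Your approach is genuinely different from the paper's, and it is precisely the direct route the paper chooses to avoid. You propose to decompose symmetric $2$-tensors on $\ol{M}$ via the radial/tangential splitting refined by Berger--Ebin on the link, and then assert that $\ol{\Delta}_L$ is block-diagonal with each block a \emph{scalar} conical operator. The paper instead never computes $\Box_L$ on these blocks directly: it \emph{constructs} eigentensors of $\Box_L$ by starting from eigendata on the link (TT-tensors, divergence-free $1$-forms, eigenfunctions), lifting them to harmonic sections on the cone for $\ol{\Delta}_0$ or $\ol{\Delta}_1$, and then pushing forward along $d$ and $\ol{\delta}^*$ using the commutation identities \eqref{commutation}, which on a Ricci-flat background send $\ker(\ol{\Delta}_1)$ to $\ker(\ol{\Delta}_L)$. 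This yields explicit radially-homogeneous elements of $\ker(\ol{\Delta}_L)$ whose exponents are read off immediately; completeness is then checked by a dimension count against $L^2(S^2\ol{M}|_{\{1\}\times\wh{M}})$. Parts (ii) and (iii) follow almost for free, because most of the constructed tensors are \emph{visibly} of the form $\ol{\delta}^*\ol{\omega}=\tfrac12\mathcal{L}_{\ol{\omega}^\sharp}\ol{g}$, so one sees at once which are Lie derivatives and which satisfy $B_{\ol{g}}=0$.

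The gap in your proposal is the claim that on each non-TT block $\ol{\Delta}_L$ reduces to a scalar operator of the form \eqref{eq:conical_structure_0} with $\wh{P}_{\mathrm{block}}$ having spectrum $\{\mu_i+1\}$ or $\{\lambda_i\}$. It does not: the tangential operator $\Box_L$ genuinely couples the components. In the divergence-free $1$-form sector it acts on the pair $(\wh{\delta}^*\omega,\ \omega\odot dr)$ as a $2\times2$ system whose eigenvalues are $\mu^{(1)}_{i,\pm}=\eta(\xi_\pm(\mu_i+1)-1)$, not $\mu_i+1$; in the function sector it acts on a four-dimensional space as a coupled system with eigenvalues $\lambda_i$ (twice) and $\lambda^{(2)}_{i,\pm}=\eta(\xi_\pm(\lambda_i)-2)$ (see the mixed eigenspaces in Theorem~\ref{thm_spectrum_tangential_LL}). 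The shifts $\pm1,\pm2$ in the indicial roots are \emph{outputs} of diagonalizing these coupled systems, not inputs from the block structure; your sketch skips exactly this step, which is the ``lengthy and not very practical'' computation of \cite{GMS18,Del07} alluded to in the introduction. The direct route can be made to work, but you would need to carry out and record those diagonalizations explicitly, and your blockwise treatment of (ii)--(iii) inherits the same missing bookkeeping.
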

The main result follows from Theorem~\ref{thm_spectrum_tangential_LL} and Proposition~\ref{prop:Bianchi_indicial_values} below. We find the indicial roots by writing down all possible growth and decay rates of  radially homogeneous tensors in $\kernel(\ol{\Delta}_L)$.  Let us briefly outline how we get all these tensors (recall that a tensor with vanishing trace and divergence is called a TT-tensor):
\begin{itemize}
	\item If $h$ is a TT-tensor on $\wh{M}$, any $r^{\alpha}h$ is a TT-tensor on $\ol{M}$. It is quite straightforward to show that if $\wh{\Delta}_Eh=\kappa h$ (where $\wh{\Delta}_E$ is the Einstein operator on $\wh{M}$), then $r^{\xi_{\pm}(\kappa)}h\in\kernel(\ol{\Delta}_L)$.
	\item If $v\in C^{\infty}(\wh{M})$ satisfies $\wh{\Delta} v=\lambda v$, then $r^{\xi_{\pm}(\lambda)}v$ is harmonic on $\ol{M}$ and therefore, we also have  that $r^{\xi_{\pm}(\lambda)}v\cdot \ol{g}\in \kernel(\ol{\Delta}_L)$. 
\end{itemize}
In these two cases, the TT and the conformality condition simplify the calculations to a great extent. It would be however far too complicated to work out the general formulas after relaxing these conditions. Instead, 
in order to get all other $r$-homogeneous tensors in $\kernel(\ol{\Delta}_L)$, we exploit commutation formulas involving $\ol{\Delta}_L$ and other operators to a great extent:
\begin{itemize}
	\item If $\omega$ is a divergence-free $1$-form on $\wh{M}$ such that $\wh{\Delta}_1\omega:=\wh{\nabla}^*\wh{\nabla}\omega=\mu\cdot \omega$, then a short calculation shows that $r^{\xi_{\pm}(\mu+1)}\omega$ is again divergence free and $\ol{\Delta}_1(r^{\xi_{\pm}(\mu+1)}\omega)=0$. If we now apply the symmetric part $\ol{\delta}^*$ of the covariant derivative, we get $\ol{\Delta}_L(\ol{\delta}^*(r^{\xi_{\pm}(\mu+1)}\omega))=0$ due to a commutation formula for Ricci-flat manifolds. In addition, we have $\ol{\delta}^*(r^{\xi_{\pm}(\mu+1)}\omega)=\mathcal{O}(r^{\xi_{\pm}(\mu+1)-1})$ and so, $\xi_{\pm}(\mu+1)-1$ are both indicial roots. Because $\ol{\Delta}_L$ is also of the form \eqref{eq:conical_structure_0} all indicial roots must be of the form \eqref{eq:indicial_roots}. Thus, there are further indicial roots which are dual to those two. More precisely, $\xi_{+}(\mu+1)+1$ appears as the weight dual to $\xi_{-}(\mu+1)-1$ and $\xi_{+}(\mu+1)+1$ appears as the weight dual to $\xi_{-}(\mu+1)-1$.
	\item We extend this kind of argumentation when we discuss further indicial roots coming from eigenfunctions of the Laplace--Beltrami operator. If $v\in C^{\infty}(\wh{M})$ satisfies $\wh{\Delta} v=\lambda v$, then $v_{\pm}:=r^{\xi_{\pm}(\lambda)}v$ are harmonic on $\ol{M}$. By commutation, $d v_{\pm}\in\kernel(\ol{\Delta}_1)$, with $d v_{\pm}=\mathcal{O}(r^{\xi_{\pm}(\lambda)-1})$. Thus, $\xi_{\pm}(\lambda)-1$ are both indicial roots of $\ol{\Delta}_1$ and as in the previous duality argument, there are also the dual weights $\xi_{\mp}(\lambda)+1$, with corresponding harmonic forms 
	$\omega_+=r^{\xi_{+}(\lambda)-\xi_{-}(\lambda)+2}d v_{-}$ and $\omega_-=r^{\xi_{-}(\lambda)-\xi_{+}(\lambda)+2}d v_{+}$.
	Applying $\ol{\delta}^*$ to $dv_{\pm}$ and $\omega_{\pm}$ yields four elements in $\ker(\ol{\Delta}_L)$ with decay rates
	\begin{align*}
		\SetDefine{(\xi_{\pm}(\lambda)- 1)-1,(\xi_{\pm}(\lambda)+ 1)-1}{}=\SetDefine{\xi_{\pm}(\lambda),\xi_{\pm}(\lambda)-2}{}.
	\end{align*}
	Again, duality implies that $\xi_{\pm}(\lambda)+2$ is also an indicial root as it is dual to $\xi_{\mp}(\lambda)-2$.  Finally, if $\lambda=0$ (and hence $v$ is constant), some of the constructed tensors vanish, which is why we need to deal only with the four values
	\begin{align*}
		\SetDefine{-n,2-n,0,2}{}=\SetDefine{\xi_-(0)-2,\xi_-(0),\xi_+(0),\xi_+(0)+2}{}.
	\end{align*}
\end{itemize}
To conclude the proof, one of course needs to show that the above arguments have, in fact, constructed all the indicial roots.
The method explained here not only allows us to compute the indicial roots in an efficient way, but also enables us to read of almost directly which of the corresponding tensors are geometrically essential (cf. Theorem~\ref{mainthm:indicial_roots_LL} (ii) and (iii)).

As a byproduct of our result, we get a new proof of the following theorem (cf. page~\pageref{proof_linear_stability}):
\begin{thm}[{\cite[Theorem~1.2]{Kro17}}]\label{thm_linear_stability}
	With the notation of Theorem~\ref{mainthm:indicial_roots_LL}, we have $\ol{\Delta}_L\geq0$ in the $L^2$-sense if and only if the TT-eigenvalues of the Einstein operator satisfy
	\begin{align*}
		\kappa_i\geq -\frac{(n-2)^2}{4}\qquad \text{ for all }i\in\N.
	\end{align*}
\end{thm}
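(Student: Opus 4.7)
The plan is to interpret $L^2$-nonnegativity of $\ol{\Delta}_L$ via separation of variables on the cone and then to read off the criterion directly from Theorem~\ref{mainthm:indicial_roots_LL}.

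First, I would exploit the fact that $\ol{\Delta}_L$ has the conical structure~\eqref{eq:conical_structure_0}: the Hilbert space $L^2(\ol{M};S^2T^*\ol{M})$ decomposes orthogonally into $\ol{\Delta}_L$-invariant subspaces, one for each effective angular eigenvalue $\nu$ produced by the constructions underlying Theorem~\ref{mainthm:indicial_roots_LL}. On each such subspace the operator becomes, after identification with $L^2((0,\infty),r^{n-1}\,dr)$, the one-dimensional radial operator
\begin{align*}
L_\nu\;=\;-\partial_{rr}^2-\frac{n-1}{r}\partial_r+\frac{\nu}{r^2}.
\end{align*}
Hence $\ol{\Delta}_L\geq 0$ in the $L^2$-sense is equivalent to $L_\nu\geq 0$ for every $\nu$ arising in this decomposition.

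Second, I would apply the classical Hardy inequality on the half-line with measure $r^{n-1}\,dr$. Integration by parts gives
\begin{align*}
\langle L_\nu u,u\rangle_{L^2(r^{n-1}\,dr)}\;=\;\int_0^\infty\!\!\left(|u'|^2+\frac{\nu}{r^2}|u|^2\right)r^{n-1}\,dr
\end{align*}
for $u\in C_c^\infty((0,\infty))$, and the sharp Hardy bound $\int_0^\infty|u'|^2 r^{n-1}\,dr\geq\frac{(n-2)^2}{4}\int_0^\infty\frac{|u|^2}{r^2}r^{n-1}\,dr$ shows that $L_\nu\geq 0$ if and only if $\nu\geq-(n-2)^2/4$, i.e.\ if and only if the indicial roots $\xi_\pm(\nu)$ are real rather than genuinely complex.

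Third, I would extract the effective values of $\nu$ from Theorem~\ref{mainthm:indicial_roots_LL}(i). Each indicial root listed there is an integer shift (by $k\in\{-2,-1,0,1,2\}$) of some $\xi_\pm(\nu)$, and reality of the shifted value is the same as reality of $\xi_\pm(\nu)$. Grouping accordingly, the TT-contribution corresponds to $\nu=\kappa_i$, the divergence-free $1$-form contribution to $\nu=\mu_i+1$, the eigenfunction contribution to $\nu=\lambda_i$, and the four special values $-n,2-n,0,2$ to $\nu=0$. Since $\mu_i\geq 0$ and $\lambda_i\geq 0$, one has $\mu_i+1\geq 1$ and $\lambda_i\geq 0$, which are well above $-(n-2)^2/4$ for $n\geq 3$; so the only $\nu$ whose sign can potentially violate the Hardy threshold are the TT-eigenvalues $\kappa_i$, yielding the claimed criterion.

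The main obstacle is the rigorous execution of the invariant decomposition in the first step, where one must carefully track the non-TT, non-conformal blocks generated by $\ol{\delta}^*$ applied to exact $1$-forms $r^\alpha dv$ and by $\ol{\delta}^*$ applied to $r^\alpha\omega$ for coclosed eigen-$1$-forms $\omega$. One has to verify that $\ol{\Delta}_L$ remains block-diagonal in the chosen basis of each finite-dimensional angular subspace, so that after diagonalization the radial reduction truly outputs scalar operators $L_\nu$ with the eigenvalues $\nu$ listed above. Once this is established, the Hardy inequality supplies the ``if'' direction directly, while failure of nonnegativity when some $\kappa_i<-(n-2)^2/4$ is witnessed by the standard oscillating test section $\chi(\log r)\,r^{-(n-2)/2}\cos(\tau\log r)\cdot h_i$ with $\tau=\sqrt{-\kappa_i-(n-2)^2/4}$, giving the ``only if'' direction.
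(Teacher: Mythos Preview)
Your proposal is correct and follows essentially the same strategy as the paper's proof: reduce $\ol{\Delta}_L\geq 0$ to the condition that the tangential operator $\Box_L$ is bounded below by $-(n-2)^2/4$ via separation of variables and the Hardy inequality, then check that all non-TT tangential eigenvalues automatically clear this bound so that only the $\kappa_i$ matter.

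Two small remarks. First, the paper argues directly with the eigenvalues of $\Box_L$ as computed in Theorem~\ref{thm_spectrum_tangential_LL}, rather than extracting them from the indicial set in Theorem~\ref{mainthm:indicial_roots_LL}(i). Your detour through reality of indicial roots is equivalent but introduces a notational ambiguity: in your step~3 the symbol $\nu$ is used both for eigenvalues of $\Box_L$ and for the ``source'' quantities $\kappa_i,\mu_i+1,\lambda_i$, which are not the same (e.g.\ the actual $\Box_L$-eigenvalues coming from coclosed $1$-forms are $\mu^{(1)}_{i,\pm}=\eta(\xi_\pm(\mu_i+1)-1)$, not $\mu_i+1$). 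The logic survives because reality of $\xi_\pm(\nu)+k$ for real $k$ is equivalent to reality of $\xi_\pm(\nu)$, but it would be cleaner to cite Theorem~\ref{thm_spectrum_tangential_LL} directly. Second, the ``main obstacle'' you flag---block-diagonality of $\ol{\Delta}_L$ on the angular subspaces---is exactly what Lemma~\ref{lem:conical_Schroedinger_operators} and Theorem~\ref{thm_spectrum_tangential_LL} establish, so no further work is needed there; the paper packages this as Lemma~\ref{lem:nonnegative_conical_operators}. Your weaker bounds $\mu_i\geq 0$ and $\lambda_i\geq 0$ suffice for the argument, though the paper uses the sharper $\mu_i\geq n-2$ and $\lambda_i\geq n-1$.
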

The condition $\ol{\Delta}_L\geq0$ is referred in the literature as the condition of \emph{linear stability}, which appears in the study of the Einstein--Hilbert action (see e.g.\ \cite[Chapter 4G]{Bes08}) and in the study of dynamical stability of fixed points of the Ricci flow (see e.g.\ \cite{Ses06}).

The proof in \cite{Kro17} is based on a suitable decomposition of the space of symmetric $(0,2)$-tensors on $\ol{M}$ and many tedious $L^2$-estimates. The new proof presented in this paper is much more stringent and due to Theorem~\ref{mainthm:indicial_roots_LL}, the structure of $\ol{\Delta}_L$ on the cone is now understood in much greater detail.

Besides its importance for the computation of the order of Ricci-flat AC/CS ends (cf. Definition~\ref{defn_conifolds}), we think that Theorem~\ref{mainthm:indicial_roots_LL} is also of great independent interest. Moreover, there are further potential applications, for example the desingularization of Einstein conifolds by smooth Einstein metrics. In addition, the method of using commuting operators to compute indicial roots can also be used for other operators, in particular the Hodge Laplacian for the exterior algebra, for which the exterior derivative and its adjoint would serve as commuting operators. This could in turn be used to compute convergence rates of other geometric structures at infinity. 

\subsection{The order of Ricci-flat conifolds}\label{subsec:order}
Let us start this subsection by making the notion of conifolds precise.
\begin{defn}\label{defn_conifolds}\label{def:manifold-with-ends}
	A manifold $M^n$ is called a manifold with ends if there exists a compact subset $K\subset M$, called the core of $M$, such that 
	\begin{itemize}
		\item[(i)] $M\setminus K$ consists of a disjoint union of connected manifolds $M_i$, $i=1,\ldots N$, called the ends of $M$, and 
		\item[(ii)] for each $i=1,\ldots N$, there exists a closed manifold $\wh{M}_i$ such that $M_i$ is diffeomorphic to the manifold $(0,\infty)\times \wh{M}_i$. 
	\end{itemize}
\end{defn}
\noindent In the following, we use the convention $\nabla^0:=\identity$.
\begin{defn}\label{def:conifold}
	Let $M^n$ be a manifold with ends $M_i$, $i=1,\ldots, N$, endowed with a Riemannian metric~$g$.
	\begin{itemize}
		\item[(i)] We say that an end $M_i$ is called \emph{asymptotically conical} if there exist constants $\tau_i,R_i>0$, a Riemannian metric $\wh{g}_i$ on $\wh{M}_i$ and a diffeomorphism $\varphi_i: M_i\to (R_i,\infty)\times \wh{M}_i$ such that
		\begin{align*}
			|\ol{\nabla}^k((\varphi_i)_*g-\ol{g_i})|_{\ol{g_i}}=\mathcal{O}(r^{-\tau_i-k}),\qquad r \to\infty,
		\end{align*}
		where $\ol{g}_i=dr^2+r^2\wh{g}_i$ denotes the cone metric on $(0,\infty)\times \wh{M}_i$ and $\ol{\nabla}$ denotes its Levi-Civita connection.
		\item[(ii)] We say that an end $M_i$ is called \emph{conically singular} if there exist constants $\tau_i,R_i>0$, a Riemannian metric $\wh{g}_i$ on $\wh{M}_i$ and a diffeomorphism $\varphi_i: M_i\to (0,R_i)\times \wh{M}_i$ such that
		\begin{align*}
			|\ol{\nabla}^k((\varphi_i)_*g-\ol{g_i})|_{\ol{g_i}}=\mathcal{O}(r^{+\tau_i-k}),\qquad r \to 0,
		\end{align*}
		where, again, $\ol{g}_i=dr^2+r^2\wh{g}_i$ denotes the cone metric on $(0,\infty)\times \wh{M}_i$ and $\ol{\nabla}$ denotes its Levi-Civita connection.
	\end{itemize}
	In both cases, $\varphi_i$ is called an asymptotic chart and $\tau_i$ is called an order of the end $M_i$.
	We call the manifold $((0,\infty)\times \wh{M}_i,\ol{g}_i)$ the \emph{tangent cone} of the end $M_i$, and we say that $M_i$ is tangent to its tangent cones.
	Furthermore, we call $(M,g)$
	\begin{itemize}
		\item[(iii)] \emph{asymptotically conical} (AC for short) if all ends are asymptotically conical;
		\item[(iv)] \emph{conically singular} (CS for short) if all ends are conically singular and
		\item[(v)] \emph{conifold} (AC/CS for short) if each end is either asymptotically conical or conically singular.
	\end{itemize}
\end{defn}
\noindent Evidently, the order of an end is not unique: if an end is of order $\tau$, then it is also of order $\tau'$ for any $\tau'<\tau$.
Our goal in this paper is to find a lower bound for the order which is as large as possible.

\noindent To cover a special situation, which we later call the \emph{resonance-dominated} case, we introduce a slightly refined notion of asymptotic order:
\begin{defn}\label{def:weak-order-of-a-conifold}
	With the notation of Definition~\ref{defn_conifolds},
	an asymptotically conical end $(M_i,g_i)$ of a conifold is called asymptotically conical \emph{weakly of order $\tau_i$} if there exists an asymptotic chart $\phi_i$ in which we have for all $k\in\N$ that
	\begin{align*}
		|\ol{\nabla}^k(\varphi_i)_*g-\ol{g_i}|_{\ol{g_i}}=\mathcal{O}(r^{-\tau_i-k}\log(r)),\qquad r \to\infty.
	\end{align*}
\end{defn}
\noindent Note that if and AC end is weakly of order $\tau$, then it is of order $\tau-\epsilon$ for all $\epsilon>0$.

\begin{center}
	\begin{figure}[tbh!]
		\centering
		\includegraphics[width=0.7\linewidth]{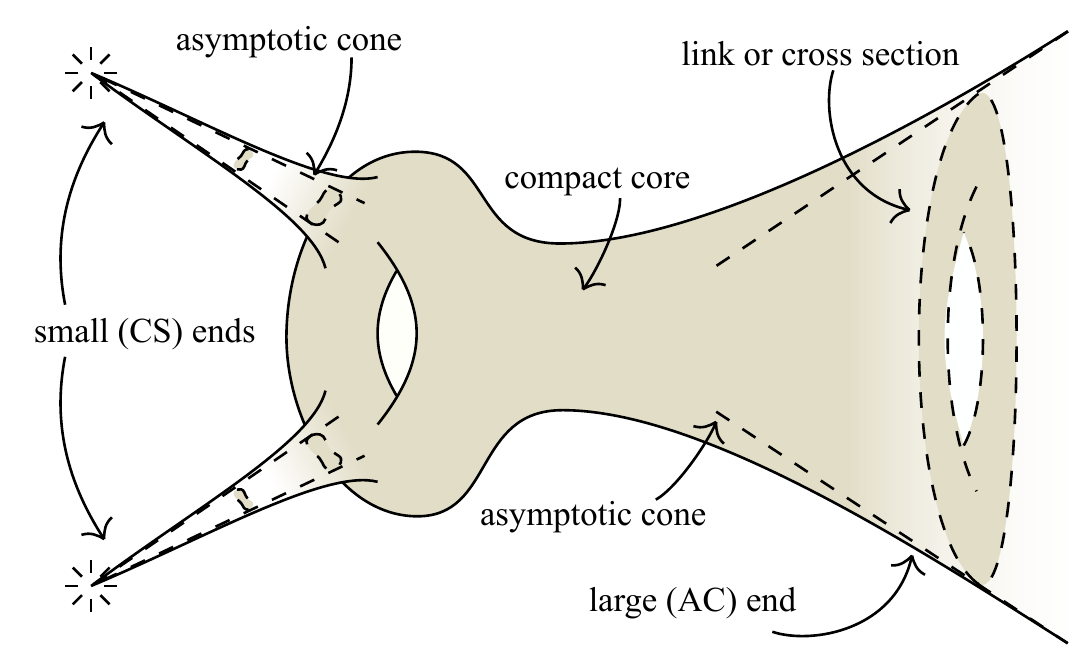}
		\caption[Schematic picture of a conifold]{Schematic picture of a conifold, cf. Definitions~\ref{def:manifold-with-ends} and \ref{def:conifold}.}
		\label{fig:conifold}
	\end{figure}
\end{center}

Now let $(\ol{M},\ol{g})$ be a Ricci-flat cone and $\lambda_i,\mu_i,\kappa_i$ and
$E$ as in Theorem~\ref{mainthm:indicial_roots_LL}. Denote by $\mathrm{Re}(E)\subset \R$ the set of real parts of elements in $E$ and define
\begin{align*}
	E_+:&=\mathrm{Re}(E)\cap(0,\infty)=\SetDefine{\xi_{+}(\kappa_i),\xi_{+}(\lambda_i)
	}{i\in\N,\kappa_i>0 }.
\end{align*}
and
\begin{align*}
	E_-:&=\mathrm{Re}(-E)\cap(0,\infty)\\
	&=\SetDefine{-\xi_{-}(\kappa_i),-\xi_{-}(\lambda_i)}{i\in\N} \cup \SetDefine{-\xi_{+}(\kappa_j)}{i\in\N,-\frac{(n-2)^2}{4}\leq\kappa_j<0} \\
	&\qquad\cup 
	\SetDefine{-\mathrm{Re}(\xi_{\pm}(\kappa_j))=\frac{n-2}{2}}{i\in\N, \kappa_j<-\frac{(n-2)^2}{4}},
\end{align*}
see also Figure~\ref{fig:generic-decay} on page~\pageref{fig:generic-decay}.
Furthermore, we define
\begin{align}\label{eq:rates}
	\xi_+:=\min E_+\qquad 	\xi_-:=\min E_-.
\end{align}
Note that both numbers are positive and depend entirely on spectral data on the link $(\wh{M},\wh{g})$ of the cone. 
Note also that there are no a~priori positive lower or upper bounds for $\xi_+$ and $\xi_-$.
We write $\xi_+(\wh{M},\wh{g})$, resp.\ $\xi_-(\wh{M},\wh{g})$ if we wish to emphasize the dependence of these values on $(\wh{M},\wh{g})$.
\begin{defn}
	We call a Ricci-flat cone \emph{resonance-dominated} if \[\SetDefine{\kappa_i}{i\in\N}\cap\left[-\frac{(n-2)^2}{4},0\right)=\SetDefine{-\frac{(n-2)^2}{4}}{}.\] An end of a conifold is called resonance-dominated if its tangent cone is resonance-dominated.
\end{defn}
\noindent Now we are able to formulate the second main result of this paper.
\begin{thm}\label{mainthm:conifold_rate}
	Let $(M^n,g)$ be a Ricci-flat conifold with ends $M_i$, $i=1,\ldots,N$, which are modeled by Ricci-flat cones over Einstein manifolds $(\wh{M}_i,\wh{g}_i)$. Then the following assertions hold:
	\begin{itemize}
		\item[(i)] If $M_i$, $i\in\SetDefine{1,\ldots,N}{}$, is an asymptotically conical end, then it is of order $\xi_-(\wh{M}_i,\wh{g}_i)$ if it is not resonance-dominated and weakly of order $\frac{n-2}{2}$ otherwise.
		\item[(ii)] If $M_i$, $i\in\SetDefine{1,\ldots,N}{}$, is a conically singular end, then it is of order $\xi_+(\wh{M}_i,\wh{g}_i)$.
	\end{itemize} 
\end{thm}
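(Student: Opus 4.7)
The plan is to reduce the statement to an asymptotic expansion of the metric perturbation $h:=\varphi_i^*g-\ol{g}_i$ on each end separately, and then to read the leading rate off Theorem~\ref{mainthm:indicial_roots_LL}. On a given end I would first perturb the chart $\varphi_i$ by an additional diffeomorphism so that the resulting $h$ lies in the linearized Bianchi gauge $\ol\delta_{\ol{g}_i}h+\tfrac12 d\trace_{\ol{g}_i}h=0$. Since $h$ already decays (resp.\ vanishes) at the rate $\tau_i$ supplied by Definition~\ref{def:conifold}, the gauge-fixing PDE is a solvable linear elliptic system on the end, preserving the initial order up to an arbitrarily small loss at indicial weights. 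In this gauge, $\ric(g)=0$ takes the schematic form
\begin{align*}
\ol\Delta_L h=Q(h,\ol\nabla h),
\end{align*}
where $Q$ is a smooth fibre map vanishing quadratically at $h=0$; consequently, if $h=\mathcal O(r^{-\tau-k})$ to all orders on an AC end, the right-hand side is $\mathcal O(r^{-2\tau-2-k})$, and analogously with opposite signs on a CS end.

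Next I would invoke the standard polyhomogeneous expansion theorem for elliptic Fuchsian operators on conic ends (in the tradition of Kondrat'ev, Lockhart--McOwen, and Melrose's b-calculus): if $\ol\Delta_L h=f$ with $f$ decaying strictly faster than the current bound on $h$, then $h$ admits an expansion
\begin{align*}
h\sim\sum_{\xi\in I}r^{\xi}(\log r)^{p(\xi)}\phi_\xi+\text{remainder},
\end{align*}
with smooth sections $\phi_\xi$ of the appropriate bundle over $\wh{M}_i$ and $I$ the part of the indicial set of $\ol\Delta_L$ lying between the old and the new weight. A further diffeomorphism on the end kills every $\phi_\xi$ coming from a Lie derivative $\mathcal L_X\ol{g}_i$ without altering the intrinsic geometry, so the effective indicial set reduces to the set $E$ of part~(iii) of Theorem~\ref{mainthm:indicial_roots_LL}. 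A bootstrap that alternates the expansion theorem with the quadratic estimate on $Q$ then improves the decay rate step by step; since a positive rate $\xi$ is replaced after one iteration by approximately $2\xi$, the nonlinear error is always strictly better than the current leading linear order, and the iteration terminates exactly at $\xi_-=\min E_-$ in the AC case and at $\xi_+=\min E_+$ in the CS case, giving both parts of the theorem.

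Finally, in the resonance-dominated case the two indicial roots $\xi_\pm(-(n-2)^2/4)$ collide at the single value $-\tfrac{n-2}{2}$, so the expansion theorem permits a genuine $\log r$ factor at this weight; this matches precisely the content of Definition~\ref{def:weak-order-of-a-conifold} and yields the ``weakly of order $\tfrac{n-2}{2}$'' statement of~(i). The main obstacle I anticipate is the careful execution of the gauge reduction and of the bootstrap: one has to realize Bianchi gauge while preserving the initial decay, check that the diffeomorphism stripping off Lie-derivative terms is compatible with the asymptotic chart, and apply the polyhomogeneous expansion theorem to $\ol\Delta_L$ restricted to the Bianchi-gauged subbundle while correctly tracking the multiplicities that produce the logarithmic terms at coincident indicial roots.
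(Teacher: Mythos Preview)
Your outline is essentially correct and would yield the theorem, but it follows a genuinely different route from the paper. You work \emph{locally on each end}: first impose linearized Bianchi gauge by solving an elliptic equation for a vector field on the end, then invoke a polyhomogeneous expansion for $\ol\Delta_L$, then strip off Lie-derivative leading terms by explicitly modifying the asymptotic chart, and iterate. This is the classical strategy of \cite{BKN89,CT94}. The paper instead builds a \emph{global} slice theorem (Theorem~\ref{slice}) for the Bianchi gauge on the whole conifold: given $g$, it produces a diffeomorphism $\psi_R$ putting an auxiliary metric $g_R$ (equal to the cone metric far out and to $g$ on a large compact set) into the slice $\mathcal{H}^k_\beta=\{g+h:h\in Z^k_\beta\}$, where $Z^k_\beta$ is a fixed complement of $\delta^*(\kernel_{H^{k+1}_{\beta+1}}(\Delta_1))$ inside $\kernel_{H^k_\beta}(B)$. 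The new chart is then $\varphi=\varphi_0\circ\psi_R$. The point is that membership of $h_0=(\psi_R)^*(g_R+h_R)-g$ in $Z^k_\beta$, combined with the direct-sum decomposition \eqref{eq:gauge_decomp2} at successively smaller weights, \emph{forces} the Lie-derivative part of the leading term to vanish without any further chart modification; see the end of the proof of Theorem~\ref{thm:AC_chart}. Thus the paper never has to alternate gauge-fixing with diffeomorphism corrections as you do.

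What each approach buys: yours is more elementary and stays close to standard Fuchsian/b-calculus machinery, but you must carefully manage the interaction between re-gauging and chart modification at each step of the bootstrap (after pulling back by $\exp(X)$ the Bianchi condition is only satisfied to higher order, so you must re-gauge before the next expansion), and you treat ends one at a time. The paper's approach requires proving the slice theorem (Lemmas~\ref{lem:solv_criterion}--\ref{lem:complement_gauge} and Theorem~\ref{slice}), but once this is in place the removal of Lie-derivative terms is automatic and all ends are handled simultaneously by a single diffeomorphism of $M$. Your anticipated obstacle---checking that the diffeomorphism stripping off Lie-derivative terms is compatible with the chart and the gauge---is exactly the technical cost your route incurs and the paper's global slice avoids.
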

\begin{rem}\label{rem:decay}
	We do not claim here that the largest possible order of each end is exactly $\xi_{\pm}(\wh{M}_i,\wh{g}_i)$. For some conifolds, the order could be a~priori larger. However one would need to study this with methods adjusted to these specific situations and can not be treated in this general framework.
	In this paper, we will use the adjective \enquote{optimal} in this sense.
\end{rem}
The construction of the asymptotic chart is based on a global slice theorem for the action of the diffeomorphism group on the space of metrics. We impose the \emph{Bianchi gauge},
which is made precise in Definition~\ref{def:Bianchi_gauge} below.

We will prove that given a conifold metric $g$ on a manifold $M$, there exists for any metric $\tilde{g}$ sufficiently close to $g$ in a suitable topology (with respect to weighted Sobolev spaces) a diffeomorphism $\psi$ on $M$ such that up to an arbitrarily small open subset $U$ (which we are free to choose), $g$ is in Bianchi gauge with respect to $\psi^*\tilde{g}$.
Given a Ricci-flat conifold $(M,g)$, we may choose now $\tilde{g}$ such that it agrees with $g$ in the core of $M$ and (with respect to given asymptotic charts $\varphi_i$) agrees with the exact cone metrics $\ol{g}_i$ at the ends. With the help of Theorem~\ref{mainthm:indicial_roots_LL}, we then compute the order of $g-\psi^*\tilde{g}$.  As a consequence, the new asymptotic charts $\varphi_i\circ\psi$ will give us the desired orders.

In contrast to \cite{BKN89,CT94} who construct the gauge locally at the (single) end, our gauge is constructed \emph{globally} on all of the manifold and gives us the \enquote{optimal} asymptotic charts for all ends at once.
\begin{rem}\label{rem:small_rates}
	Intuitively, one may think of an asymptotic order $\tau$ at infinity to be \enquote{small} if $\tau<n-2$, since $n-2$ is the decay rate of the fundamental solution of the Laplace equation. By Theorem~\ref{mainthm:conifold_rate}, an  AC end has small order if $E_-\cap(2-n,0)\neq\emptyset$. By definition of $E_-$,
	this is the case if some $\kappa_i$ are negative (this means that the link of the cone is \emph{unstable} with respect to the Einstein--Hilbert action, cf.\ \cite[Definition~4.63]{Bes08}).
\end{rem}
The notion of ADM mass of an asymptotically Euclidean manifold can be generalized to asymptotically conical manifolds.
\begin{defn}\label{def:ADM-mass}
	The ADM mass of an asymptotically conical manifold $(M,g)$ with a single end tangent to $((0,\infty)\times \wh{M},dr^2+r^2 \wh{g})$ is
	\begin{align*}
		m_{ADM}(M,g)=-\frac{1}{\mathrm{vol}(S^{n-1})}\lim_{R\to\infty}\int_{\SetDefine{R}{}\times \wh{M}}\langle \delta_{\ol{g}}(\varphi_*g)+d(\trace_{\ol{g}}\varphi_*g),\partial_r\rangle\dv_{R^2 \wh{g}},
	\end{align*}
	where $\varphi$ is an asymptotic chart.
\end{defn}
\noindent In view of Remark~\ref{rem:small_rates}, we obtain the following result:
\begin{cor}
	Let $(\ol{M},\ol{g})$ be a Ricci-flat cone over an Einstein manifold which is linearly stable with respect to the Einstein--Hilbert action. Then any Ricci-flat AC manifold with tangent cone $(\ol{M},\ol{g})$ has vanishing ADM mass.
\end{cor}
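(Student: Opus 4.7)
The strategy is to combine the optimal decay in the asymptotic chart given by Theorem~\ref{mainthm:conifold_rate}(i) with the fine information about indicial modes in Theorem~\ref{mainthm:indicial_roots_LL}(iii), and conclude that the integrand of the ADM mass formula is negligible at the rate that matters. I first rephrase the stability hypothesis: by Remark~\ref{rem:small_rates}, the link $(\wh M,\wh g)$ is linearly stable with respect to the Einstein--Hilbert action exactly when $\kappa_i\geq 0$ for every $i$. In this situation the second and third constituents in the definition of $E_-$ are empty, and every surviving element is of the form $(n-2)/2+\sqrt{(n-2)^2/4+\nu}$ with $\nu\in\{\kappa_i,\lambda_i\}\geq 0$; in particular $\xi_-(\wh M,\wh g)\geq n-2$. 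The cone is also automatically not resonance-dominated, so Theorem~\ref{mainthm:conifold_rate}(i) furnishes an asymptotic chart $\varphi$ in which $h:=\varphi_*g-\ol g$ is of order $\xi_-$.

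Next I analyse the mass integrand. Because $\ol g$ is parallel for its Levi-Civita connection and has constant trace, it drops out of the ADM expression, and the integrand equals $\langle\delta_{\ol g}h+d(\trace_{\ol g}h),\partial_r\rangle$; the natural volume element on $\{R\}\times\wh M$ grows as $R^{n-1}$. If $\xi_->n-2$, the pointwise bound $|\delta_{\ol g}h+d\trace_{\ol g}h|_{\ol g}=O(r^{-\xi_--1})$ gives a sphere integral of size $O(R^{n-2-\xi_-})\to 0$, and we are done. If $\xi_-=n-2$, the previous paragraph forces $\kappa_j=0$ for some $j$, and I claim that the leading radially homogeneous piece of $h$ in the Bianchi gauge is of the form $r^{2-n}H$ with $H$ a TT eigentensor of $\wh{\Delta}_E$ at eigenvalue zero. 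Indeed, the only way to realise $\xi=2-n$ inside the set $E$ of Theorem~\ref{mainthm:indicial_roots_LL}(iii) is via the branch $\xi_-(\kappa_j)$ with $\kappa_j=0$; the companion candidate $\xi_-(\lambda_0)=2-n$ would correspond to the pure conformal mode $r^{2-n}\ol g$, which fails the Bianchi condition and is therefore absent from $E$. Because $r^{2-n}H$ is TT on the cone as well, it is annihilated by $\delta_{\ol g}$ and $\trace_{\ol g}$, so subtracting this leading term from $h$ leaves a remainder of strictly faster decay to which the estimate of the generic case applies; the mass integral is thus $o(1)$ in both cases.

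The main obstacle lies in the borderline case: I must know not merely the order of~$h$ but that its leading radially homogeneous coefficient in the Bianchi-gauge chart actually lies in the indicial space described by Theorem~\ref{mainthm:indicial_roots_LL}(iii). This refinement should be read off from the construction behind Theorem~\ref{mainthm:conifold_rate}, whose slice-theorem argument produces the asymptotic chart together with an expansion in which the leading term of $h$ is in Bianchi gauge, is not a Lie derivative, and (modulo faster-decaying terms) satisfies the linearised Ricci-flat equation $\ol\Delta_L u=0$ on the cone. Once this is granted, the classification in Theorem~\ref{mainthm:indicial_roots_LL}(iii) restricts the borderline mode to the single TT branch identified above, and the computation of the mass integral reduces to the elementary decay estimate described in the previous paragraph.
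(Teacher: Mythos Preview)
Your proposal is correct and follows essentially the same route as the paper's (very brief) argument: split into the cases $\xi_->n-2$ (all $\kappa_i>0$, where the mass integrand decays fast enough) and $\xi_-=n-2$ (some $\kappa_j=0$, where the leading term of $\varphi_*g-\ol g$ is a TT-tensor and hence contributes nothing to $\delta_{\ol g}h+d\,\trace_{\ol g}h$). Your identification of the subtlety---that one needs the leading term in the optimal chart to lie in the indicial space of Theorem~\ref{mainthm:indicial_roots_LL}(iii)---is exactly what the paper invokes by pointing to the proof of Theorem~\ref{thm_spectrum_tangential_LL} together with the iterative construction behind Theorem~\ref{mainthm:conifold_rate}.
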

If all $\kappa_i$ are positive, then $\xi_->n-2$ and the mass clearly vanishes. If $\kappa_1=0$, then $\xi_-=n-2$, but the leading term of the asymptotic expansion of $\varphi_*g-\ol{g}$ is a TT-tensor, see the proof of Theorem~\ref{thm_spectrum_tangential_LL}. Therefore, the mass vanishes in this case, too.

\begin{exmp}
	In \cite{CH13}, it is shown that the AC Stenzel metric on $T^*S^m$ has order $2\frac{m}{m-1}$ and this is the optimal order. This shows that $2\frac{m}{m-1}\in E_-$. 
	Therefore, $\frac{2m}{m-1}=-\xi_+(\kappa_{i_0})$ for some $i_0\in \N$ and we get $\kappa_{i_0}<0$ if $m>2$. We deduce that for $m>2$, the link of the tangent cone of the Stenzel metric is unstable with respect to the Einstein--Hilbert action.
\end{exmp}	
\begin{exmp}
	Any 9-dimensional product Einstein manifold $\wh{M}_1\times \wh{M}_2$ with positive Ricci curvature satisfies $\kappa_1=-2(m-2)=-\frac{(m-2)^2}{4}$ (with $m=10$), see e.g.\ \cite[Section~4]{Kro15b}. Moreover, if the products are Einstein--Hilbert stable (for example, if both $\wh{M}_1$ and $\wh{M}_2$ are spheres), all other $\kappa_i$ are nonnegative. The 10-dimensional cones over such products are resonance-dominated. Ricci-flat AC manifolds tangent to such cones were constructed in \cite{Boh99} and these examples are by definition resonance-dominated as well.
\end{exmp}

In more specific geometric situations, where the tangent cones are quotients of Euclidean space, we derive the following from Theorem~\ref{mainthm:conifold_rate}
\begin{thm}\label{mainthm:orbifold_Rate}
	Every Ricci-flat orbifold is of order $2$.
\end{thm}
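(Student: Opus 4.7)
The plan is to apply Theorem~\ref{mainthm:conifold_rate}(ii) at each singular point of a Ricci-flat orbifold $(M^n,g)$. By definition of a Riemannian orbifold, each singular point admits a neighborhood modelled on $\R^n/\Gamma$ for some finite $\Gamma\subset O(n)$ acting freely on $S^{n-1}$, so each such end is conically singular with tangent cone the flat cone over the round spherical space form $(\wh M,\wh g)=(S^{n-1}/\Gamma,g_{\text{round}})$. Theorem~\ref{mainthm:conifold_rate}(ii) identifies the order as $\xi_+(\wh M,\wh g)$, so the goal reduces to showing that the end is of order $2$.

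I would first compute $E_+$ following the description in Theorem~\ref{mainthm:indicial_roots_LL}(iii), splitting it into Laplace contributions $\xi_+(\lambda_i)$ and Einstein-TT contributions $\xi_+(\kappa_i)$. For the Laplace side, the spectrum on the round sphere is $\lambda_k=k(k+n-2)$ with eigenspaces the degree-$k$ spherical harmonics. Since $\Gamma$ has no nonzero fixed vector, no linear function on $\R^n$ is $\Gamma$-invariant, so $\lambda_1=n-1$ does not descend to the quotient and the lowest surviving eigenvalue is $\geq\lambda_2=2n$. The identity $\frac{(n-2)^2}{4}+2n=\frac{(n+2)^2}{4}$ then gives $\xi_+(\lambda_i)\geq\xi_+(2n)=2$.

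The Einstein-TT side is more delicate: the flat tangent cone makes $\ol\Delta_L$ coincide with the rough Laplacian, forcing all indicial roots to be integers. The only positive integer strictly below $2$ is $\xi=1$, corresponding via $\kappa=\xi(\xi+n-2)$ to $\kappa=n-1$, and hence to a $\Gamma$-invariant degree-$1$ harmonic TT tensor on $\R^n$ which is not a Lie derivative. The main obstacle is that such invariants can in fact exist for some groups $\Gamma$, so the spectral bound alone is not sufficient in general.

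To circumvent this, I would invoke the smooth structure of the Riemannian orbifold directly. In orbifold normal coordinates at the singular point, the $\Gamma$-invariant lift $\tilde g$ to $\R^n$ admits the Taylor expansion $\tilde g_{ij}(x)=\delta_{ij}+\tfrac{1}{3}R_{ikjl}(0)x^kx^l+O(|x|^3)$ with matching bounds on derivatives, yielding $|\ol\nabla^k(\tilde g-g_{\text{flat}})|_{g_{\text{flat}}}=O(|x|^{2-k})$ for every $k\in\N$. These normal coordinates already realise an asymptotic chart of order $2$, and so the conically singular end is of order $2$ regardless of the precise value of the spectral quantity $\xi_+$, finishing the proof.
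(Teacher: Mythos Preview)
Your final argument via normal coordinates is correct and gives a valid proof of Theorem~\ref{mainthm:orbifold_Rate}: the orbifold smooth structure furnishes, at each singular point, a $\Gamma$-equivariant lift to a ball in $\R^n$ carrying a genuinely smooth metric, and the Taylor expansion of $\tilde g$ in geodesic normal coordinates at the origin yields $|\ol\nabla^k(\tilde g-\ol g)|_{\ol g}=O(r^{2-k})$ for all $k$, realizing a CS asymptotic chart of order~$2$.

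This route differs from the paper's. The paper applies Theorem~\ref{mainthm:conifold_rate}(ii) and computes $\xi_+(\wh M,\wh g)\ge 2$ directly from spectral data: by Boucetta's calculation one has
\[
\spectrum\bigl(\wh\Delta_E|_{TT(S^{n-1}/\Gamma)}\bigr)\subset \spectrum\bigl(\wh\Delta_E|_{TT(S^{n-1})}\bigr)=\{(i+1)(i+n-1):i\in\N\},
\]
so $\kappa_i\ge \kappa_1=2n$ and hence $\xi_+(\kappa_i)\ge\xi_+(2n)=2$. Your concern that a $\Gamma$-invariant TT eigentensor with $\kappa=n-1$ (i.e.\ $\xi_+=1$) might obstruct the argument is therefore unfounded: such an eigenvalue does not occur on the sphere at all, independently of $\Gamma$, so the spectral approach is in fact sufficient and there was no genuine obstacle to overcome.

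Your argument is more elementary and in fact does not use Ricci-flatness; it exploits the orbifold smooth structure itself, which already encodes smoothness of the local lift. The paper's spectral route, by contrast, treats the orbifold and ALE cases (Theorems~\ref{mainthm:orbifold_Rate} and~\ref{mainthm:ALE_Rate}) uniformly: the same Boucetta estimate also gives $-\xi_-(\kappa_i)\ge n$ at the AC end, and no normal-coordinate shortcut is available there since there is no smooth extension across infinity.
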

\begin{thm}\label{mainthm:ALE_Rate}
	Every $n$-dimensional Ricci-flat ALE manifold is ALE of order $n$.
\end{thm}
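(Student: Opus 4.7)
The plan is to apply Theorem~\ref{mainthm:conifold_rate}(i) to the single asymptotically conical end of $M$ and to show that the bound $\xi_-(\wh{M},\wh{g})$ it produces is at least $n$. The tangent cone is the flat cone $(\R^n/\Gamma,\ol{g})$ for a finite subgroup $\Gamma\subset O(n)$ acting freely on $S^{n-1}$, and the link is $(\wh{M},\wh{g})=(S^{n-1}/\Gamma,\wh{g})$ with $\wh{g}$ the quotient of the round metric, which is Einstein with $\wh{\ric}=(n-2)\wh{g}$.

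First I would verify that the flat cone is not resonance-dominated. On the round sphere (and hence on any quotient $S^{n-1}/\Gamma$), the Einstein operator on TT tensors has strictly positive spectrum: the non-Lie-derivative TT solutions of $\ol{\Delta}_L h=0$ on the flat cone are precisely the harmonic, trace- and divergence-free symmetric polynomial $2$-tensors on $\R^n$ of homogeneity $l\geq 2$, and the identity $\xi_+(\kappa)=l$ together with \eqref{eq:indicial_roots} forces $\kappa_i=l(l+n-2)\geq 2n$. Thus $\{\kappa_i\}\cap[-(n-2)^2/4,0)=\emptyset$, which is distinct from $\{-(n-2)^2/4\}$, so the cone is not resonance-dominated and Theorem~\ref{mainthm:conifold_rate}(i) yields that the end is of order $\xi_-(\wh{M},\wh{g})=\min E_-$.

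The next step is to compute $E_-$ explicitly. As every $\kappa_i$ is positive, the second and third terms in the definition of $E_-$ drop out, leaving $E_-=\{-\xi_-(\kappa_i),\,-\xi_-(\lambda_i)\}$. The spectra on $S^{n-1}/\Gamma$ are the $\Gamma$-invariant parts of those on round $S^{n-1}$, so every eigenvalue in sight has the form $l(l+n-2)$ for some admissible integer $l\geq 1$; plugging into \eqref{eq:indicial_roots} one obtains the clean formula $-\xi_-(l(l+n-2))=l+n-2$.

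The decisive step, and the reason the order is $n$ and not smaller, is that $\Gamma$ acts freely on $S^{n-1}$: no nonzero vector of $\R^n$ is $\Gamma$-fixed, and hence no $\Gamma$-invariant degree-$1$ spherical harmonic on functions exists. Combined with the lower bound $l\geq 2$ already noted for the TT spectrum, this forces $l\geq 2$ throughout, yielding $-\xi_-(\lambda_i),-\xi_-(\kappa_i)\geq n$ and hence $\min E_-\geq n$. The remaining subtlety is the degenerate case $\Gamma=\{1\}$ (asymptotically Euclidean), in which the freeness is vacuous and degree-$1$ harmonics reappear; here one invokes the positive mass theorem, which forces any Ricci-flat asymptotically Euclidean manifold to be isometric to flat $\R^n$ and renders the statement vacuous. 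The main point requiring care is therefore not the $\Gamma$-invariance computation itself but the identification, furnished by Theorem~\ref{mainthm:indicial_roots_LL}(iii), of the Einstein-operator spectrum on $\wh{M}$ with the non-Lie-derivative TT indicial set of $\ol{\Delta}_L$.
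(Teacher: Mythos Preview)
Your proof is correct and follows the same overall strategy as the paper: apply Theorem~\ref{mainthm:conifold_rate}(i) and bound $\xi_-(\wh{M},\wh{g})$ from below by $n$ using the spectral data on $S^{n-1}/\Gamma$. The differences lie in how the two spectral bounds are obtained.

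For the Einstein operator on TT tensors, the paper simply cites Boucetta's explicit computation $\spectrum(\wh{\Delta}_E|_{TT(S^{n-1})})=\{(i+1)(i+n-1):i\in\N\}$. Your argument is more intrinsic: since the cone is flat $\R^n$, any homogeneous element of $\ker(\ol{\Delta}_L)$ has Cartesian components that are homogeneous harmonic functions, forcing integer homogeneity and hence $\kappa_i=l(l+n-2)$ for some $l\in\N_0$; then Proposition~\ref{prop:Bianchi_indicial_values}(i) says the corresponding cone tensor is TT and \emph{not} a Lie derivative, while every TT harmonic tensor on $\R^n$ of degree $0$ or $1$ \emph{is} a Lie derivative (of a linear, respectively quadratic, vector field), so $l\ge 2$. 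This avoids the external reference at the cost of leaning more heavily on the paper's own Proposition~\ref{prop:Bianchi_indicial_values}.

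For the exclusion of $\lambda_1=n-1$, the paper invokes the equality case of the Lichnerowicz--Obata inequality, while you use the equivalent but more elementary observation that degree-$1$ spherical harmonics are restrictions of linear functionals $\langle a,\cdot\rangle$, and a free action of $\Gamma$ on $S^{n-1}$ fixes no nonzero $a$. Both arguments give the same conclusion for $\Gamma\neq\{1\}$.

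Finally, you explicitly dispose of the case $\Gamma=\{1\}$ via the positive mass theorem (Bishop--Gromov volume comparison would also suffice and is lighter), whereas the paper's proof tacitly restricts to $\Gamma\neq\{1\}$; this is a genuine, if minor, completeness gain on your side.
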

In \cite[Theorem~1.5]{BKN89}, it was shown that every ALE manifold is of order $n-1$ and it is of order $n$ if the manifold is K\"{a}hler or if $n=4$. In \cite[Theorem~5.103]{CT94}, it was already claimed that the assertion of Theorem~\ref{mainthm:ALE_Rate} holds. However, the proof contains a tiny gap which
seems to have been overlooked so far in the literature.

Let us explain this gap for completeness.
The central argument in \cite[p.\ 538]{CT94} is that any harmonic function $f$ on $\R^n/\Gamma$
with $f=\mathcal{O}(r^{1-n})$ satisfies $f=\mathcal{O}(r^{-n})$ if $\Gamma\neq\SetDefine{1}{}$. This is because the leading order term is exactly of the form $x\mapsto \langle x,a\rangle r^{-1}$ for some $a\in \R^n$ and this is not invariant under any nontrivial $\Gamma$.
For this reason, it was argued that the same decay rate should also hold for a harmonic TT-tensor. However, if $h$ is a harmonic tensor on $\R^n/\Gamma$, expanded in flat coordinates as
\begin{align*}
	h=h_{ij}dx^i\otimes dx^j,
\end{align*}
then the functions $h_{ij}$ are not necessarily $\Gamma$-invariant, so the above argument for functions cannot be applied. Consider the following example:  Equip $\R^4\cong\C^2$ with complex coordinates $(z_1,\bar{z}_1,z_2,\bar{z}_2)$, where $z_i=x_i+iy_i$ and $\ol{z}_i=x_i-iy_i$, $i=1,2$. Recall that
\begin{align*}
	\Delta=-4(\partial_{z_1}\circ\partial_{\bar{z}_1}+\partial_{z_2}\circ\partial_{\bar{z}_2}),
\end{align*}
where $\partial_{z_i}=1/2(\partial_{x_i}-i\partial_{y_i})$ and $\partial_{\bar{z}_i}=1/2(\partial_{x_i}+i\partial_{y_i})$.
The function $f:z\mapsto (z_1)^3$ is harmonic and invariant under the multiplicative action of the group $\Z_3=\SetDefine{1,e^{\frac{2i\pi}{3}},e^{\frac{4i\pi}{3}}}{}$ and so is its real part, which is given by the function $g=\mathrm{Re}(f):(x_1,y_1,x_2,y_2)\mapsto(x_1)^3-4(x_1)(y_1)^2$.
The tensor $\nabla^2g$ is now harmonic and $\Gamma$-invariant and its component functions are linear. Therefore, $h=r^{-4}\nabla^2g$ is another harmonic $\Gamma$-invariant tensor which is of order $\mathcal{O}(r^{-3})$.

On the other hand, $h=r^{-4}\nabla^2g$ is not a TT-tensor and even its trace-free part is not TT. Thus it does not contradict our results which imply that any harmonic $\Gamma$-invariant TT-tensor decaying at infinity decays of order $n$. But as the example of the function $g$ shows, this cannot be directly concluded from the fact that linear functions are not $\Gamma$-invariant for any nontrivial $\Gamma$. 

\subsection{Structure of the paper}
In Section~\ref{sec:notation}, we introduce some notations, conventions and commutation formulas which we use throughout the paper. In Section~\ref{sec:tangential_operators}, Laplace type operators on cones are discussed in great detail. After collecting a few general statements, we first discuss the connection Laplacian on 1-forms. Building upon this, we are able to unravel the complicated structure of the Lichnerowicz Laplacian on cones and prove Theorems~\ref{mainthm:indicial_roots_LL} and~\ref{thm_linear_stability}. 
Section~\ref{sec:coordinates} is devoted to the construction of optimal coordinates on conifolds. After an introduction into weighted function spaces and a discussion of decay of Ricci-flat metrics in Bianchi gauge on cones, we prove a slice theorem for the Bianchi gauge on conifolds. These decay results and the slice theorems are then used to prove Theorem~\ref{mainthm:conifold_rate} before we conclude with an application in the orbifold and ALE cases to prove Theorem~\ref{mainthm:orbifold_Rate} and Theorem~\ref{mainthm:ALE_Rate}.

\subsection*{Acknowledgments}
The authors want to thank Daniel Grieser, Uwe Semmelmann and Boris Vertman for helpful discussions. The work of the first author is supported by the DFG through the priority program 2026 \emph{Geometry at Infinity}. The second author was partially funded by the DFG through the Research Training Group 1670 \emph{Mathematics Inspired by String Theory and Quantum Field Theory}, and the National Science Center (NCN), Poland under grant number OPUS 2021/41/B/ST9/00757.
A part of this paper is based on the PhD thesis of the second author.

\section{Notations, conventions and formulas}\label{sec:notation}
For a Riemannian manifold $(M,g)$ we define the 
Riemannian curvature tensor with the sign convention such that
\begin{align}
	R_{X,Y}Z=\nabla^2_{X,Y}Z-\nabla^2_{Y,X}Z,\qquad \text{ for all }X,Y,Z\in C^{\infty}(TM),
\end{align}
where $\nabla$ is the Levi-Civita connection of $g$.
The Ricci tensor is denoted by $\ric$ and the scalar curvature by $\scal$. The Laplace--Beltrami operator $\Delta:C^{\infty}(M)\to C^{\infty}(M)$ with the sign convention such that $\Delta f=-\trace{\nabla^2}f=-g^{ij}\nabla^2_{ij}f$. With the same sign convention, we define the connection Laplacians
\begin{align}
	\Delta_1=-\trace{\nabla^2}:C^{\infty}(T^*M)\to C^{\infty}(T^*M),\qquad
	\Delta_2=-\trace{\nabla^2}:C^{\infty}(S^2M)\to C^{\infty}(S^2M),
\end{align}
on 1-forms and symmetric $(0,2)$-tensors, respectively. To distinguish between these operators and the Laplace--Beltrami operator, we sometimes also use the notation $\Delta_0$ instead of $\Delta$.
 
The symmetric tensor product of $\omega,\eta\in C^{\infty}(T^*M)$ is defined as $\omega\odot\eta=\omega\otimes\eta+\eta\otimes\omega\in C^{\infty}(S^2M)$.
The divergences of $\omega\in C^{\infty}(T^*M)$ and $h\in C^{\infty}(S^2M)$ are defined with the sign convention such that
\begin{align}
	\delta\omega=-g^{ij}\nabla_i\omega_j\in C^{\infty}(M),\qquad \delta h_k=-g^{ij}\nabla_ih_{jk}\in C^{\infty}(T^*M),
\end{align}
respectively. The formal adjoint $\delta^*:C^{\infty}(T^*M)\to C^{\infty}(S^2M)$ of $\delta$ is given by
\begin{align}
	(\delta^*\omega)_{ij}=\frac{1}{2}(\nabla_i\omega_j+\nabla_j\omega_i).
\end{align}
Note that $\delta^*d f=\nabla^2f$ for any $f\in C^{\infty}(M)$ and that $\delta^*$ is related to the Lie derivative by $2\delta^*\omega=\mathcal{L}_{\omega^{\sharp}}g$, where $\omega^{\sharp}\in C^{\infty}(TM)$ is the dual vector field of $\omega$ with respect to $g$. 
Note moreover that $\trace\after \delta^* = -\delta$.
We also have trace-free versions of $\delta^*$ and $\nabla^2$, given by
\begin{align}
	\mathring{\delta}^*\omega&:=\delta^*\omega-\frac{1}{n}\trace(\delta^*\omega)g=\delta^*\omega+\frac{1}{n}\delta\omega\cdot g,\label{eq:trace-tree-codivergence}\\
	\mathring{\nabla}^2f&:=\mathring{\delta}^*df=\nabla^2f+\frac{1}{n}\Delta f\cdot g. \label{eq:trace-free-Hessian}
\end{align}
Furthermore, we introduce the Bianchi operator, given by
\begin{equation}\label{eq:Bianchi-operator}
	B:=\delta+\frac{1}{2}d\circ \trace: C^{\infty}(S^2M)\to C^{\infty}(T^*M).
\end{equation}
This notion comes from the fact that $B$ arises from linearizing the Bianchi gauge condition $V({g},\tilde{g})=0$, where the vector field $V({g},\tilde{g})$ depending on two Riemannian metrics $g,\tilde{g}$ is given in local coordinates by
\begin{align*}
	V({g},\tilde{g})^l:=g^{ij}(\Gamma(g)_{ij}^l-\Gamma(\tilde{g})_{ij}^l).
\end{align*}
More precisely, a short calculation shows
\begin{align}\label{eq:linearized_bianchi}
	B(h)=\ddtzero V(g+th,g)=-\ddtzero V(g,g+th).
\end{align}
The Lichnerowicz Laplacian $\Delta_L:C^{\infty}(S^2M)\to C^{\infty}(S^2M)$ is defined by 
\begin{align*}
	\Delta_Lh=\Delta_2h-\ric\circ h+h\circ \ric-2\mathring{R}h,
\end{align*}
where $(h\circ k)_{ij}=g^{mn}k_{im}h_{jm}$, and the full curvature term is 
$\mathring{R}h_{ij}=g^{km}g^{ln}R_{iklj}h_{mn}$, where $R_{iklj}=g(R_{\partial_i,\partial_k}\partial_l,\partial_j)$. 
The term $\mathring{R}\in\End(S^2M)$ is called the curvature potential. 
Up to a gauge term, $1/2\cdot\Delta_L$ is the linearization of the Ricci tensor. More precisely,
we have
\begin{align}\label{eq:linearized_Ricci}
	\ddtzero\ric_{g+th}=\frac{1}{2}\Delta_Lh-\delta^*(Bh)=\frac{1}{2}(\Delta_Lh-\mathcal{L}_{(Bh)^{\sharp}}g),
\end{align}
see \cite[Theorem~1.174]{Bes08}. 
The Einstein operator $\Delta_E:C^{\infty}(S^2M)\to C^{\infty}(S^2M)$ is defined by 
\begin{align}\label{eq:Einstein-operator}
	\Delta_Eh=\Delta_2h-2\mathring{R}h.
\end{align}
Note that the two operators are related by $\Delta_L=\Delta_E+2\lambda$ whenever $\ric=\lambda\cdot g$. In this case, we additionally have
\begin{align*}
	\ddtzero \left[\ric_{g+th}-\lambda(g+th)\right]=\frac{1}{2}\Delta_Eh-\delta^*(Bh)=\frac{1}{2}(\Delta_Eh-\mathcal{L}_{(Bh)^{\sharp}}g),
\end{align*}
which is easily seen from \eqref{eq:linearized_Ricci}.
If $(M^n,g)$ is Einstein with $\ric_g=\lambda\cdot g$, we furthermore have a variety of commutation identities involving these differential operators: For $v\in C^{\infty}(M)$, $\omega\in C^{\infty}(T^*M)$ and $h\in C^{\infty}(S^2M)$, we have
\begin{align}\label{commutation}
	\Delta_1(dv)&=d(\Delta_0v-\lambda v),& \Delta_0(\delta \omega)&=\delta(\Delta_1\omega+\lambda\omega),\nonumber \\
	\Delta_E(\delta^*\omega)&=\delta^*(\Delta_1\omega-\lambda \omega),& \Delta_1(\delta h)&=\delta(\Delta_E h+\lambda h),\nonumber \\
	\Delta_E(\nabla^2v)&=\nabla^2(\Delta_0v-2\lambda v),& \Delta_0(\delta \delta h)&=\delta\delta(\Delta_Eh+2\lambda h),\nonumber \\
	\Delta_E(v\cdot g)&=(\Delta v-2\lambda v)g,& \Delta(\trace h)&=\trace(\Delta_Eh+2\lambda h).
\end{align}
The computations can be found in \cite{Lic61}, see also \cite[p.\ 8]{Kro15b}. Note that the third line follows trivially from the first and the second line and that the formulas on the right-hand side follow from the ones on the left-hand side by taking the formal adjoints. Also, note that from \eqref{commutation}, we can deduce in an obvious way other commutation formulas involving the operators $\Delta_L$, 	$\mathring{\delta}^*$, $\mathring{\nabla}^2$ and $B$. 

The spectrum of a differential operator $P$ is denoted by $\spectrum(P)$ and its strictly positive part is denoted by $\spectrum_+(P)=\spectrum(P)\cap(0,\infty)$. If $\lambda$ is an eigenvalue of the operator $P$, we denote the corresponding eigenspace by $E(P,\lambda)$. 

For a section $u$ of a Riemannian vector bundle with metric connection, we write $u=\mathcal{O}_k(r^{\alpha})$ if 
$|\nabla^lu|=\mathcal{O}(r^{\alpha-l})$ for all $l=0,\ldots k$
and $u=\mathcal{O}_{\infty}(r^{\alpha})$ if $u=\mathcal{O}_{k}(r^{\alpha})$ for all $k\in\N$.
Furthermore, we write $u=\mathcal{O}_k(r^{\alpha}\log r)$ if $|\nabla^lu|=\mathcal{O}(r^{\alpha-l}\log(r))$
for all $l=0,\ldots k$
and $u=\mathcal{O}_{\infty}(r^{\alpha}\log(r))$ if $u=\mathcal{O}_{k}(r^{\alpha}\log(r))$ for all $k\in\N$.

For a vector bundle $V$, we denote by $C^{\infty}(V)$, $C^{2,\alpha}(V)$ etc. the space of sections with respective regularity. If $P$ is a differential operator acting on sections of $V$, we denote by $\kernel_{C^{\infty}}(P)$, $\kernel_{C^{2,\alpha}}(P)$ etc. the elements in the kernel of $P$ with respective regularity, cf. \cite{Pac13}.
While $S^2M$ denotes the bundle of all symmetric (0,2)-tensors, the subset $S^2_+M\subset S^2M$ is the set of all  positive definite scalar products over all points of $M$. Although $S^2_+M$ does not form a vector bundle, we denote for notational convenience by $C^{\infty}(S^2_+M)$, $C^{2,\alpha}(S^2_+M)$ etc. the set of Riemannian metrics with the respective regularity.

There are several metrics of interest in this paper, and the interplay between them is crucial. 
For brevity, a system of diacritical marks has been implemented in the notation. 
Metrics, covariant derivatives, curvatures, Laplace type operators and bundles connected to the link, a Riemannian cone and a generic metric are denoted by hats ($\wh{g}$, $\wh{\nabla}$ etc.), overlines ($\ol{g}$, $\ol{\nabla}$ etc.) and tildes ($\tilde{g}$, $\tilde{\nabla}$ etc.), respectively. 
Moreover, geometric objects connected to the conifold metric under consideration carry no diacritical marks.
This convention is not applied to tangential operators.

\section{Ricci-flat cones and their tangential operators}\label{sec:tangential_operators}

Throughout this section, we fix a smooth closed Riemannian manifold $(\wh{M},\wh{g})$ of dimension $n-1$ (with $n>2$) and its $n$-dimensional cone, denoted by
\begin{align}
	(\ol{M},\ol{g})=(\R_+\times \wh{M},dr^2+r^2\wh{g}),
\end{align}
where $r$ is the canonical coordinate on $\R$.
We will use the canonical projections to pull back objects on $\wh{M}$ to objects on $\ol{M}$. For notational convenience, we will drop the explicit reference to the projections.
We denote the indices corresponding to coordinates on $\wh{M}$ by $i,j,k,\ldots$, and the index $r$ refers to the $r$-coordinate in the manifold $\ol{M}$.
Let us denote the indices corresponding to coordinates on $\ol{M}$ by $\alpha,\beta,\gamma$.
The Christoffel symbols on  $\ol{M}$ are related to the ones on $\wh{M}$ by
\begin{align}\label{christoffelA2}
	\ol{\Gamma}_{ij}^k&=\wh{\Gamma}_{ij}^k,\qquad \ol{\Gamma}_{ij}^{r}=-r\cdot g_{ij},\qquad
	\ol{\Gamma}_{ir}^j=\ol{\Gamma}_{ri}^j=\frac{1}{r}\delta_i^j,
\end{align}
while the other Christoffel symbols vanish.
Therefore. the curvature tensors of $\ol{g}$ and $\wh{g}$ are related by
\begin{align}
	\ol{R}_{ijkl}=r^2(\wh{R}_{ijkl}+(\wh{g}_{ik}\wh{g}_{jl}-\wh{g}_{il}\wh{g}_{jk})),
\end{align}
while the other components of $\ol{R}$ vanish. Consequently, the Ricci tensors are related by
\begin{align*}
	\ol{\ric}_{ij}=\wh{\ric}_{ij}-(n-2)\wh{g}_{ij},
\end{align*}
while the other components of $	\ol{\ric}_{ij}$ vanish. In particular, $(\ol{M},\ol{g})$ is Ricci-flat if and only if $(\wh{M},\wh{g})$ is Einstein with $\wh{\ric}=(n-2)\wh{g}$. 
Note that this latter condition can be achieved via a homothetic rescaling for
any positive Einstein manifold. 
In this situation, we call $(\ol{M},\ol{g})$ the Ricci-flat cone over $(\wh{M},\wh{g})$. We will assume from now on that $(\ol{M},\ol{g})$ is Ricci-flat.

\subsection{Conical and tangential operators}
Let $\ol{V}$ be a Riemannian vector bundle over $\ol{M}$ with metric connection $\ol{\nabla}$ and denote its restriction by $\wh{V}:=\ol{V}_{\SetDefine{1}{}\times \wh{M}}$. By identifying the bundle restrictions $\wh{V}|_{\SetDefine{r}{}\times \wh{M}}$ with each other via parallel transport along radial lines (i.e. curves of the form $\gamma(t):=(t,p)$ for fixed $p\in \wh{M}$), we get a natural identification
\begin{align}\label{eq:bundle_identification}
	C^{\infty}(\ol{V})\cong C^{\infty}(\R_{+},C^{\infty}(\wh{V})).
\end{align}
Via this parallel transport, we will sometimes consider sections in $\wh{V}$ as sections in $\ol{V}$, without mentioning the extension of the domain from $\wh{M}\cong \SetDefine{1}{}\times \wh{M}$ to $\ol{M}$ explicitly.
Let us assume that all the induced connections ${}^{r}\nabla$ on $\ol{V}|_{\SetDefine{r}{}\times \wh{M}}\cong \wh{V}$ do coincide. We denote this connection on $\wh{V}$ by $\wh{\ol{\nabla}}$.
\begin{defn}\label{def:conical-operator}
	A self-adjoint Laplace type operator $\ol{\Delta}_{\ol{V}}$ acting on $C^{\infty}(\ol{V})$ is called a \emph{conical operator} if it is with respect to \eqref{eq:bundle_identification} of the form
	\begin{align}\label{eq:conical_structure}
		\ol{\Delta}_{\ol{V}}=-\partial^2_{rr}-\frac{n-1}{r}\partial_r+\frac{1}{r^2}\Box_{\wh{V}},
	\end{align}
	where ${\Box}_{\wh{V}}$ is a self-adjoint Laplace type operator acting on $C^{\infty}({\wh{V}})$. We we call ${\Box}_{\wh{V}}$ the \emph{tangential operator} of~$\ol{\Delta}_{\ol{V}}$.
\end{defn}
\begin{lem}\label{lem:conical_connections}
	If $\ol{\Delta}_{\ol{V}}=\ol{\nabla}^*\ol{\nabla}$ is the connection Laplacian of $\ol{V}$, then it is a conical operator and its tangential operator is the connection Laplacian $\wh{\ol{\nabla}}^*\wh{\ol{\nabla}}=-\wh{g}^{ij}\left(\ol{\nabla}_{i}\ol{\nabla}_{j}-\ol{\nabla}_{\wh{\nabla}_{\partial_{i}}\partial_{j}} \right)$ of the induced connection $\wh{\ol{\nabla}}$ on $\wh{V}$ with respect to the metric $\wh{g}$.
\end{lem}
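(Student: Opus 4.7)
The plan is to compute $\ol\nabla^*\ol\nabla$ directly in a coordinate frame adapted to the product $\ol M=\R_+\times\wh M$, and to read off the three pieces of \eqref{eq:conical_structure} using the cone Christoffel symbols \eqref{christoffelA2} together with the parallel-transport identification \eqref{eq:bundle_identification}. The first preparatory observation is that under \eqref{eq:bundle_identification}, the operator $\ol\nabla_{\partial_r}$ becomes naive differentiation $\partial_r$ on the family $r\mapsto u(r)\in C^\infty(\wh V)$: this is exactly what the parallel-transport identification was designed to achieve, since a section whose $\ol\nabla_{\partial_r}$-derivative vanishes at each point is by definition parallel along the radial lines.

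Starting from $\ol\nabla^*\ol\nabla=-\ol g^{\alpha\beta}(\ol\nabla_\alpha\ol\nabla_\beta-\ol\nabla_{\ol\nabla_{\partial_\alpha}\partial_\beta})$ with $\ol g^{rr}=1$, $\ol g^{ri}=0$, $\ol g^{ij}=r^{-2}\wh g^{ij}$, and $\ol\Gamma_{rr}^\alpha=0$ from \eqref{christoffelA2}, the pure radial contribution collapses to $-\partial_r^2$ and the radial--tangential cross terms vanish. For the purely tangential terms, \eqref{christoffelA2} gives $\ol\nabla_{\partial_i}\partial_j=\wh\Gamma_{ij}^k\partial_k-r\wh g_{ij}\partial_r$, so the remaining sum naturally splits as
\[
-r^{-2}\wh g^{ij}\bigl(\ol\nabla_i\ol\nabla_j-\wh\Gamma_{ij}^k\ol\nabla_k\bigr)\;-\;r^{-2}\wh g^{ij}\cdot r\wh g_{ij}\,\ol\nabla_r,
\]
and the second summand contracts to $-\tfrac{n-1}{r}\partial_r$ using $\wh g^{ij}\wh g_{ij}=n-1$ together with the first observation.

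The core step, and the main obstacle, is to identify the first summand with $r^{-2}\wh{\ol\nabla}^*\wh{\ol\nabla}$. What must be checked is that, under \eqref{eq:bundle_identification}, the tangential covariant derivative $\ol\nabla_{\partial_i}$ acts on radially parallel sections as $\wh{\ol\nabla}_{\partial_i}$. This is precisely the content of the hypothesis that all induced connections ${}^r\nabla$ on the slices $\{r\}\times\wh M$ coincide with $\wh{\ol\nabla}$: at each fixed $r$, $\ol\nabla_{\partial_i}$ restricted to the slice equals ${}^r\nabla_{\partial_i}$, and the pointwise parallel-transport map $P_r^1\from\ol V|_{(r,p)}\to\wh V|_p$ intertwines these connections by construction. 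Iterating this matching, together with the fact that the $\wh\Gamma_{ij}^k\partial_k$-correction lies again in tangential directions, yields the identification of $\wh g^{ij}(\ol\nabla_i\ol\nabla_j-\wh\Gamma_{ij}^k\ol\nabla_k)$ with $\wh g^{ij}(\wh{\ol\nabla}_i\wh{\ol\nabla}_j-\wh\Gamma_{ij}^k\wh{\ol\nabla}_k)=\wh{\ol\nabla}^*\wh{\ol\nabla}$.

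Assembling the three pieces gives the claimed formula $\ol\nabla^*\ol\nabla=-\partial_r^2-\tfrac{n-1}{r}\partial_r+r^{-2}\wh{\ol\nabla}^*\wh{\ol\nabla}$, i.e.\ the form \eqref{eq:conical_structure} with $\Box_{\wh V}=\wh{\ol\nabla}^*\wh{\ol\nabla}$; self-adjointness of both $\ol\nabla^*\ol\nabla$ and $\wh{\ol\nabla}^*\wh{\ol\nabla}$ is automatic since each is the connection Laplacian of a metric connection on a Riemannian vector bundle.
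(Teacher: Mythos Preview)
Your proof is correct and follows essentially the same approach as the paper's: both expand $\ol\nabla^*\ol\nabla=-\ol g^{\alpha\beta}(\ol\nabla_\alpha\ol\nabla_\beta-\ol\nabla_{\ol\nabla_{\partial_\alpha}\partial_\beta})$ in the product frame, use the cone Christoffel symbols \eqref{christoffelA2} to split off the radial pieces, and invoke the identification $\ol\nabla_r\sim\partial_r$ under \eqref{eq:bundle_identification}. Your version is simply more explicit about why the remaining tangential block coincides with $\wh{\ol\nabla}^*\wh{\ol\nabla}$, which the paper leaves implicit.
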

\begin{proof}
	This simply follows from writing
	\begin{align*}
		\ol{\nabla}^*\ol{\nabla}=-\ol{g}^{{\alpha\beta}}(\ol{\nabla}_{\alpha}\ol{\nabla}_{\beta}+\ol{\nabla}_{\ol{\nabla}_{\partial_{\alpha}}\partial_{\beta}})
		=-\ol{\nabla}_{r}\ol{\nabla}_{r}
		-\frac{n-1}{r}\ol{\nabla}_{r}
		-r^{-2}\wh{g}^{{ij}}\left(\ol{\nabla}_{i}\ol{\nabla}_{j}-\ol{\nabla}_{\wh{\nabla}_{\partial_{i}}\partial_{j}} \right),
	\end{align*}
	and observing that $\ol{\nabla}_{r}\sim\partial_r$ via \eqref{eq:bundle_identification}.
\end{proof}
\begin{lem}\label{lem:conical_Schroedinger_operators}
	If $\ol{\Delta}_{\ol{V}}=\ol{\nabla}^*\ol{\nabla}+\ol{A}$ with a potential $\ol{A}\in C^{\infty}(\End(\ol{V}))$, then $\ol{\Delta}_{\ol{V}}$ is a conical operator if and only if $\ol{R}=r^{-2}\wh{A}$ for some potential $\wh{A}\in C^{\infty}(\End(\wh{V}))$.
\end{lem}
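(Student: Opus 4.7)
The plan is to reduce the statement directly to Lemma~\ref{lem:conical_connections}. By that lemma, the connection Laplacian $\ol\nabla^*\ol\nabla$ is already conical with tangential operator $\wh{\ol\nabla}^*\wh{\ol\nabla}$, so under the identification \eqref{eq:bundle_identification} I can write
\begin{align*}
\ol\Delta_{\ol V} \;=\; -\partial_{rr}^2 \;-\; \frac{n-1}{r}\partial_r \;+\; \frac{1}{r^2}\wh{\ol\nabla}^*\wh{\ol\nabla} \;+\; \ol A.
\end{align*}
Comparing this with the defining form \eqref{eq:conical_structure}, I see that $\ol \Delta_{\ol V}$ is conical if and only if there exists a self-adjoint Laplace type operator $\Box_{\wh V}$ on $C^\infty(\wh V)$ such that $\frac{1}{r^2}\wh{\ol\nabla}^*\wh{\ol\nabla}+\ol A=\frac{1}{r^2}\Box_{\wh V}$, i.e.\ $r^2\ol A = \Box_{\wh V}-\wh{\ol\nabla}^*\wh{\ol\nabla}$.

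For the forward direction, I would invoke the fact that two self-adjoint Laplace type operators on the same bundle over $(\wh M,\wh g)$ share the principal symbol $|\xi|^2\cdot\identity$, so their difference is of zeroth order, i.e.\ a section $\wh A\in C^\infty(\End(\wh V))$. Since both $\Box_{\wh V}$ and $\wh{\ol\nabla}^*\wh{\ol\nabla}$ act only in the link directions, this $\wh A$ is independent of $r$ under the bundle identification, which is precisely the content of $\ol A = r^{-2}\wh A$. For the converse, given $\ol A = r^{-2}\wh A$ with $\wh A\in C^\infty(\End(\wh V))$, I simply set $\Box_{\wh V}:=\wh{\ol\nabla}^*\wh{\ol\nabla}+\wh A$; this is manifestly self-adjoint and of Laplace type on $\wh V$, and the above display then expresses $\ol\Delta_{\ol V}$ in the conical form \eqref{eq:conical_structure}.

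There is no substantial obstacle: the only point worth highlighting is the symbolic calculation identifying the zeroth-order remainder as a genuine endomorphism on $\wh V$, which is immediate from the equality of principal symbols of the two Laplace type operators. Self-adjointness of $\wh A$ follows from self-adjointness of $\ol A$ and $\wh g$-self-adjointness of $\wh{\ol\nabla}^*\wh{\ol\nabla}$.
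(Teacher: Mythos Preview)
Your argument is correct and matches the paper's approach; the paper simply writes ``This is obvious,'' and what you have done is unpack that obviousness by invoking Lemma~\ref{lem:conical_connections} and comparing with \eqref{eq:conical_structure}.

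One small imprecision: in the forward direction you appeal to the fact that two self-adjoint Laplace type operators share principal symbol to conclude their difference is zeroth order. In general the difference of two Laplace type operators is only first order, not zeroth. The cleaner justification is already implicit in your setup: since $\ol A$ is by hypothesis a potential (zeroth order), the left side $r^2\ol A$ of your identity is a zeroth-order operator, and hence so is the right side $\Box_{\wh V}-\wh{\ol\nabla}^*\wh{\ol\nabla}$; this forces it to be an endomorphism $\wh A\in C^\infty(\End(\wh V))$, automatically $r$-independent. With that adjustment the argument is complete.
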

\begin{proof}
	This is obvious.
\end{proof}
\begin{exmp}
	The Laplace--Beltrami operator on $\ol{M}$ is a conical operator and its tangential operator is the Laplace--Beltrami operator on $\wh{M}$.
\end{exmp}
\begin{exmp}
	Let $\ol{\Delta}_1$ be the connection Laplacian on $T^*\ol{M}$, and let $x^i$ be coordinates on $\wh{M}$.
	Because $\ol{\nabla}_rdr=0$ and $\ol{\nabla}_r(r dx^i)=0$ the natural identification $T^*\ol{M}|_{\SetDefine{1}{}\times \wh{M}}\cong T^*\ol{M}|_{\SetDefine{r}{}\times \wh{M}}$ 
	is given by
	\begin{align*}
		\omega_rdr+\omega_i dx^i\sim 	\omega_rdr+r\omega_i dx^i.
	\end{align*}
	It is easy to see that this identification is compatible with the natural connection on $T^*\ol{M}$. Thus
	by Lemma~\ref{lem:conical_connections}, $\ol{\Delta}_1$ is a conical operator.
 \end{exmp}
\begin{exmp}
	Let $\ol{\Delta}_L$ be the Lichnerowicz Laplacian on $S^2\ol{M}$. The identification $S^2\ol{M}|_{\SetDefine{1}{}\times \wh{M}}\cong S^2\ol{M}|_{\SetDefine{r}{}\times \wh{M}}$ coming from parallel transport is given by
	\begin{align*}
		h_{rr}dr\otimes dr+ h_{ri}dr\odot dx^i +h_{ij} dx^i\otimes dx^j\sim 
		h_{rr}dr\otimes dr+r h_{ri}dr\odot dx^i +r^2h_{ij} dx^i\otimes dx^j.	
	\end{align*}
	Again, checking that this identification is compatible with the natural connection on $S^2\ol{M}$ is straightforward. Furthermore, the curvature potential $\mathring{\ol{R}}\in C^{\infty}(\End(S^2\ol{M}))$ scales correctly under that identification. Thus by Lemma~\ref{lem:conical_Schroedinger_operators}, $\ol{\Delta}_L$ is a conical operator. We denote its tangential operator by $\Box_L$.
\end{exmp}

\subsection{Harmonic sections of conical operators}
Recall the indicial root functions $\xi_\pm$ from \eqref{eq:indicial_roots} and let us introduce
\begin{align*}
	\eta&:\C\to\C,\qquad \eta(x)=x(x+n-2).
\end{align*}
By definition,
\begin{align*}
	\xi_{\pm}(x)\in \R	\qquad \forall x\in \left[-\frac{(n-2)^2}{4},\infty\right),
\end{align*}
and $\eta$ restricts to a function $\eta:\R\to\R$.
Furthermore, it is easy to see that
\begin{alignat*}{99}
	\eta(\xi_{\pm}(x))&=x, \qquad& \xi_+(x)\xi_-(x) &= -x &\qquad& \forall x\in \R,\\
	\xi_+(\eta(x))&=x, & \xi_-(\eta(x))&=x+2-n&& \forall x\in \R, \left[\frac{2-n}{2},\infty\right),\\
	\xi_-(\eta(x))&=x, & \xi_+(\eta(x))&=x+n-2&& \forall x\in \left(-\infty,\frac{2-n}{2}\right], \\
	\xi_+(x)+\xi_-(x)&= 2-n  \qquad& \xi_+(x) - \xi_-(x) &= \sqrt{(n-2)^2+4x}&& \forall x\in\R.
\end{alignat*}
The following Lemma is obvious from \eqref{eq:conical_structure}:
\begin{lem}
	Let $\ol{\Delta}_{\ol{V}}$ be a conical self-adjoint Laplace type operator with tangential operator $\Box_{\wh{V}}$. If $u\in C^{\infty}(\wh{V})$ satisfies $\Box_{\wh{V}} u=\nu u$ for some $\nu\in\R$, then 
	\begin{align*}
		\ol{\Delta}_{\ol{V}}(r^{\xi_{\pm}(\nu)}u)=0.
	\end{align*}
	If $\nu=-\frac{(n-2)^2}{4}$, then $\xi_{+}(\nu)=\xi_{-}(\nu)=-\frac{n-2}{2}$ and we also have
	\begin{align*}
		\ol{\Delta}_{\ol{V}}(r^{-\frac{n-2}{2}}\log(r)u)=0.
	\end{align*}
	Conversely, if $\ol{u}\in \mathrm{ker}(\ol{\Delta}_{\ol{V}})$ is of the form $\ol{u}=r^{\alpha}u$ or $\ol{u}=r^{\alpha}\log(r)u$, with $u\in C^{\infty}(E)$, then we have 
	\begin{align*}
		\Box_{\wh{V}}(u)= \eta(\alpha)u.
	\end{align*}
\end{lem}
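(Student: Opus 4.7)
The plan is to prove the lemma by direct computation, exploiting the fact that under the identification~\eqref{eq:bundle_identification} the $\partial_r$ derivatives act only on the scalar factor $r^\alpha$ (resp.\ $r^\alpha\log r$), while $\Box_{\wh V}$ acts only on the $\wh V$-factor. There is essentially no obstacle here: the argument is a two-line calculation, and the main task is to record it carefully and extract both implications of the biconditional.

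First, I would compute $\ol\Delta_{\ol V}(r^\alpha u)$ for $u\in C^\infty(\wh V)$ using \eqref{eq:conical_structure}. Since $\partial_r u=0$ under the bundle identification by parallel transport, the product rule gives
\begin{equation*}
	\ol\Delta_{\ol V}(r^\alpha u)=-\alpha(\alpha-1)r^{\alpha-2}u-(n-1)\alpha r^{\alpha-2}u+r^{\alpha-2}\Box_{\wh V}u=r^{\alpha-2}\bigl[\Box_{\wh V}u-\eta(\alpha)u\bigr],
\end{equation*}
using $\alpha(\alpha-1)+(n-1)\alpha=\alpha(\alpha+n-2)=\eta(\alpha)$. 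If $\Box_{\wh V}u=\nu u$, setting $\alpha=\xi_\pm(\nu)$ and using $\eta(\xi_\pm(\nu))=\nu$ yields the first claim.

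Next, I would handle the logarithmic case analogously. A straightforward application of the product rule gives
\begin{equation*}
	\ol\Delta_{\ol V}(r^\alpha\log(r)u)=r^{\alpha-2}\log(r)\bigl[\Box_{\wh V}u-\eta(\alpha)u\bigr]-r^{\alpha-2}\bigl(2\alpha+n-2\bigr)u.
\end{equation*}
At $\alpha=-\tfrac{n-2}{2}$ the coefficient $2\alpha+n-2$ vanishes identically and $\eta(\alpha)=-\tfrac{(n-2)^2}{4}$, so both terms vanish precisely when $\Box_{\wh V}u=-\tfrac{(n-2)^2}{4}u$; this gives the second claim.

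Finally, for the converse, assume $\ol u\in\kernel(\ol\Delta_{\ol V})$. If $\ol u=r^\alpha u$, the first display forces $r^{\alpha-2}[\Box_{\wh V}u-\eta(\alpha)u]=0$, so $\Box_{\wh V}u=\eta(\alpha)u$. If $\ol u=r^\alpha\log(r)u$, the two functions $r^{\alpha-2}\log r$ and $r^{\alpha-2}$ are linearly independent over any interval in $\R_+$, so the vanishing of the sum in the second display forces the log coefficient to vanish separately, again yielding $\Box_{\wh V}u=\eta(\alpha)u$. This completes the proof.
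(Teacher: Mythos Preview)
Your proof is correct. The paper does not give an explicit proof of this lemma, stating only that it ``is obvious from~\eqref{eq:conical_structure}''; your direct computation is exactly the intended argument, carried out carefully.
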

\begin{defn}
	An element $\alpha\in\SetDefine{\xi_{\pm}(\nu)}{\nu\in\spectrum(\ol{\Delta}_{\ol{V}})}$ is called an \emph{indicial root} of $\ol{\Delta}_{\ol{V}}$.
\end{defn}
Note that self-adjointness and ellipticity of $\ol{\Delta}_{\ol{V}}$ get inherited to $\Box_{\wh{V}}$ and therefore $\spectrum(\Box_{\wh{V}})$ consists solely of eigenvalues of finite multiplicity which tend to infinity.
In particular only a finite number of eigenvalues can be negative, a fact we will use later.
Let now $\SetDefine{\nu_i}{i\in\N}$ be the eigenvalues of $\Box_{\wh{V}}$, counted with multiplicity and $\SetDefine{u_i\in C^{\infty}(\wh{V})}{i\in \N}$ be an orthonormal basis of $L^2(\wh{V})$ such that $\Box_{\wh{V}}(u_i)=\nu_i\cdot u_i$.
Then every $\ol{u}\in \kernel(	\ol{\Delta}_{\ol{V}})$ can be expanded as
\begin{align}\label{eq:harmonic_expansion}
	\ol{u}=\sum_{\substack{i=1\\ \nu_i\neq -\frac{(n-2)^2}{4}}}^{\infty}\left(a_i r^{\xi_{+}(\nu_i)}+b_ir^{\xi_{-}(\nu_i)}\right)u_i+
	\sum_{\substack{j=1\\ \nu_j= -\frac{(n-2)^2}{4}}}^{\infty}\left(a_j r^{-\frac{n-2}{2}}+b_jr^{-\frac{n-2}{2}}\log(r)\right)u_j,
\end{align}
with constants $a_i,a_j,b_i,b_j$, which may have to be
complex in order to ensure that $\ol{u}$ is real. Note that for $\nu_i< -\frac{(n-2)^2}{4}$, we have
\begin{align*}
	\mathrm{Re}(a_i r^{\xi_{+}(\nu_i)}+b_ir^{\xi_{-}(\nu_i)})= r^{\mathrm{Re}(\xi_{+}(\nu_i))}(c_i\cos(\log(\mathrm{Im}\xi_{+}(\nu_i))+d_i \sin(\log(\mathrm{Im}\xi_{+}(\nu_i))))
\end{align*}
for some constants $c_i,d_i\in\R$.
Now, let 
\begin{align*}
	\xi^{\wh{V}}_+:=\min\,\SetDefine{\mathrm{Re}(\xi_{\pm}(\nu_i))}{i\in\N}\cap (0,\infty),\qquad 	\xi^{\wh{V}}_-:=\min\,\SetDefine{-\mathrm{Re}(\xi_{\pm}(\nu_i))}{i\in\N}\cap (0,\infty).
\end{align*}
A straightforward consequence for the boundary behavior of harmonic sections is the following:
\begin{cor}\label{cor:decay_harmonic_sections}
	Let $\ol{u}\in \mathrm{ker}(\ol{\Delta}_{\ol{V}})$, not necessarily defined on all of $\ol{M}$.
	\begin{itemize}
		\item[(i)] If $\ol{u}$ is defined on $(0,\epsilon)\times \wh{M}$ and $|\ol{u}|\to 0$ as $r\to 0$, then $\ol{u}=\O_{\infty}(r^{\xi^{\wh{V}}_+})$ as $r\to0$.
		\item[(ii)] If $\ol{u}$ is defined on $(R,\infty)\times \wh{M}$ and $|\ol{u}|\to 0$ as $r\to\infty $, then 
		we divide into two subcases:
		\begin{itemize}
			\item[(iia)] If $\NotResonanceDominated{\Box_{\wh{V}}}$, then $\ol{u}=\O_{\infty}(r^{-\xi^{\wh{V}}_-})$  as $r\to\infty $.
			\item[(iib)] If $\ResonanceDominated{\Box_{\wh{V}}}$, then $\ol{u}=\O_{\infty}(r^{-\frac{n-2}{2}}\log(r))$  as $r\to\infty $.
		\end{itemize}
	\end{itemize}
\end{cor}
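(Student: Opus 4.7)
The approach is to decompose $\ol{u}$ against the orthonormal eigenbasis $\SetDefine{u_i}{i\in\N}$ of the tangential operator $\Box_{\wh{V}}$ and reduce $\ol{\Delta}_{\ol{V}}\ol{u}=0$ to a family of Euler-type ODEs. Expanding $\ol{u}(r,\cdot)=\sum_i a_i(r)\,u_i$ as an $L^2(\wh{V})$-valued Fourier series on the subinterval of $\R_+$ on which $\ol{u}$ is defined, the conical form \eqref{eq:conical_structure} decouples the equation into
\begin{align*}
	a_i''(r) + \frac{n-1}{r}\,a_i'(r) - \frac{\nu_i}{r^2}\,a_i(r) = 0,
\end{align*}
whose indicial equation $\xi^2 + (n-2)\xi - \nu_i = 0$ has roots $\xi_\pm(\nu_i)$. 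Hence $a_i(r) = \alpha_i r^{\xi_+(\nu_i)} + \beta_i r^{\xi_-(\nu_i)}$, with the resonant substitution $a_i(r) = \alpha_i r^{-(n-2)/2} + \beta_i r^{-(n-2)/2}\log r$ when $\nu_i=-(n-2)^2/4$. This recovers precisely the expansion \eqref{eq:harmonic_expansion}.

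For part (i), the hypothesis $|\ol{u}(r,\cdot)|\to 0$ as $r\to 0$, combined with Parseval's identity, forces $a_i(r)\to 0$ for every $i$. Any non-zero contribution with $\mathrm{Re}(\xi)\leq 0$ (including the resonant logarithm $r^{-(n-2)/2}\log r$) has a modulus that does not tend to $0$ as $r\to 0$, hence only exponents with $\mathrm{Re}(\xi)>0$ can survive. By definition of $\xi^{\wh{V}}_+$, every such real part is bounded below by $\xi^{\wh{V}}_+$, yielding $|\ol{u}|=\O(r^{\xi^{\wh{V}}_+})$. Part (ii) is the dual statement: decay at infinity filters out exponents with $\mathrm{Re}(\xi)\geq 0$, and the leading term is the surviving one with largest remaining real part. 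In Case (iia), the not-resonance-dominated hypothesis means either $-(n-2)^2/4\notin\spectrum(\Box_{\wh{V}})$ (so no log term is present) or some $\nu_i\in(-(n-2)^2/4,0)$ exists (producing an exponent $-\xi_+(\nu_i)\in(0,(n-2)/2)$ that strictly dominates the resonant log); in either case the leading decay is $r^{-\xi^{\wh{V}}_-}$. In Case (iib), no $\nu_i$ lies in $(-(n-2)^2/4,0)$, so $\xi^{\wh{V}}_- = (n-2)/2$, and the resonant log term at $\nu_i=-(n-2)^2/4$ dominates all other contributions, giving $r^{-(n-2)/2}\log r$.

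The $\O_\infty$ estimates on all covariant derivatives are obtained by interior elliptic regularity. Restricting $\ol{u}$ to a dyadic shell $(r/2,2r)\times \wh{M}$ and radially rescaling by $1/r$ produces a unit-sized cylinder of bounded geometry on which $\ol{\Delta}_{\ol{V}}$ is uniformly elliptic, so Schauder estimates promote the uniform $C^0$ bound into $C^k$ bounds of the correct scale $r^{\xi-k}$ (with the extra $\log r$ factor in the resonant case). The main delicate point I anticipate is the termwise comparison: one must justify that the formal series \eqref{eq:harmonic_expansion} converges in a sense strong enough (say, uniformly with all derivatives on any compact subinterval in $r$) to permit reading off the asymptotic behavior from the slowest-decaying surviving summand. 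This follows from smoothness of $\ol{u}$ combined with discreteness and Weyl-type growth of $\spectrum(\Box_{\wh{V}})$, but requires care since infinitely many modes must be controlled simultaneously.
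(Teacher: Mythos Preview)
Your proposal is correct and follows essentially the same approach as the paper, which in fact does not give an explicit proof but simply states the corollary as ``a straightforward consequence'' of the expansion \eqref{eq:harmonic_expansion}. You have supplied the details the paper omits --- the mode-by-mode ODE analysis, the filtering of exponents by the decay hypothesis, the case split for resonance, and the rescaling argument for the $\O_\infty$ estimates --- and your cautionary remark about justifying the termwise comparison via smoothness and Weyl asymptotics is well placed, as this is exactly the kind of point the paper leaves implicit.
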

\begin{figure}[tbh!]
	\centering
	\includegraphics{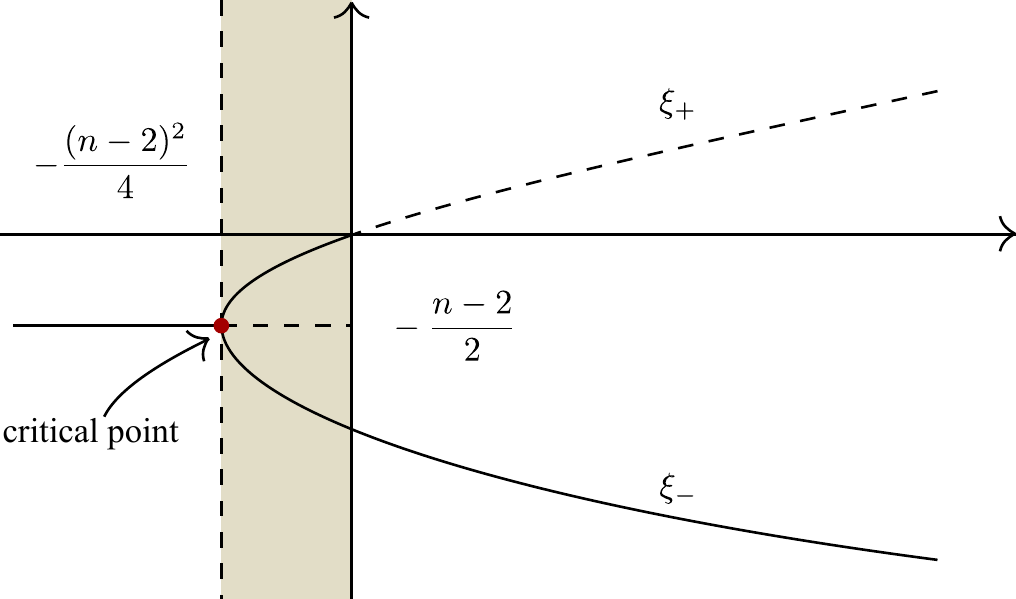}
	\caption[Decay rate of decaying harmonic fields]{The decay rate of decaying harmonic fields at infinity (cf. Corollary~\ref{cor:decay_harmonic_sections}). The figure shows the real part of the indicial roots $\xi_\pm$ corresponding to eigenvalues, cf. \eqref{eq:indicial_roots}. Note that the two branches $\xi_\pm$ coincide at the critical point $-\frac{(n-2)^2}{4}$ and here a logarithmic factor arises. However, if there is another eigenvalue in the shaded region then a slower decay rate dominates the logarithmic one at infinity.}
	\label{fig:generic-decay}
\end{figure}

\subsection{The tangential operator of the connection Laplacian \texorpdfstring{$\ol{\Delta}_1$}{}}\label{subsec:tang_op_forms}
We define
\begin{align}
	D(\wh{M}):=\SetDefine{\omega\in C^{\infty}(T^*\wh{M})}{\wh{\delta}\omega=0}.
\end{align}
As it is well known from the Hodge decomposition, we have the $L^2$-orthogonal splitting
\begin{align}\label{eq:decomp_1forms}
	C^{\infty}(T^*\wh{M})=d(C^{\infty}(\wh{M}))\oplus D(\wh{M}),
\end{align}
since $\wh{M}$ is compact.
Due to the commutation rules \eqref{commutation},
this splitting is preserved by $\wh{\Delta}_1$ and we have
\begin{align}
	\spectrum(\wh{\Delta}_1)=\spectrum(\wh{\Delta}_0-(n-2))\cup\spectrum(\wh{\Delta}_1|_{D(\wh{M})}).
\end{align}
Let $0=\lambda_0<\lambda_1<\lambda_2\ldots$ be the eigenvalues of $\wh{\Delta}$  and
let $\mu_1<\mu_2<\mu_3\ldots$ be the eigenvalues of $\wh{\Delta}_1|_{D(\wh{M})}$. 
A standard calculation shows that $\wh{\Delta}_1=2\wh{\delta}\circ\wh{\delta}^*+(n-2)$ holds on $D(\wh{M})$. In particular, $\mu_i\geq (n-2)$ for all $i\in\N$ and $\wh{\Delta}_1\omega=(n-2)\omega$ holds if and only if $\wh{\delta}^*\omega=0$, i.e. if $\omega^{\sharp}$ is a Killing vector field.
To emphasize this relation to Killing vector fields, we always start to count the $\mu_i$ from $0$, whenever $(n-2)\in\spectrum(\wh{\Delta}_1|_{D(\wh{M})})$, i.e. we set $\mu_0=(n-2)$.
\begin{lem}\label{hatdiv}
	Let $\omega\in D(\wh{M})$, ${\varphi}\in C^{\infty}(0,\infty)$ and $\ol{\omega}\in C^{\infty}(T^*\ol{M})$ be given by $\ol{\omega}=\varphi\cdot r\omega$. Then,
	\begin{align}\label{divfree1formsA1}
		\ol{\delta}\ol{\omega}=0,\qquad \ol{\omega}(\partial_r)=0
	\end{align}
	and
	\begin{align}\label{divfree1formsA2}
		\ol{\Delta}_1\ol{\omega}=r(-\partial^2_{rr}{\varphi}\cdot\omega-(n-1)r^{-1}\partial_r{\varphi}\cdot\omega+{\varphi}\cdot r^{-2}(\wh{\Delta}_1+1)\omega).
	\end{align}
\end{lem}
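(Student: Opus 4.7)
The plan is to verify all three claims by direct local-coordinate computation on $\ol{M}=(0,\infty)\times\wh{M}$, using the Christoffel symbols \eqref{christoffelA2} of $\ol{g}$. First, $\ol{\omega}(\partial_r)=0$ is immediate, since $\ol{\omega}=\varphi(r)\,r\,\omega_i(x)\,dx^i$ has no $dr$-component by construction (the $\omega_i$ depend only on $x\in\wh{M}$). For the divergence, I would write $\ol{\delta}\ol{\omega}=-\ol{g}^{\alpha\beta}(\partial_\alpha\ol{\omega}_\beta-\ol{\Gamma}^\gamma_{\alpha\beta}\ol{\omega}_\gamma)$, noting that the $\alpha=\beta=r$ term vanishes (since $\ol{\omega}_r=0$ and $\ol{\Gamma}^\gamma_{rr}=0$), the mixed $r$-$i$ terms vanish (since $\ol{g}^{ri}=0$), and the $\alpha=i,\beta=j$ contribution, using $\ol{g}^{ij}=r^{-2}\wh{g}^{ij}$, $\ol{\Gamma}^k_{ij}=\wh{\Gamma}^k_{ij}$ and $\ol{\Gamma}^r_{ij}=-r\wh{g}_{ij}$, reduces to $r^{-1}\varphi\cdot\wh{\delta}\omega$, which vanishes by hypothesis.

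For the Laplacian formula, the approach is a systematic componentwise computation of $\ol{\nabla}\ol{\omega}$ and $\ol{\nabla}^2\ol{\omega}$, followed by tracing with $\ol{g}^{\alpha\beta}$. The mixed Christoffel symbols $\ol{\Gamma}^j_{ir}=r^{-1}\delta^j_i$ and $\ol{\Gamma}^r_{ij}=-r\wh{g}_{ij}$ drive several cancellations, giving
\begin{align*}
(\ol{\nabla}_r\ol{\omega})_j=\varphi'r\,\omega_j,\qquad (\ol{\nabla}_i\ol{\omega})_r=-\varphi\,\omega_i,\qquad (\ol{\nabla}_i\ol{\omega})_j=\varphi r\,\wh{\nabla}_i\omega_j,
\end{align*}
with the remaining first-order component vanishing. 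In particular, the product rule $\partial_r(\varphi r\omega_j)=(\varphi'r+\varphi)\omega_j$ has its $\varphi\omega_j$ term cancelled by $\ol{\Gamma}^k_{rj}\cdot\varphi r\omega_k=\varphi\omega_j$, yielding $\varphi'r\omega_j$ rather than $(\varphi r)'\omega_j$.

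Iterating, the second derivatives produce $(\ol{\nabla}^2)_{rr,k}=\varphi''r\,\omega_k$ (the analogous cancellation at second order), and
\begin{align*}
(\ol{\nabla}^2)_{ij,k}=\varphi r\,\wh{\nabla}_i\wh{\nabla}_j\omega_k+r^2\varphi'\,\wh{g}_{ij}\omega_k-r\varphi\,\wh{g}_{ik}\omega_j,
\end{align*}
with the last two terms coming from interactions with $\ol{\Gamma}^r_{ij}$ and $\ol{\Gamma}^k_{ir}$, respectively. Contracting with $\ol{g}^{\alpha\beta}$, and using $\wh{g}^{ij}\wh{g}_{ij}=n-1$, $\wh{g}^{ij}\wh{\nabla}_i\wh{\nabla}_j\omega_k=-(\wh{\Delta}_1\omega)_k$ and $\wh{g}^{ij}\wh{g}_{ik}\omega_j=\omega_k$, the $r$-component of $\ol{\Delta}_1\ol{\omega}$ is proportional to $\wh{\delta}\omega$ and thus vanishes, while the $k$-component assembles into
\begin{align*}
(\ol{\Delta}_1\ol{\omega})_k=-\varphi''r\,\omega_k-(n-1)\varphi'\,\omega_k+r^{-1}\varphi\,(\wh{\Delta}_1+1)\omega_k.
\end{align*}
Factoring out $r$ yields \eqref{divfree1formsA2}; the $+1$ in $(\wh{\Delta}_1+1)$ is precisely the contribution of the $-r\varphi\wh{g}_{ik}\omega_j$ term traced against $\wh{g}^{ij}$.

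The main obstacle is bookkeeping the numerous Christoffel-symbol contributions and their cancellations; once the first-order identities above are in place, the rest is a straightforward trace. Note that Einstein-ness of $\wh{g}$ is not used in this lemma — only the conical structure \eqref{christoffelA2} — which is consistent with $\ol{\Delta}_1$ being a conical operator by Lemma~\ref{lem:conical_connections}.
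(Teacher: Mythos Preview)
Your proof is correct and follows essentially the same approach as the paper: a direct local-coordinate computation of $\ol{\nabla}\ol{\omega}$ and $\ol{\nabla}^2\ol{\omega}$ using the Christoffel symbols \eqref{christoffelA2}, followed by tracing with $\ol{g}^{\alpha\beta}$ and invoking $\wh{\delta}\omega=0$. The paper records the same first-derivative identities and second-derivative components (up to an immaterial index swap in the last term of $\ol{\nabla}^2_{ij}\ol{\omega}_k$, which traces to the same thing), and your additional remarks on the cancellations and on the fact that the Einstein condition is not used are accurate and helpful.
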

\begin{proof}
	If $\omega $ and $\ol{\omega}$ are as in the statement, \eqref{christoffelA2} implies
	\begin{align}
		\ol{\nabla}_i\ol{\omega}_{j}=\varphi r\wh{\nabla}_i\omega_j,\qquad \ol{\nabla}_r\ol{\omega}_j=\partial_r\varphi\cdot r\omega_j,\qquad 	\ol{\nabla}_i\ol{\omega}_{r}=-\varphi\cdot\omega_j,\qquad \ol{\nabla}_r\ol{\omega}_{r}=0,
	\end{align}
	and \eqref{divfree1formsA1} follows by taking the trace and the fact that $\ol{\omega}_r=0$. By applying the covariant derivative once again, we obtain from \eqref{christoffelA2} that
	\begin{alignat}{99}
		\ol{\nabla}_{ij}^2\ol{\omega}_k&=\varphi\cdot r\wh{\nabla}_{ij}^2\omega_k+r^2\wh{g}_{ij}\partial_r\varphi\cdot\omega_k-r\varphi \wh{g}_{jk}\omega_i,&\qquad&	\ol{\nabla}_{rr}^2\ol{\omega}_k=r\partial^2_{rr}\varphi\cdot\omega_k,\nonumber \\
		\ol{\nabla}_{ij}^2\ol{\omega}_r&=-\varphi(\wh{\nabla}_i\omega_j+\wh{\nabla}_j\omega_i)=-2\varphi\cdot(\wh{\delta}^*\omega)_{ij},&\qquad&
		\ol{\nabla}_{rr}^2\ol{\omega}_r=0
	\end{alignat}
	and \eqref{divfree1formsA2} follows by taking the trace and using that $\wh{\delta} \omega=0$.
\end{proof}
\begin{prop}\label{prop:spectrum_tang_1forms}
	The spectrum of the tangential operator ${\Box}_1$ is given by
	\begin{align*}
		\spectrum({\Box}_1)=\SetDefine{\mu_i+1}{i\in \N}\cup \SetDefine{\lambda^{(1)}_{i,\pm}}{i\in \N }\cup\SetDefine{\lambda^{(1)}_{0,-}}{},
	\end{align*}
	where
	\begin{align*}
		\lambda^{(1)}_{i,\pm}:=\eta(\xi_{\pm}(\lambda)-1).
	\end{align*}
	The corresponding eigenspaces are
	\begin{align*}
		E({\Box}_1,\mu_i+1)&=\SetDefine{\omega}{\omega \in E(\wh{\Delta}_1,\mu_i) },\\
		E({\Box}_1,\lambda^{(1)}_{i,\pm})&=\SetDefine{\xi_{\pm}(\lambda)vdr+dv}{v \in E(\wh{\Delta},\lambda_i) },\\
		E({\Box}_1,\lambda^{(1)}_{0,-}=n-1)&=\SetDefine{\alpha dr}{\alpha\in\R},
	\end{align*}
	where $i\in\N$.
\end{prop}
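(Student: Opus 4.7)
The plan is to decompose sections of $\wh{T^*\ol M}$ via the Hodge splitting on $\wh M$ and to diagonalize $\Box_1$ on each piece separately. In the parallel-transport trivialization described in the example preceding this subsection, a section corresponds to a pair $(f,\beta)$ with $f\in C^\infty(\wh M)$ carrying the $dr$-component and $\beta\in C^\infty(T^*\wh M)$ carrying the tangential component (rescaled by a factor of $r$ via $dx^i\mapsto r\,dx^i$). Applying the Hodge decomposition \eqref{eq:decomp_1forms} to the second factor splits the space of sections into three natural subspaces: purely radial $(f,0)$, exact-tangential $(0,dv)$, and coexact-tangential $(0,\beta)$ with $\beta\in D(\wh M)$. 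The goal is to exhibit eigenvectors of $\Box_1$ inside each piece and then to argue completeness.

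For the coexact piece, I would apply Lemma~\ref{hatdiv} with $\varphi\equiv 1$ to an eigenform $\beta\in D(\wh M)\cap E(\wh{\Delta}_1,\mu_i)$ and match the result against the conical normal form \eqref{eq:conical_structure}; this directly yields $\Box_1(0,\beta)=(0,(\wh{\Delta}_1+1)\beta)$ and hence the eigenvalues $\mu_i+1$ with the claimed eigenspaces. For the remaining radial/exact-tangential part, the key input is that $\bar v_\pm:=r^{\xi_\pm(\lambda_i)}v$ is harmonic on the Ricci-flat cone whenever $v\in E(\wh{\Delta},\lambda_i)$, by the indicial root lemma applied to the Laplace--Beltrami operator. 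The commutation rule $\ol{\Delta}_1\circ d=d\circ\ol{\Delta}_0$ from \eqref{commutation} then places
\[d\bar v_\pm=\xi_\pm(\lambda_i)\,r^{\xi_\pm(\lambda_i)-1}v\,dr+r^{\xi_\pm(\lambda_i)}\,dv\]
into $\kernel(\ol{\Delta}_1)$. In the parallel-transport trivialization this 1-form reads $r^{\xi_\pm(\lambda_i)-1}(\xi_\pm(\lambda_i)v,\,dv)$, and the converse direction of the indicial root lemma identifies $(\xi_\pm(\lambda_i)v,dv)$ as an eigenvector of $\Box_1$ with eigenvalue $\eta(\xi_\pm(\lambda_i)-1)=\lambda^{(1)}_{i,\pm}$.

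It remains to verify completeness. For each $\lambda_i>0$ the two eigenvectors $(\xi_\pm(\lambda_i)v,dv)$ are linearly independent because $\xi_+(\lambda_i)\neq\xi_-(\lambda_i)$, so their span equals $\{(cv,c'\,dv):c,c'\in\R\}$; letting $v$ range over a basis of $E(\wh{\Delta},\lambda_i)$ exhausts $E(\wh{\Delta},\lambda_i)\oplus d(E(\wh{\Delta},\lambda_i))$. For $\lambda_0=0$ one has $dv=0$, so only the branch $\xi_-(0)=-(n-2)$ contributes, yielding the purely radial eigenspace $\{\alpha\,dr:\alpha\in\R\}$ with eigenvalue $\eta(-(n-1))=n-1$. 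Collecting these across all $i$, together with the coexact forms, recovers the full Hodge decomposition $C^\infty(\wh M)\oplus d(C^\infty(\wh M))\oplus D(\wh M)$ of the space of sections, and the self-adjointness and ellipticity of $\Box_1$ then preclude any additional spectrum. The step I expect to require the most care is the bookkeeping of the parallel-transport identification $dx^i\leftrightarrow r\,dx^i$: the factor of $r$ must be tracked consistently so that the normalizations of Lemma~\ref{hatdiv} and of the conical form \eqref{eq:conical_structure} are compatible, and so that the translation between cone 1-forms and their parallel-transported representatives introduces the correct powers of $r$.
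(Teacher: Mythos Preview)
Your proposal is correct and follows essentially the same route as the paper: the coexact piece is handled via Lemma~\ref{hatdiv} (the paper phrases it as finding the harmonic forms $r^{\xi_\pm(\mu+1)}(r\omega)$ and reading off the tangential eigenvalue, but this is equivalent to your direct reading of $\Box_1$ from \eqref{divfree1formsA2}), and the radial/exact piece is handled by applying $d$ to the harmonic functions $r^{\xi_\pm(\lambda)}v$ via the commutation rule from \eqref{commutation}. Your completeness argument is slightly more explicit than the paper's---you spell out that $\xi_+(\lambda_i)\neq\xi_-(\lambda_i)$ forces linear independence of the two eigenvectors and hence spans the full $\{(cv,c'\,dv)\}$ block---whereas the paper simply notes that $\xi_\pm(\lambda)$ and $dv$ are nonzero for $\lambda>0$ and appeals to the $L^2$-span of the eigenspaces; both are fine.
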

\begin{rem}
	The indicial values of $\ol{\Delta}$ and $\ol{\Delta}_1$ coming from an eigenfunction $v\in E(\wh{\Delta},\lambda_i)$ are related as follows:
\begin{align*}
	\xi_+(\lambda_i)=\xi_+(\lambda^{(1)}_{i,+})+1=\xi_+(\lambda^{(1)}_{i,-})-1,\qquad 
	\xi_-(\lambda_i)=\xi_-(\lambda^{(1)}_{i,-})+1=\xi_-(\lambda^{(1)}_{i,+})-1.
\end{align*}	
\end{rem}
\begin{proof}[Proof of Proposition~\ref{prop:spectrum_tang_1forms}]
	Let $\omega\in D(\wh{M})$ be such that $\wh{\Delta}_1\omega=\mu\omega$. Then due to Lemma~\ref{hatdiv}, we have
	\begin{align*}
		r^{\xi_{\pm}(\mu+1)}(r\omega)\in\ker(\ol{\Delta}_1).
	\end{align*}	
	On the other hand, because $\ol{\Delta}_1$ is a conical operator, we have
	\begin{align*}
		\Box_1\omega=\eta(\xi_{\pm}(\mu+1))\omega=(\mu+1)\omega,
	\end{align*}
	which gives the first type of eigenvalues and eigenforms.
	Now, take $v\in C^{\infty}(\wh{M})$ with $\wh{\Delta} v=\lambda v$. Then,
	\begin{align*}
		r^{\xi_{\pm}(\lambda)}v\in \ker(\ol{\Delta}),
	\end{align*}
	and because $d\circ \ol{\Delta}=\ol{\Delta}_1\circ d$, we also get
	\begin{align*}
		d(r^{\xi_{\pm}(\lambda)}v)=r^{\xi_{\pm}(\lambda)-1}(\xi_{\pm}(\lambda)vdr+rdv)\in \ker(\ol{\Delta}_1)
	\end{align*}
	because $(\ol{M},\ol{g})$ is Ricci-flat. Since $\ol{\Delta}_1$ is a conical operator, we conclude that
	the section
	\begin{align*}
		\xi_{\pm}(\lambda)vdr+dv\in C^{\infty}(T^*\ol{M}|_{\SetDefine{1}{}\times \wh{M}})
	\end{align*}
	satisfies
	\begin{align*}
		\Box_1(\xi_{\pm}(\lambda)vdr+dv)=\lambda^{(1)}_{i,\pm}(\xi_{\pm}(\lambda)vdr+dv),\qquad \lambda^{(1)}_{i,\pm}:= \eta(\xi_{\pm}(\lambda)-1).
	\end{align*}
	For $\lambda>0$, observe that the reals $\xi_{\pm}(\lambda)$ and the section $dv$  are all nonvanishing. For $\lambda=\lambda_0=0$, $v\equiv c\in\R$ and $\xi_+(0)=0$, $\xi_-(0)=1-n$. Hence,
	\begin{align*}
		\xi_{+}(\lambda)vdr+dv=0,\qquad \xi_{-}(\lambda)vdr+dv=(1-n)cdr,\qquad \lambda_{0,-}=0.
	\end{align*}
	We have now constructed all eigenvalues and eigenspaces stated in the proposition. Since the $L^2$-span of the $E(\wh{\Delta},\lambda_i)$ is already all of $L^2(\wh{M})$ and the $L^2$-span of $E(\wh{\Delta}_1,\mu_i)$ is all of $L^2(D(\wh{M}))$, it follows with the help of \eqref{eq:decomp_1forms} that the $L^2$-span of the eigenspaces for $\Box_1$ we have constructed so far is already all of 
	\begin{align*}
		L^2(T^*\ol{M}|_{\SetDefine{1}{}\times \wh{M}})\cong L^2(\wh{M})\oplus L^2(T^*\wh{M}).
	\end{align*}
	This finishes the proof of the lemma.
\end{proof}

\subsection{The tangential operator of the Lichnerowicz Laplacian \texorpdfstring{$\ol{\Delta}_L$}{}}
Let us assume in this subsection that the dimension of $\wh{M}$ is $n-1\geq3$.
We have the $L^2$-orthogonal splitting
\begin{align}\label{eq:decomp_2tensors}
	C^{\infty}(S^2\wh{M})=C^{\infty}(\wh{M})\cdot \wh{g}\oplus \SetDefine{n\wh{\nabla}^2v+\wh{\Delta} v\cdot \wh{g}}{v\in C^{\infty}(\wh{M}) }\oplus \wh{\delta}^*(D(\wh{M}))\oplus TT(\wh{M}),
\end{align}
where $TT(\wh{M})=\SetDefine{h\in C^{\infty}(S^2\wh{M})}{\wh{\trace} h=0,\wh{\delta} h=0}$ denotes the space of transverse traceless tensors on $\wh{M}$.
The Einstein operator \eqref{eq:Einstein-operator} has a block diagonal form with respect to this decomposition \cite[p. 130]{Bes08} (see  also \cite[Section~2]{Kro17} for the refined version stated here). 
\begin{lem}\label{ttlemma}
	Let $h\in TT(\wh{M})$, $\varphi\in C^{\infty}((0,\infty))$
	and $\ol{h}\in C^{\infty}(S^2\ol{M})$ be defined by $\ol{h}=\varphi\cdot r^2h$. This tensor satisfies
	\begin{align}
		\ol{\trace}\ol{h}=0,\qquad \ol{\delta}\ol{h}=0,\qquad \ol{h}(\partial_r,.)=0
	\end{align}
	and
	\begin{align}\label{eq:TT_conical}
		\ol{\Delta}_L\ol{h}=r^2(-\partial^2_{rr}\varphi\cdot h-(n-1)\cdot r^{-1}\partial_r\varphi\cdot h+\varphi\cdot r^{-2}\wh{\Delta}_Eh).
	\end{align}
\end{lem}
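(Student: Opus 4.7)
The proof will closely mirror the structure of Lemma~\ref{hatdiv}, adapted from 1-forms to symmetric 2-tensors, with the added wrinkle that the Lichnerowicz Laplacian contains a curvature potential term.

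First I would dispense with the three algebraic identities by direct computation via the Christoffel symbols~\eqref{christoffelA2}. The trace identity is immediate from $\ol{g}^{ij}=r^{-2}\wh{g}^{ij}$, $\ol{g}^{rr}=1$ and $\wh{\trace}h=0$: indeed $\ol{\trace}\ol{h}=\varphi\cdot\wh{\trace}h=0$. The identity $\ol{h}(\partial_r,\cdot)=0$ holds by construction, since $\ol{h}$ has no components carrying an $r$-index. For the divergence, a short calculation using~\eqref{christoffelA2} gives
\[
\ol{\nabla}_i\ol{h}_{jk}=\varphi r^2\wh{\nabla}_ih_{jk},\qquad \ol{\nabla}_r\ol{h}_{ij}=r^2\varphi'h_{ij},\qquad \ol{\nabla}_i\ol{h}_{rj}=-\varphi r h_{ij},
\]
with the remaining mixed components vanishing. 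Taking the appropriate traces then yields $\ol{\delta}\ol{h}_k=\varphi(\wh{\delta}h)_k=0$ and $\ol{\delta}\ol{h}_r=r^{-1}\varphi\cdot\wh{\trace}h=0$, using the TT conditions on $h$.

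For the key identity~\eqref{eq:TT_conical}, I would exploit that $\ol{\Delta}_L$ is a conical operator, as observed in the Example following Lemma~\ref{lem:conical_Schroedinger_operators}. Under the identification \eqref{eq:bundle_identification} of that example, the section $\ol{h}=\varphi r^2 h$ corresponds at level $1$ to the radially-varying section $\varphi(r)\cdot h$. Applying the conical decomposition $\ol{\Delta}_L=-\partial_{rr}^2-(n-1)r^{-1}\partial_r+r^{-2}\Box_L$ and converting back to the level-$r$ representation of the tangential component (which multiplies by $r^2$) reduces the claim \eqref{eq:TT_conical} to the single identity
\[
\Box_L h=\wh{\Delta}_Eh\qquad\text{for all }h\in TT(\wh{M}).
\]

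To establish this, I would decompose $\ol{\Delta}_L=\ol{\nabla}^*\ol{\nabla}-2\mathring{\ol{R}}$ (the Ricci terms vanish by $\ol{\ric}=0$) and analyze each summand separately on the TT slice. The tangential operator of the connection Laplacian is computed using Lemma~\ref{lem:conical_connections}; unpacking it in terms of $\wh{\nabla}$ produces $\wh{\nabla}^*\wh{\nabla}h=\wh{\Delta}_2h$ plus algebraic correction terms coming from the off-diagonal Christoffel symbols $\ol{\Gamma}^r_{ij}=-r\wh{g}_{ij}$ and $\ol{\Gamma}^j_{ir}=r^{-1}\delta_i^j$. The curvature potential splits via
$\ol{R}_{ijkl}=r^2(\wh{R}_{ijkl}+\wh{g}_{ik}\wh{g}_{jl}-\wh{g}_{il}\wh{g}_{jk})$
into a contribution from $\wh{R}$, giving exactly $2\mathring{\wh{R}}h$, plus a ``unit sectional curvature'' piece.

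The principal obstacle is the bookkeeping of the various correction terms. I expect that the algebraic corrections arising from $\wh{\ol{\nabla}}^*\wh{\ol{\nabla}}h$ cancel precisely against the unit-sectional-curvature contribution to $2\mathring{\ol{R}}\ol{h}$, with the cancellations forced by $\wh{\trace}h=\wh{\delta}h=0$; what remains is the clean combination $\wh{\Delta}_2h-2\mathring{\wh{R}}h=\wh{\Delta}_Eh$. Notably, no residual constant shift survives here, in contrast to the $+1$ shift visible in Lemma~\ref{hatdiv}, which in the 1-form case was produced by the Ricci term $\wh{\ric}=(n-2)\wh{g}$ acting on $\omega$ through the Weitzenböck formula.
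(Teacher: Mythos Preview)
Your proposal is correct and follows essentially the same route as the paper: both arguments reduce to a direct Christoffel-symbol computation of the second covariant derivatives and of the curvature potential on the TT slice. The only difference is organizational: the paper computes all components $\ol{\nabla}^2_{\alpha\beta}\ol{h}_{\gamma\delta}$ from scratch and then traces, whereas you first invoke the conical-operator structure to strip off the radial part and then compute the tangential operator $\Box_L$ on $TT(\wh{M})$. Your expectation that the algebraic corrections in $\wh{\ol{\nabla}}^*\wh{\ol{\nabla}}$ cancel the unit-sectional-curvature contribution to $2\mathring{\ol{R}}$ is exactly right (each contributes $+2h$ and they subtract); in fact the paper's intermediate formula $\mathring{\ol{R}}(\ol{h})_{ij}=\varphi\mathring{\wh{R}}(h)_{ij}$ is missing a $+\varphi h_{ij}$, compensated by a matching sign slip in the Hessian formula, so your framing is arguably the cleaner one.

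One small inaccuracy in your closing remark: the $+1$ shift in Lemma~\ref{hatdiv} has nothing to do with Weitzenb\"ock or with $\wh{\ric}$. The operator $\ol{\Delta}_1$ there is the bare connection Laplacian, and the $+1$ arises purely from the same Christoffel-symbol corrections you are tracking here; a single tensor index produces a $+1$ rather than the $+2$ seen for two indices, and in the 1-form case there is no curvature potential to cancel it.
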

\begin{proof}
	The condition $\ol{h}(\partial_r,.)=0$ holds as $\ol{h}_{rr}=\ol{h}_{rj}=0$.
	This immediately implies $\ol{\trace}\ol{h}=0$ as $\wh{\trace}_{\wh{g}}h=0$.  By using \eqref{christoffelA2},
	\begin{equation}
		\begin{split}
			\ol{\nabla}_i\ol{h}_{jk}&=\varphi r^2\wh{\nabla}_ih_{jk},\qquad \ol{\nabla}_rh_{ij}=\partial_r\varphi\cdot r^2h_{ij},\qquad \ol{\nabla}_i\ol{h}_{jr}=\ol{\nabla}_i\ol{h}_{rj}=-\varphi\cdot r\cdot h_{ij},\\
			\ol{\nabla}_i\ol{h}_{rr}&=\ol{\nabla}_r\ol{h}_{jr}=\ol{\nabla}_r\ol{h}_{rk}=0,
		\end{split}
	\end{equation}
	and by taking the trace with respect to $\ol{g}$ and using $\wh{\trace}h=0$, we obtain $ \ol{\delta}\ol{h}=0$. Taking the covariant derivative once again, we obtain
	\begin{equation}
		\begin{split}
			\ol{\nabla}^2_{ij}\ol{h}_{kl}&=\varphi r^2\cdot \wh{\nabla}^2_{ij}h_{kl}-r\wh{g}_{ij}\partial_r\varphi\cdot r^2h_{kl}+r^2(\wh{g}_{ik}h_{jl}+\wh{g}_{il}h_{jk}),\\
			\ol{\nabla}^2_{rr}\ol{h}_{kl}&=\partial^2_{rr}\varphi \cdot r^2h_{kl},\\
			\ol{\nabla}^2_{ij}\ol{h}_{rr}&=	2\varphi\cdot h_{ij},\\
			\ol{\nabla}^2_{rr}\ol{h}_{rr}&=	\ol{\nabla}^2_{rr}\ol{h}_{kr}=	\ol{\nabla}^2_{rr}\ol{h}_{rl}=	0,\\
			\ol{\nabla}^2_{ij}\ol{h}_{kr}&=	\ol{\nabla}^2_{ij}\ol{h}_{rk}=-2\varphi r(\wh{\nabla}_ih_{jk}+\wh{\nabla}_jh_{ik}).
		\end{split}
	\end{equation}
	By taking the trace and using that $h\in TT(\wh{M})$, we obtain
	\begin{align}
		\ol{\Delta}_2\ol{h}=r^2(-\partial^2_{rr}\varphi\cdot h-(n-1)\cdot r^{-1}\partial_r\varphi\cdot h+\varphi\cdot r^{-2}\wh{\Delta}_2h).	
	\end{align}
	It remains to consider the curvature term. However, the only nonvanishing term of the curvature of $\ol{g}$ is
	\begin{align}
		\ol{R}_{ijkl}=r^2(\wh{R}_{ijkl}+\wh{g}_{ik}\wh{g}_{jl}-\wh{g}_{il}\wh{g}_{jk}),
	\end{align}
	therefore
	\begin{align}
		\mathring{\ol{R}}(\ol{h})_{ij}=\varphi\mathring{\wh{R}}(h)_{ij},
	\end{align}
	which by adding up finishes the proof of the lemma.
\end{proof}
\begin{lem}\label{lem:conical_liediv}
	Let $\varphi,\psi\in C^{\infty}((0,\infty))$, $\omega\in C^{\infty}(T^*\wh{M})$ and $v\in C^{\infty}(\wh{M})$.
	Then, the form
	\begin{align*}
		\ol{\omega}=\varphi r\omega+\psi vdr\in C^{\infty}(T^*\ol{M})
	\end{align*}
	satisfies
	\begin{align*}
		\ol{\delta}^*\ol{\omega}=r^{-1}\varphi(r^2 \wh{\delta}^*\omega)+r^{-1}\psi v(r^2\wh{g})+\partial_r\psi \cdot v dr\otimes dr +\frac{1}{2}[(\partial_r\varphi-r^{-1}\varphi)(r\omega)+r^{-1}\psi (r dv)]\odot dr.
	\end{align*}
\end{lem}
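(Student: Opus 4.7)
The plan is to compute $\ol{\delta}^*\ol{\omega}$ directly in components using $(\ol{\delta}^*\ol{\omega})_{\alpha\beta} = \tfrac{1}{2}(\ol{\nabla}_\alpha \ol{\omega}_\beta + \ol{\nabla}_\beta \ol{\omega}_\alpha)$ together with the Christoffel symbols recorded in \eqref{christoffelA2}. Since $\ol{\delta}^*$ is linear, I will split $\ol{\omega}$ into the two summands $\ol{\omega}^{(1)} := \varphi r \omega$ and $\ol{\omega}^{(2)} := \psi v\,dr$, treat them in turn, and add the results. In both cases the components $\ol{\omega}_r$ and $\ol{\omega}_i$ are of a very restricted form, so the Christoffel computations are short.

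For $\ol{\omega}^{(1)}$, which has $\ol{\omega}^{(1)}_i = \varphi r\, \omega_i$ and $\ol{\omega}^{(1)}_r = 0$, the covariant derivatives are essentially computed in the proof of Lemma~\ref{hatdiv}, except that there the hypothesis $\wh{\delta}\omega = 0$ was used only afterwards to take a trace. Keeping all components gives $\ol{\nabla}_i\ol{\omega}^{(1)}_j = \varphi r\,\wh{\nabla}_i\omega_j$, $\ol{\nabla}_r\ol{\omega}^{(1)}_i = r(\partial_r\varphi)\omega_i$, $\ol{\nabla}_i\ol{\omega}^{(1)}_r = -\varphi\omega_i$ and $\ol{\nabla}_r\ol{\omega}^{(1)}_r = 0$. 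Symmetrization then produces the tangent-tangent piece $\varphi r\,\wh{\delta}^*\omega = r^{-1}\varphi (r^2\wh{\delta}^*\omega)$ and the mixed piece $\tfrac{1}{2}(\partial_r\varphi - r^{-1}\varphi)(r\omega)\odot dr$.

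For $\ol{\omega}^{(2)}$, which has $\ol{\omega}^{(2)}_r = \psi v$ and $\ol{\omega}^{(2)}_i = 0$, the crucial mechanism is that $\ol{\Gamma}_{ij}^r = -r\wh{g}_{ij}$ feeds the radial component back into the tangent-tangent block, yielding $\ol{\nabla}_i\ol{\omega}^{(2)}_j = r\psi v\,\wh{g}_{ij}$. The remaining derivatives are $\ol{\nabla}_i\ol{\omega}^{(2)}_r = \psi\,\partial_i v$, $\ol{\nabla}_r\ol{\omega}^{(2)}_i = 0$ and $\ol{\nabla}_r\ol{\omega}^{(2)}_r = v\,\partial_r\psi$; symmetrization contributes the conformal piece $r^{-1}\psi v(r^2\wh{g})$, the mixed piece $\tfrac{1}{2} r^{-1}\psi\,(r\,dv)\odot dr$, and the purely radial piece $(\partial_r\psi)\,v\,dr\otimes dr$.

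Adding the two contributions yields the claimed formula; since $(r\omega)_r = 0$ and $(r\,dv)_r = 0$, the expansion of $[\cdots]\odot dr$ contributes only to the mixed $ir$-components and not to the $rr$- or $ij$-blocks, so there is no interference between the four terms. The only real obstacle is bookkeeping: carefully tracking the powers of $r$ that appear from the Christoffel symbols versus those absorbed into the rescaled tensors $r^2\wh{\delta}^*\omega$, $r^2\wh{g}$, $r\omega$ and $r\,dv$ appearing on the right-hand side, and applying the convention $\alpha\odot\beta = \alpha\otimes\beta + \beta\otimes\alpha$ correctly when reassembling the mixed components. No further analytic input is required.
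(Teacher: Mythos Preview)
Your proposal is correct and follows essentially the same approach as the paper: both compute the covariant derivatives $\ol{\nabla}_\alpha\ol{\omega}_\beta$ directly from the Christoffel symbols \eqref{christoffelA2} and then symmetrize. The only cosmetic difference is that you split $\ol{\omega}$ into the two summands $\varphi r\omega$ and $\psi v\,dr$ and handle them separately, whereas the paper records the four derivatives $\ol{\nabla}_i\ol{\omega}_j$, $\ol{\nabla}_r\ol{\omega}_r$, $\ol{\nabla}_r\ol{\omega}_j$, $\ol{\nabla}_i\ol{\omega}_r$ for the full $\ol{\omega}$ in one go; the underlying computation is identical.
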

\begin{proof}
	Straightforward calculations show that
	\begin{align*}
		\ol{\nabla}_i\ol{\omega}_j&=r\varphi\wh{\nabla}_i\omega_j+r\wh{g}_{ij}\psi v, &
		\ol{\nabla}_r\ol{\omega}_r&=\partial_r\psi \cdot v,\\
		\ol{\nabla}_r\ol{\omega}_j&=r\partial_r\varphi\cdot \omega_j, &
		\ol{\nabla}_i\ol{\omega}_r&=\psi\cdot\partial_iv-\varphi\omega_i,
	\end{align*}
	and the result is immediate.
\end{proof}

\begin{thm}\label{thm_spectrum_tangential_LL}
	The spectrum of the tangential operator $\Box_L$ of the Lichnerowicz Laplacian is given by
	\begin{align*}
		\spectrum(\Box_L)=\SetDefine{\kappa_i}{i\in \N}\cup \SetDefine{\mu^{(1)}_{i,\pm}}{i\in \N}\cup \SetDefine{\lambda_{i}}{i\in \N}\cup
		\SetDefine{\lambda^{(2)}_{i,\pm}}{i\in \N}
		\cup\SetDefine{0,2n}{},
	\end{align*}
	where
	\begin{align*}
		\mu^{(1)}_{i,\pm}:=\eta(\xi_{\pm}(\mu_i+1)-1),\qquad \lambda^{(2)}_{i,\pm}:=\eta(\xi_{\pm}(\lambda_i)-2).
	\end{align*}
	The corresponding eigenspaces are
	\begin{align*}
		E(\Box_L,\kappa_i)&=\SetDefine{h}{h \in E(\wh{\Delta}_E,\kappa_i)},\\
		E(\Box_L,\mu^{(1)}_{i,\pm})&=\SetDefine{\wh{\delta}^*\omega+\frac{1}{2}(\xi_{\pm}(\mu_i+1)-1)\omega\odot dr}{\omega \in E(\wh{\Delta}_1,\mu_i)},\\
		E(\Box_L,\mu^{(1)}_{0,-}=2n)&=\SetDefine{\omega\odot dr}{\omega \in E(\wh{\Delta}_1,\mu_i) },\\
		E(\Box_L,\lambda^{(2)}_{i,\pm})&=\SetDefine{\mathring{\wh{\nabla}}^2v-\left(\frac{\lambda_i}{n-1}-\xi_{\pm}(\lambda_i)\right) v\mathring{\wh{g}}+(\xi_{\pm}(\lambda_i)-1)\wh{\nabla} v\odot dr}{v \in E(\wh{\Delta},\lambda_i) },\\
		E(\Box_L,\lambda_{i})&=\SetDefine{\mathring{\wh{\nabla}}^2v
			+\frac{\lambda_i(n-2)}{n(n-1)}v\mathring{\wh{g}}-\frac{1}{2}(n-2)\wh{\nabla} v\odot dr+w\cdot \ol{g}}{v,w \in E(\wh{\Delta},\lambda_i) }, \\
		E(\Box_L,\lambda_0=0)&=\SetDefine{\alpha \ol{g}}{\alpha\in\R},\\
		E(\Box_L,\lambda^{(2)}_{0,-}=2n)&=\SetDefine{\alpha \mathring{\wh{g}}}{\alpha\in\R}.
	\end{align*}
	Here, $\mathring{\wh{\nabla}}^2$ denotes the trace-free part of the Hessian \eqref{eq:trace-free-Hessian} and $\mathring{\wh{g}}=\wh{g}-(n-1)dr\otimes dr$ denotes the trace-free part of $\wh{g}$ with respect to $\ol{g}$.
\end{thm}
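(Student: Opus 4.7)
The plan is to construct kernel elements of $\ol{\Delta}_L$ of the form $r^\alpha\cdot u$, with $u$ a radially parallel (``tangential'') tensor, and to read off that $u$ is then a $\Box_L$-eigenvector of eigenvalue $\eta(\alpha)$. The input data are the three pieces of the decomposition \eqref{eq:decomp_2tensors} on the link, together with the two bundle-theoretic summands $T^*\wh{M}\odot dr$ and $\R\cdot(dr\otimes dr)$ which together make up $S^2\ol{M}|_{\{1\}\times\wh{M}}$. At the cone level I combine Lemmas~\ref{ttlemma},~\ref{hatdiv}, and~\ref{lem:conical_liediv} with the commutation identities \eqref{commutation}, which in the Ricci-flat setting specialize to $\ol{\Delta}_L\circ\ol{\delta}^*=\ol{\delta}^*\circ\ol{\Delta}_1$ and $\ol{\Delta}_L(f\ol{g})=(\ol{\Delta}_0 f)\ol{g}$, allowing us to pass from harmonic scalars and 1-forms to harmonic symmetric 2-tensors on $\ol{M}$.

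For a TT-eigentensor $h$ of $\wh{\Delta}_E$ of eigenvalue $\kappa$, Lemma~\ref{ttlemma} directly gives $r^{\xi_\pm(\kappa)}\cdot r^2 h\in\ker(\ol{\Delta}_L)$, so $h$ is a $\Box_L$-eigenvector of eigenvalue $\kappa$. For a divergence-free $\omega\in E(\wh{\Delta}_1,\mu)$, Lemma~\ref{hatdiv} produces $r^{\xi_\pm(\mu+1)+1}\omega\in\ker(\ol{\Delta}_1)$, and applying $\ol{\delta}^*$ (unpacked via Lemma~\ref{lem:conical_liediv} with $\varphi=r^{\xi_\pm(\mu+1)}$, $\psi=0$) extracts the tangential tensor $\wh{\delta}^*\omega+\tfrac{1}{2}(\xi_\pm(\mu+1)-1)\omega\odot dr$ with $\Box_L$-eigenvalue $\eta(\xi_\pm(\mu+1)-1)=\mu^{(1)}_{i,\pm}$. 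The Killing case $\mu=n-2$ is degenerate: since $\wh{\delta}^*\omega=0$ and $\xi_+(n-1)=1$, the $\xi_+$-branch vanishes, while the $\xi_-$-branch yields the eigenvector $\omega\odot dr$ of eigenvalue $\mu^{(1)}_{0,-}=2n$.

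The main case is that of a Laplace eigenfunction $v$ with $\wh{\Delta}v=\lambda v$. Setting $u_\pm:=r^{\xi_\pm(\lambda)}v\in\ker(\ol{\Delta})$, so that $du_\pm\in\ker(\ol{\Delta}_1)$ by commutation, I produce three families of kernel elements of $\ol{\Delta}_L$. First, $u_\pm\ol{g}$ yields the eigenvector $v\ol{g}$ of eigenvalue $\lambda$, hence $w\ol{g}$ for any $w\in E(\wh{\Delta},\lambda_i)$. Second, $\ol{\nabla}^2 u_\pm=\ol{\delta}^* du_\pm$, unpacked via Lemma~\ref{lem:conical_liediv} with $\varphi=r^{\xi_\pm(\lambda)-1}$ and $\psi=\xi_\pm(\lambda)r^{\xi_\pm(\lambda)-1}$, yields the generator of $E(\Box_L,\lambda^{(2)}_{i,\pm})$ in the stated form after invoking the identity $\lambda-(n-1)\xi_\pm(\lambda)=\xi_\pm(\lambda)(\xi_\pm(\lambda)-1)$. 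Third, using the indicial-root duality sketched in the introduction, the form $\omega_+:=r^{\xi_+(\lambda)-\xi_-(\lambda)+2}du_-\in\ker(\ol{\Delta}_1)$ has $\ol{\delta}^*\omega_+$ of radial factor $r^{\xi_+(\lambda)}$, providing a second eigenvector of eigenvalue $\lambda$ that coincides with the prescribed $v$-dependent generator of $E(\Box_L,\lambda_i)$ up to a multiple of $v\ol{g}$; this reduction is the main algebraic step and rests on the Vieta identity $\xi_+(\lambda)\xi_-(\lambda)=-\lambda$. The subcase $\lambda=0$ collapses most constructions to zero; the surviving eigenvectors are $\alpha\ol{g}$ (eigenvalue $0$) and $\alpha\mathring{\wh{g}}$, the latter with eigenvalue $2n$ coming from the $\xi_-(0)=2-n$ branch.

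To close, completeness of the listed eigenspaces follows from the splitting $S^2\ol{M}|_{\{1\}\times\wh{M}}=S^2\wh{M}\oplus(T^*\wh{M}\odot dr)\oplus\R\cdot(dr\otimes dr)$ combined with \eqref{eq:decomp_2tensors}: every section decomposes into pieces each of which is $L^2$-spanned by an $\wh{M}$-eigenbasis, and the per-eigenfunction/per-eigenform multiplicity counts (four $\Box_L$-eigenvectors per $\wh{\Delta}$-eigenfunction, two per divergence-free eigenform, one per TT-eigentensor) exhaust $L^2(S^2\ol{M}|_{\{1\}\times\wh{M}})$. I expect the main obstacles to be \textbf{(i)} the algebraic reduction in the third family of Step~3, where one must match $\ol{\delta}^*\omega_\pm$ against the prescribed generator modulo $v\ol{g}$, and \textbf{(ii)} the careful case-by-case handling of the degenerate values $\lambda=0$ and $\mu=n-2$, as well as accidental coincidences among $\kappa_i$, $\mu^{(1)}_{i,\pm}$, $\lambda_i$ and $\lambda^{(2)}_{i,\pm}$, which require a separate verification of linear independence within each $\Box_L$-eigenspace.
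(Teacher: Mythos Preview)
Your proposal is correct and follows essentially the same route as the paper's own proof: both exploit Lemmas~\ref{ttlemma}, \ref{hatdiv}, \ref{lem:conical_liediv} and the commutation identities \eqref{commutation} to manufacture kernel elements of $\ol{\Delta}_L$ of the form $r^\alpha u$ and read off $\Box_L u=\eta(\alpha)u$, and both close by the bundle splitting $S^2\ol{M}|_{\{1\}\times\wh{M}}\cong C^\infty(\wh{M})\oplus C^\infty(T^*\wh{M})\oplus C^\infty(S^2\wh{M})$ together with \eqref{eq:decomp_1forms} and \eqref{eq:decomp_2tensors}. The only cosmetic differences are that the paper applies the trace-free $\mathring{\ol{\delta}}^*$ in the function case (you use $\ol{\delta}^*$, which differs by a $v\ol{g}$-term already accounted for), and that the paper invokes Proposition~\ref{prop:spectrum_tang_1forms} to list the four harmonic $1$-forms generated by $v$, whereas you reach the same forms $du_\pm$ and $\omega_\pm$ directly via the duality heuristic from the introduction; the resulting tensors $\ol{h}_2,\ldots,\ol{h}_5$ coincide.
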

\begin{rem}
	If $(\wh{M},\wh{g})$ admits Killing fields, then we start to count the $\mu_i$ from zero and $\mu_0=(n-2)$, c.f.\ the discussion at the beginning of Subsection \ref{subsec:tang_op_forms}. In this case, $\xi_+(\mu_0+1)=1$ and the eigenvalue $\mu_{0,+}$ drops as the corresponding eigentensor vanishes. On the other hand $\mu^{(1)}_{0,-}$ still exists and equals $\mu^{(1)}_{0,-}=\lambda^{(2)}_{0,-}=2n$. Thus by this counting convention, we do not need to distinguish between the cases with or without Killing fields in Theorem~\ref{mainthm:indicial_roots_LL} and always count $\mu_i$ for $i\in\N$ there.
	
	Similarly by the Lichnerowicz--Obata eigenvalue inequality \cite{Ob62}, we have $\lambda_i\geq n-1$ for all $i\in\N$ and equality holds only for the round sphere. In this case, we have $\mathring{\wh{\nabla}}^2v=0$ for the corresponding eigenfunctions. 
		 In this case $\xi_+(\lambda_{1})=1$ and the eigenvalue 
	$\lambda^{(2)}_{1,+}$ drops and the eigenvalue $\lambda^{(2)}_{1,-}=3(n+1)$ is still present.
\end{rem}
\begin{rem}
	The indicial values of $\ol{\Delta}$ and $\ol{\Delta}_L$ coming from an eigenfunction $v\in E(\wh{\Delta},\lambda_i)$ are related by
	\begin{align*}
		\xi_+(\lambda_i)=\xi_+(\lambda^{(2)}_{i,+})+2=\xi_+(\lambda^{(2)}_{i,-})-2,\qquad 
		\xi_-(\lambda_i)=\xi_-(\lambda^{(2)}_{i,-})+2=\xi_-(\lambda^{(2)}_{i,+})-2
	\end{align*}
	and indicial values of $\ol{\Delta}_1$ and $\ol{\Delta}_L$ coming from an eigenform $\omega\in E(\wh{\Delta}_1|_{D(\wh{M})},\mu_i)$	are related by
	\begin{align*}
		\xi_+(\mu_i)=\xi_+(\mu^{(1)}_{i,+})+1=\xi_+(\mu^{(1)}_{i,-})-1,\qquad 
		\xi_-(\mu_i)=\xi_-(\mu^{(1)}_{i,-})+1=\xi_-(\mu^{(1)}_{i,+})-1.
	\end{align*}
\end{rem}
\begin{proof}[Proof of Theorem~\ref{thm_spectrum_tangential_LL}]
	Let $h\in TT(\wh{M})$ be such that $\wh{\Delta}_Eh=\kappa h$. Then due to Lemma~\ref{ttlemma}, we have
	\begin{align*}
		r^{\xi_{\pm}(\kappa)}(r^2h)\in\ker(\ol{\Delta}_L).
	\end{align*}	
	Comparing \eqref{eq:TT_conical} and the conical structure of $\ol{\Delta}_L$, we however immediately get 
	\begin{align*}
		\Box_Lh=\wh{\Delta}_Eh=\kappa h.
	\end{align*}	
	Next let $\omega\in D(\wh{M})$ be such that $\wh{\Delta}_1\omega=\kappa\omega$. From Lemma~\ref{hatdiv}, we know that
	\begin{align*}
		\ol{\omega}=r^{\xi_{\pm}(\mu+1)}(r\omega)\in \ker(\ol{\Delta}_1).
	\end{align*}
	From Lemma~\ref{lem:conical_liediv} and the commutation rules \eqref{commutation},
	\begin{align*}
		\ol{\delta}^*\ol{\omega}=r^{\xi_{\pm}(\mu+1)-1}\left[(r^2\wh{\delta}^*\omega)+\frac{1}{2}(\xi_{\pm}(\mu+1)-1)(r\omega)\odot dr\right]\in \ker(\ol{\Delta}_L).
	\end{align*}
	Because the Einstein operator is conical, we get that the sections
	\begin{align}\label{eq:h_pm}
		h_{\pm}:=\wh{\delta}^*\omega+\frac{1}{2}(\xi_{\pm}(\mu+1)-1)\omega\odot dr\in C^{\infty}(S^2\ol{M}|_{\SetDefine{1}{}\times\wh{M}})
	\end{align}
	satisfy
	\begin{align*}
		\Box_L(h_{\pm})=\eta(\xi_{\pm}(\mu+1)-1)h_{\pm}=\mu_{\pm}^{(1)}h_{\pm}.
	\end{align*}
	To conclude the discussions on eigenvalues  and eigensections generated by elements in $D(\wh{M})$, we recall that $\mu\geq n-2$ and $\wh{\Delta}_1\omega=(n-2)\omega$ if and only if $\wh{\delta}^*\omega=0$. In this case, $\xi_{+}(\mu+1)-1=\xi_{+}(n-1)-1=0$, therefore $h_+=0$ and $\xi_{-}(\mu+1)-1=\xi_{-}(n-1)-1=-n$. This implies $h_-\in \R\cdot \omega\odot dr$ and $h_-\neq 0$ if $\omega\neq 0$.
	
	It remains to compute the eigenvalues and eigensections generated by smooth functions on $\wh{M}$. Let $w\in C^{\infty}(\wh{M})$ be such that $\wh{\Delta} w=\lambda w$. Then,
	\begin{align*}
		r^{\xi_{\pm}(\lambda)}w\in \ker(\ol{\Delta}),
	\end{align*}
	and hence
	\begin{align*}
		r^{\xi_{\pm}(\lambda)}w\cdot \ol{g}=r^{\xi_{\pm}(\lambda)}w(dr\otimes dr+r^2\wh{g})\in \ker(\ol{\Delta}_L).
	\end{align*}
	Due to the structure of the Lichnerowicz Laplacian, we get for
	\begin{align*}
		h_{1}(w):=r^{\xi_{\pm}(\lambda)}w\cdot \ol{g}|_{\SetDefine{1}{}\times \wh{M}}=w(dr\otimes dr+\wh{g})
	\end{align*}
	that
	\begin{align*}
		\Box_Lh_{1}(w)=\lambda\cdot  h_{1}(w).
	\end{align*}
	Finally, it remains to consider the $\ol{g}$-traceless eigensections generated by functions. For $v\in C^{\infty}(\wh{M})$ with $\wh{\Delta}_0 v = \lambda v$ and $\alpha,\beta\in\R$, we first make the ansatz
	\begin{align*}
		\ol{\omega}=r^{\beta}(\alpha v dr+rdv).
	\end{align*}
	Using Lemma~\ref{lem:conical_liediv}, we get
	\begin{align*}
		\ol{\delta}^*(r^\beta vdr)&=r^{\beta-1}(vr^2\wh{g}+\beta vdr\otimes dr+\frac{1}{2}r\wh{\nabla} v\odot dr),\\
		\ol{\delta}^*(r^\beta rdv)&=r^{\beta-1}(r^2\wh{\nabla}^2v+\frac{1}{2}(\beta-1)r\wh{\nabla} v\odot dr),
	\end{align*}
	and we conclude the trace-free part of the symmetrized covariant derivative equals
	\begin{align*}
		\mathring{\ol{\delta}}^*\ol{\omega}
		&=\ol{\delta}^*\ol{\omega}+\frac{1}{n}\ol{\delta}\ol{\omega}\\
		&=r^{\beta-1}\Bigg[r^2\wh{\nabla}^2v+\frac{\lambda}{n}v(dr\otimes dr+r^2\wh{g})+\frac{\alpha(\beta-1)}{n}v((n-1)dr\otimes dr-r^2\wh{g}) \\
		&\qquad +\frac{1}{2}(\alpha+\beta-1)r\wh{\nabla} v\odot dr\Bigg]\\
		&=r^{\beta-1}\Bigg[r^2\left(\wh{\nabla}^2v+\frac{\lambda}{n-1}v\wh{g}\right) 
		+\frac{\lambda+(n-1)\alpha(\beta-1)}{n(n-1)}v((n-1)dr\otimes dr-r^2\wh{g}) \\
		&\qquad +\frac{1}{2}(\alpha+\beta-1)r\wh{\nabla} v\odot dr\Bigg].
	\end{align*}
	We now want to find $\alpha,\beta \in\R$ such that  $\ol{\omega}\in \ker(\ol{\Delta}_1)$.
	By Proposition~\ref{prop:spectrum_tang_1forms}, we have
	\begin{align*}
		\Box_1(\xi_{\pm}(\lambda)vdr+dv)=\lambda^{(1)}_{i,\pm}(\xi_{\pm}(\lambda)vdr+dv),\qquad \lambda^{(1)}_{\pm}:= \eta(\xi_{\pm}(\lambda)-1).
	\end{align*}
	Since the connection Laplacian $\wh{\Delta}_1$ on one-forms is conical, the forms
	\begin{align*}
		&r^{\xi^{(1)}_{+}(\lambda_+)}(\xi_{+}(\lambda)vdr+rdv),\qquad
		&r^{\xi^{(1)}_{-}(\lambda_+)}(\xi_{+}(\lambda)vdr+rdv),\\
		&r^{\xi^{(1)}_{+}(\lambda_-)}(\xi_{-}(\lambda)vdr+rdv),\qquad
		&r^{\xi^{(1)}_{-}(\lambda_-)}(\xi_{-}(\lambda)vdr+rdv),
	\end{align*}
	are all harmonic on $\ol{M}$. 
	Due to the previous computations, the trace-free parts of their symmetrized covariant derivatives are given by the tensors
	\begin{align*}
		\ol{h}_2(v):&=r^{\xi_{+}(\lambda_+)-1}\Bigg[r^2\left(\wh{\nabla}^2v+\frac{\lambda}{n-1}v\wh{g}\right) 
		+\tfrac{\lambda+(n-1)\xi_{+}(\lambda)(\xi_{+}(\lambda^{(1)}_+)-1)}{n(n-1)}v((n-1)dr\otimes dr-r^2\wh{g})\\
		&\qquad+\frac{1}{2}(\xi_{+}(\lambda)+\xi_{+}(\lambda^{(1)}_+)-1)r\wh{\nabla} v\odot dr\Bigg],\\
		\ol{h}_3(v):&=r^{\xi_{-}(\lambda^{(1)}_-)-1}\Bigg[r^2\left(\wh{\nabla}^2v+\frac{\lambda}{n-1}v\wh{g}\right) 
		+\tfrac{\lambda+(n-1)\xi_{-}(\lambda)(\xi_{-}(\lambda^{(1)}_-)-1)}{n(n-1)}v((n-1)dr\otimes dr-r^2\wh{g})\\
		&\qquad+\frac{1}{2}(\xi_{-}(\lambda)+\xi_{-}(\lambda^{(1)}_-)-1)r\wh{\nabla} v\odot dr\Bigg],\\
		\ol{h}_4(v):&=r^{\xi_{+}(\lambda^{(1)}_-)-1}\Bigg[r^2\left(\wh{\nabla}^2v+\frac{\lambda}{n-1}v\wh{g}\right)
		+\tfrac{\lambda+(n-1)\xi_{-}(\lambda)(\xi_{+}(\lambda^{(1)}_-)-1)}{n(n-1)}v((n-1)dr\otimes dr-r^2\wh{g})\\
		&\qquad+\frac{1}{2}(\xi_{-}(\lambda)+\xi_{+}(\lambda^{(1)}_-)-1)r\wh{\nabla} v\odot dr\Bigg],\\
		\ol{h}_5(v):&=r^{\xi_{-}(\lambda^{(1)}_+)-1}\Bigg[r^2\left(\wh{\nabla}^2v+\frac{\lambda}{n-1}v\wh{g}\right)
		+\tfrac{\lambda+(n-1)\xi_{+}(\lambda)(\xi_{-}(\lambda^{(1)}_+)-1)}{n(n-1)}v((n-1)dr\otimes dr-r^2\wh{g})\\
		&\qquad+\frac{1}{2}(\xi_{+}(\lambda)+\xi_{-}(\lambda^{(1)}_+)-1)r\wh{\nabla} v\odot dr\Bigg],
	\end{align*}
	which are, by the commutation rules \eqref{commutation}, all in the kernel of the Lichnerowicz Laplacian.
	Simple manipulations yield
	\begin{align*}
		\xi_{+}(\lambda^{(1)}_+)&=\xi_{+}(\lambda)-1,& \xi_{-}(\lambda^{(1)}_+)&=\xi_{-}(\lambda)+1,\\
		\xi_{-}(\lambda^{(1)}_-)&=\xi_{-}(\lambda)-1,& \xi_{+}(\lambda^{(1)}_-)&=\xi_{+}(\lambda)+1,
	\intertext{and therefore,}
		\xi_{+}(\lambda^{(1)}_+)-1&=\xi_{+}(\lambda)-2,& \xi_{-}(\lambda^{(1)}_-)-1&=\xi_{-}(\lambda)-2,\\
		\xi_{+}(\lambda^{(1)}_-)-1&=\xi_{+}(\lambda),  & \xi_{-}(\lambda^{(1)}_+)-1&=\xi_{-}(\lambda).
	\end{align*}
	This implies that
	\begin{alignat*}{99}
		\xi_{-}(\lambda)(\xi_{+}(\lambda^{(1)}_-)-1)&=\xi_{+}(\lambda)(\xi_{-}(\lambda^{(1)}_+)-1)&&=\xi_{+}(\lambda)\cdot\xi_{-}(\lambda)&&=-\lambda,\\
		\xi_-(\lambda)+\xi_{+}(\lambda^{(1)}_-)-1&=\xi_{+}(\lambda)+\xi_{-}(\lambda^{(1)}_+)-1&&=\xi_{+}(\lambda)+\xi_{-}(\lambda)&&=2-n.
	\end{alignat*}
	Furthermore, we compute
	\begin{align*}
		\lambda+(n-1)\xi_{\pm}(\lambda)(\xi_{\pm}(\lambda)-2)&=\lambda+(n-1)\xi_{\pm}(\lambda)(\xi_{\pm}(\lambda)+n-2-n)\\
		&=\lambda+(n-1)\lambda-(n-1)n\xi_{\pm}(\lambda)=
		n(\lambda-(n-1)\xi_{\pm}(\lambda)),
	\end{align*}
	therefore
	\begin{align*}
		\ol{h}_2(v)&=r^{\xi_{+}(\lambda)-2}\Bigg[r^2\left(\wh{\nabla}^2v+\frac{\lambda}{n-1}v\wh{g}\right) 
		+\frac{\lambda-(n-1)\xi_{+}(\lambda)}{n-1}v((n-1)dr\otimes dr-r^2\wh{g})\\
		&\qquad+(\xi_{+}(\lambda)-1)r\wh{\nabla} v\odot dr\Bigg],\\
		\ol{h}_3(v)&=r^{\xi_{-}(\lambda)-2}\Bigg[r^2\left(\wh{\nabla}^2v+\frac{\lambda}{n-1}v\wh{g}\right) 
		+\frac{\lambda-(n-1)\xi_{-}(\lambda)}{n-1}v((n-1)dr\otimes dr-r^2\wh{g})\\
		&\qquad+(\xi_{-}(\lambda)-1)r\wh{\nabla} v\odot dr\Bigg],\\
		\ol{h}_4(v)&=r^{\xi_{+}(\lambda)}\left[r^2\left(\wh{\nabla}^2v+\frac{\lambda}{n-1}v\wh{g}\right) 
		-\frac{\lambda(n-2)}{n(n-1)}v((n-1)dr\otimes dr-r^2\wh{g})
		-\frac{1}{2}(n-2)r\wh{\nabla} v\odot dr\right],\\
		\ol{h}_5(v)&=r^{\xi_{-}(\lambda)}\left[r^2\left(\wh{\nabla}^2v+\frac{\lambda}{n-1}v\wh{g}\right) 
		-\frac{\lambda(n-2)}{n(n-1)}v((n-1)dr\otimes dr-r^2\wh{g})
		-\frac{1}{2}(n-2)r\wh{\nabla} v\odot dr\right].
	\end{align*}
	Because the Lichnerowicz Laplacian is conical, we get that the three tensors
	\begin{align*}
		\wh{h}_2(v):&=\ol{h}_2(v)|_{\SetDefine{1}{}\times \wh{M}}&\\
		&=\left(\wh{\nabla}^2v+\frac{\lambda}{n-1}v\wh{g}\right) 
		+\left(\frac{\lambda}{n-1}-\xi_{+}(\lambda)\right)v((n-1)dr\otimes dr-\wh{g})
		+(\xi_{+}(\lambda)-1)\wh{\nabla} v\odot dr,\\
		\wh{h}_3(v):&=\ol{h}_3(v)|_{\SetDefine{1}{}\times \wh{M}}\\
		&=\left(\wh{\nabla}^2v+\frac{\lambda}{n-1}v\wh{g}\right) 
		+\left(\frac{\lambda}{n-1}-\xi_{-}(\lambda)\right)v((n-1)dr\otimes dr-\wh{g})
		+(\xi_{-}(\lambda)-1)\wh{\nabla} v\odot dr,\\
		\wh{h}_4(v):&=\ol{h}_4(v)|_{\SetDefine{1}{}\times \wh{M}}\\
		&=\left(\wh{\nabla}^2v+\frac{\lambda}{n-1}v\wh{g}\right) 
		-\frac{\lambda(n-2)}{n(n-1)}v((n-1)dr\otimes dr-\wh{g})
		-\frac{1}{2}(n-2)\wh{\nabla} v\odot dr \\
		&=\ol{h}_5(v)|_{\SetDefine{1}{}\times \wh{M}}
	\end{align*}
	are eigensections of $\Box_L$ with eigenvalues
	\begin{align*}
		\Box_L \wh{h}_2(v)=\eta(\xi_+(\lambda)-2)\wh{h}_2(v),\qquad \Box_L \wh{h}_3(v)=\eta(\xi_-(\lambda)-2)\wh{h}_2(v),\qquad \Box_L \wh{h}_4(v)=\lambda \wh{h}_4(v).
	\end{align*}
	This finishes the discussion of eigenvalues $\lambda>0$.
	Recall that if $\lambda=0$, one has $\xi_+(\lambda)=0$ and $\xi_-(\lambda)=2-n$; moreover, $v$ is constant and all its covariant derivatives vanish. In this case, the only nonvanishing tensor of the above is
	\begin{align*}
		\wh{h}_3(v)=(n-2)v((n-1)dr\otimes dr-\wh{g})
	\end{align*}
	and the corresponding eigenvalue is $\eta(\xi_-(\lambda)-2)=\eta(-n)=(-n)(-n+n-2)=2n$. By now we have constructed all eigenvalues and eigensections that appear in the assertion of the theorem. Using the natural identification
	\begin{align*}
		C^{\infty}(S^2\ol{M}|_{\SetDefine{1}{}\times \wh{M}})=C^{\infty}(\wh{M})\oplus C^{\infty}(T^*\wh{M})\oplus C^{\infty}(S^2\wh{M})
	\end{align*}
	together with the decompositions \eqref{eq:decomp_1forms} and \eqref{eq:decomp_2tensors}, we get that the $L^2$-span of the eigensections we have constructed equals all of $L^2(S^2\ol{M}|_{\SetDefine{1}{}\times \wh{M}})$.
\end{proof}
\begin{cor}\label{cor:decay_kernel_LL}
	Let  $\ol{h}\in \mathrm{ker}(\ol{\Delta}_{\ol{L}})$, not necessarily defined on all of $\ol{M}$. Let $\nu_i$, $i\in \N$ be the eigenvalues of~$\Box_L$ and define
	\begin{align*}
		\xi^L_+:=\min\SetDefine{\mathrm{Re}(\xi_{\pm}(\nu_i))}{i\in\N}\cap (0,\infty),\qquad 	\xi^L_-:=\min\SetDefine{-\mathrm{Re}(\xi_{\pm}(\nu_i))}{i\in\N}\cap (0,\infty).
	\end{align*}
	Then we have:
	\begin{itemize}
		\item[(i)] If $\ol{h}$ is defined on $(0,\epsilon)\times \wh{M}$ and $|\ol{h}|\to 0$ as $r\to 0$, then $\ol{h}=\O_{\infty}(r^{\xi_+})$ as $r\to0$.
		\item[(ii)]	If $\ol{h}$ is defined on $(R,\infty)\times \wh{M}$ and $|\ol{h}|\to 0$ as $r\to\infty $, then 
		we divide into two subcases:
		\begin{itemize}
			\item[(iia)] If $\NotResonanceDominated{\Box_L}$, then $\ol{h}=\O_{\infty}(r^{-\xi_-})$  as $r\to\infty $.
			\item[(iib)] If $\ResonanceDominated{\Box_L}$, then $\ol{h}=\O_{\infty}(r^{-\frac{n-2}{2}}\log(r))$ as $r\to\infty $.
		\end{itemize}
	\end{itemize}
\end{cor}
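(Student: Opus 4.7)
The plan is to deduce this corollary directly from the general result Corollary~\ref{cor:decay_harmonic_sections} applied to $\ol{\Delta}_L$ on $\ol{V}=S^2\ol{M}$. The key input is that $\ol{\Delta}_L$ is a conical operator with tangential operator $\Box_L$, which has been verified via Lemma~\ref{lem:conical_Schroedinger_operators} combined with the scaling of the curvature potential $\mathring{\ol{R}}$ on the cone. Moreover, the real numbers $\xi^L_\pm$ appearing in the statement are defined in exactly the same way as the abstract $\xi^{\wh{V}}_\pm$ preceding Corollary~\ref{cor:decay_harmonic_sections}, so the two statements are structurally identical.

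With the conical structure in hand, any $\ol{h}\in\kernel(\ol{\Delta}_L)$ admits the harmonic expansion \eqref{eq:harmonic_expansion} in terms of an $L^2$-orthonormal eigenbasis $\{u_i\}_{i\in\N}$ of $\Box_L$, with indicial exponents $r^{\xi_\pm(\nu_i)}$ together with the logarithmic term $r^{-(n-2)/2}\log r$ arising exactly when $\nu_i=-\tfrac{(n-2)^2}{4}$. By Theorem~\ref{thm_spectrum_tangential_LL}, $\spectrum(\Box_L)$ is purely discrete and bounded below, so only finitely many terms contribute exponents with real part in any bounded interval. The expansion is therefore controlled in every $\mathcal{O}_k$-norm: radial differentiation acts diagonally on the expansion by lowering the exponent by one, while tangential differentiation maps each finite-dimensional eigenspace into a finite sum of eigenspaces, which upgrades pointwise estimates to $\mathcal{O}_\infty$-estimates.

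The case analysis then reduces to inspecting which terms of the expansion survive the prescribed boundary behaviour. For (i), the hypothesis $|\ol{h}|\to 0$ as $r\to 0$ forces the vanishing of all coefficients attached to exponents of nonpositive real part as well as all logarithmic coefficients, and the slowest-vanishing surviving term is governed by the smallest positive real part among all indicial exponents, namely $\xi^L_+$. For (iia), the hypothesis $|\ol{h}|\to 0$ as $r\to\infty$ together with the non-resonance-dominated assumption eliminates all nonnegative-real-part exponents as well as the logarithmic term, and the slowest decay among the surviving exponents is exactly $-\xi^L_-$. For (iib), $\Box_L$ has no eigenvalues in $[-\tfrac{(n-2)^2}{4},0)$ except the resonance itself, so the slowest decaying surviving terms are exactly $r^{-(n-2)/2}$ and $r^{-(n-2)/2}\log r$, and the logarithmic factor dominates.

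I do not anticipate any substantial obstacle here, since the statement is essentially a direct specialization of the already-available Corollary~\ref{cor:decay_harmonic_sections} to the conical operator $\ol{\Delta}_L$, whose spectrum has been fully described in Theorem~\ref{thm_spectrum_tangential_LL}. The only mild technical point is the passage from pointwise decay to $\mathcal{O}_\infty$ decay, which is standard and follows from the clean interaction of $\ol{\nabla}$ with the spectral decomposition of $\Box_L$ together with local elliptic regularity for the Lichnerowicz Laplacian.
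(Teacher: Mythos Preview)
Your proposal is correct and matches the paper's approach: the paper states this corollary without proof, treating it as an immediate specialization of Corollary~\ref{cor:decay_harmonic_sections} to the conical operator $\ol{\Delta}_L$ with tangential operator $\Box_L$. Your identification of $\xi^L_\pm$ with the abstract $\xi^{\wh{V}}_\pm$ and your remark that the $\mathcal{O}_\infty$-upgrade is already built into Corollary~\ref{cor:decay_harmonic_sections} are exactly the right observations.
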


\subsection{A simple proof of linear stability}
Recall that the Hardy inequality states that
\begin{align*}
	\inf_{\varphi\in C^{\infty}_{\text{cs}}((0,\infty))}\frac{\int_{0}^{\infty} (\partial_r\varphi)^2r^{n-1}ds}{\int_{0}^{\infty} \varphi^2r^{n-3}ds}=\frac{(n-2)^2}{4}.
\end{align*}
\begin{lem}\label{lem:nonnegative_conical_operators}
	Let $\ol{\Delta}_{\ol{V}}$ be a conical self-adjoint Laplace type operator acting on sections of a vector bundle $\ol{V}$ over $\ol{M}$. Let $\Box_{\wh{V}}$ be its tangential operator. Then,
	\begin{align*}
		\ol{\Delta}_{\ol{V}}\geq0
	\end{align*}
	if and only if 
	\begin{align*}
		\Box_{\wh{V}}\geq -\frac{(n-2)^2}{4}.	
	\end{align*}
\end{lem}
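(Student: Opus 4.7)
The natural approach is to test $\ol{\Delta}_{\ol{V}}$ against sections of separated type $\phi(r)u(p)$, where $u$ is an eigensection of $\Box_{\wh{V}}$, and reduce the positivity question on $\ol{M}$ to a one-dimensional weighted Hardy-type inequality on $(0,\infty)$. The volume form of the cone is $r^{n-1}dr\,dV_{\wh{g}}$, so I expect the weight $r^{n-1}$ to be the one appearing in the Hardy inequality stated just above the lemma.

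First, I would fix an $L^{2}$-orthonormal eigenbasis $\{u_{i}\}$ of $\Box_{\wh{V}}$ with eigenvalues $\nu_{i}$, which exists by ellipticity and self-adjointness on the closed manifold $\wh{M}$. For any $\ol{u}\in C^{\infty}_{\mathrm{cs}}((0,\infty)\times \wh{M};\ol{V})$, expand $\ol{u}(r,p)=\sum_{i}\phi_{i}(r)u_{i}(p)$ with $\phi_{i}\in C^{\infty}_{\mathrm{cs}}((0,\infty))$. Using the explicit form
\begin{align*}
\ol{\Delta}_{\ol{V}}=-\partial^{2}_{rr}-\frac{n-1}{r}\partial_{r}+\frac{1}{r^{2}}\Box_{\wh{V}}
\end{align*}
and integrating by parts in the radial variable (noting $-\partial^{2}_{rr}-\tfrac{n-1}{r}\partial_{r}=-r^{-(n-1)}\partial_{r}(r^{n-1}\partial_{r})$), I would obtain
\begin{align*}
\langle \ol{\Delta}_{\ol{V}}\ol{u},\ol{u}\rangle_{L^{2}(\ol{M})}=\sum_{i}\int_{0}^{\infty}\!\bigl[(\partial_{r}\phi_{i})^{2}r^{n-1}+\nu_{i}\phi_{i}^{2}r^{n-3}\bigr]\,dr.
\end{align*}

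For the \emph{if} direction, assume $\Box_{\wh{V}}\geq -\tfrac{(n-2)^{2}}{4}$, so each $\nu_{i}\geq -\tfrac{(n-2)^{2}}{4}$. Applying Hardy's inequality termwise yields nonnegativity of each summand, hence $\langle \ol{\Delta}_{\ol{V}}\ol{u},\ol{u}\rangle\geq 0$ on the dense subspace of compactly supported smooth sections; the conclusion $\ol{\Delta}_{\ol{V}}\geq 0$ follows by the standard Friedrichs-extension argument together with essential self-adjointness on this core.

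For the \emph{only if} direction, suppose some eigenvalue $\nu_{i_{0}}<-\tfrac{(n-2)^{2}}{4}$, and write $\nu_{i_{0}}=-\tfrac{(n-2)^{2}}{4}-\epsilon$ with $\epsilon>0$. Using that the Hardy constant $\tfrac{(n-2)^{2}}{4}$ is sharp, I can choose a compactly supported $\phi\in C^{\infty}_{\mathrm{cs}}((0,\infty))$ so that $\int(\partial_{r}\phi)^{2}r^{n-1}\,dr<(\tfrac{(n-2)^{2}}{4}+\epsilon)\int\phi^{2}r^{n-3}\,dr$. Then the test section $\ol{u}=\phi\cdot u_{i_{0}}$ produces a strictly negative value of $\langle \ol{\Delta}_{\ol{V}}\ol{u},\ol{u}\rangle$, contradicting $\ol{\Delta}_{\ol{V}}\geq 0$. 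The main obstacle, which is mild here, is making sure the one-dimensional density argument and the integration-by-parts step are justified for sections valued in a vector bundle; this is handled by working in the parallel-transport trivialization along radial lines used already in \eqref{eq:bundle_identification}, where the eigenbasis $\{u_{i}\}$ decouples the radial and tangential parts of the quadratic form.
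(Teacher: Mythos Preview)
Your proposal is correct and follows essentially the same approach as the paper: separation of variables against an eigenbasis of $\Box_{\wh{V}}$, reduction to the one-dimensional quadratic form $\int_0^\infty\bigl[(\partial_r\phi)^2 r^{n-1}+\nu\,\phi^2 r^{n-3}\bigr]\,dr$, and then the (sharp) Hardy inequality. The paper's version is terser---it states the formula for a single separated section and appeals to $H^1$-density of finite linear combinations---while you spell out both directions explicitly; the content is the same.
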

\begin{proof}
	Let  $\ol{u}\in C^{\infty}(\ol{V})$ be of the form
	for $\ol{u}=\varphi(r)u\in C^{\infty}(\ol{V})$ with some $\varphi\in C^{\infty}((0,\infty))$ and an eigensection $u\in E(\Box_{\wh{V}},\nu)$ normalized such that $\left\|u\right\|_{L^2}=1$.
	By  using \eqref{eq:conical_structure} and integrating by parts in~$r$, we can write 
	\begin{align*}
		(\ol{\Delta}_{\ol{V}}\ol{u}_i,\ol{u}_i)_{L^2(\ol{g})}&=\int_0^{\infty}\int_{\wh{M}} (|\partial_r\varphi|^2 |u|^2+r^{-2}\varphi^2\langle \Box_{\wh{V}} u,u\rangle) r^{n-1}\dv_{\wh{g}}dr\\
		&=\int_0^{\infty}(|\partial_r\varphi|^2 +\nu r^{-2}\varphi^2) r^{n-1} dr.
	\end{align*}
	Because the space of finite linear combinations of such sections is $H^1$-dense in $ C^{\infty}(\ol{E})$, the result now follows from the Hardy inequality.
\end{proof}
Using this result, we can now give a simple proof of Theorem~\ref{thm_linear_stability}. %
\label{proof_linear_stability}
\begin{proof}[Proof of Theorem~\ref{thm_linear_stability}]
	Recall from the beginning of Subsection~\ref{subsec:tang_op_forms}, that $\mu_i\geq (n-2)$ for $i\in\N$. Moreover, by the Lichnerowicz--Obata eigenvalue inequality \cite{Ob62}, we get $\lambda_i\geq (n-1)$ for $i\in\N$. 
		 By definition of the numbers $\mu^{(1)}_{i,\pm}$, $\lambda^{(2)}_{i,\pm}$ in Theorem~\ref{thm_spectrum_tangential_LL}, we obtain
	\begin{align*}
		\mu^{(1)}_{i,\pm}&\geq \eta(\xi_+(n-1)-1)=0>	-\frac{(n-2)^2}{4},\\
		\lambda^{(2)}_{i,\pm}&\geq \eta(\xi_+(n-1)-2)=3-n>-\frac{(n-2)^2}{4}.
	\end{align*}
	Therefore, the assertion directly follows from Lemma~\ref{lem:nonnegative_conical_operators} and Theorem~\ref{thm_spectrum_tangential_LL}.
\end{proof}

\subsection{Decay of perturbations satisfying the Bianchi gauge}
In the previous subsection, we have established possible growth and decay rates for homogeneous solutions of the equation $\ol{\Delta}_L\ol{h}=0$. However, not all of them are relevant for our further considerations. Instead, we will later only need to consider the solutions which additionally satisfy the equation
$B_{\ol{g}}(\ol{h})=(\ol{\delta}+\frac{1}{2}d\circ \ol{\trace})(\ol{h})=0$ because by \eqref{eq:linearized_bianchi} and  \eqref{eq:linearized_Ricci}, this implies
\begin{align*}
	\ddtzero \ric_{\ol{g}+t\ol{h}}=0,\qquad 	\ddtzero V(\ol{g}+t\ol{h},\ol{g})=0,
\end{align*}
so Ricci-flatness and the Bianchi gauge are both preserved at a linear level.
For the second main theorem, it turns out that we can also exclude the solutions which are given by Lie derivatives.
Recall that for each eigenvalue $\nu\in\spectrum(\Box_L)$ with $\nu\neq-\frac{(n-2)^2}{4}$ and each eigensection $h\in E(\Box_L,\nu)$, we have two tensors $\ol{h}_{\pm}:=r^{\xi_{\pm}(\nu)}h\in \kernel(\ol{\Delta}_L)$. If $\nu=-\frac{(n-2)^2}{4}$, we use the notation $\ol{h}_+=r^{-\frac{n-2}{2}}\log(r)h$ and $\ol{h}_-=r^{-\frac{n-2}{2}}h$.
Throughout this subsection, we also keep the notation from Theorem~\ref{thm_spectrum_tangential_LL}.
\begin{prop}\label{prop:Bianchi_indicial_values}
	Let $\nu$ be an eigenvalue of $\Box_L$,  $h\in E(\Box_L,\nu)$ and $\ol{h}_{\pm}$ as above. Then, the equation
	\begin{align}\label{eq:linearized_Bianchi_gauge}
		B_{\ol{g}}(\ol{h}_{\pm})=0
	\end{align}
    is satisfied by the following tensors $\ol{h}_{\pm}$:
    \begin{itemize}
    	\item[(i)] If $\nu=\kappa_i$ for some $i\in\N$, then $\ol{h}_{\pm}$ both satisfy \eqref{eq:linearized_Bianchi_gauge}.
    	\item[(ii)] If $\nu=\mu^{(1)}_{i,+}$ for some $i\in\N$, then $\ol{h}_{+}$ solves \eqref{eq:linearized_Bianchi_gauge} but $\ol{h}_{-}$ does not.
    	\item[(iii)] If $\nu=\mu^{(1)}_{i,-}$ for some $i\in\N_0$, then $\ol{h}_{-}$ solves \eqref{eq:linearized_Bianchi_gauge} but $\ol{h}_{+}$ does not.
    	\item[(iv)] If $\nu=\lambda^{(2)}_{i,+}$ for some $i\in\N$, then $\ol{h}_{+}$ solves \eqref{eq:linearized_Bianchi_gauge} but $\ol{h}_{-}$ does not.
    	\item[(v)] If $\nu=\lambda^{(2)}_{i,-}$ for some $i\in\N$, then $\ol{h}_{-}$ solves \eqref{eq:linearized_Bianchi_gauge} but $\ol{h}_{+}$ does not.  
        \item[(vi)] If $\nu=\lambda_{i}$ for some $i\in\N$, then $\ol{h}_{+}$ 
        solves \eqref{eq:linearized_Bianchi_gauge} if and only if $h$ is of the form
        \begin{align*}
	        h=\mathring{\wh{\nabla}}^2v
	        +\frac{\lambda_i(n-2)}{n(n-1)}v\mathring{\wh{g}}-\frac{1}{2}(n-2)\wh{\nabla} v\odot dr+\frac{1}{n}(\xi_+(\lambda_i)-\xi_-(\lambda_i)-2)\xi_-(\lambda_i)v\cdot \ol{g}
        \end{align*}
        and 
        $\ol{h}_{-}$ 
        solves \eqref{eq:linearized_Bianchi_gauge} if and only if $h$ is of the form
        \begin{align*}
        	h=\mathring{\wh{\nabla}}^2v
        	+\frac{\lambda_i(n-2)}{n(n-1)}v\mathring{\wh{g}}-\frac{1}{2}(n-2)\wh{\nabla} v\odot dr+\frac{1}{n}(\xi_-(\lambda_i)-\xi_+(\lambda_i)-2)\xi_+(\lambda_i)v\cdot \ol{g}
        \end{align*}
        for some function $v\in E(\wh{\Delta},\lambda_i)$.
        \item[(vii)] If $\nu=\lambda_{0}=0$, then $\ol{h}_{+}$ solves \eqref{eq:linearized_Bianchi_gauge} but $\ol{h}_{-}$ does not.
        \item[(viii)] If $\nu=\lambda_{0,-}^{(2)}=2n$, then $\ol{h}_{-}$ solves \eqref{eq:linearized_Bianchi_gauge} but $\ol{h}_{+}$ does not.
	\end{itemize}
	Furthermore, only in the cases (i) and (iv), the tensors $\ol{h}_{\pm}$ which solve \eqref{eq:linearized_Bianchi_gauge} are not given by Lie derivatives of the metric $\ol{g}$ along any vector fields.
\end{prop}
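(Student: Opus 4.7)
The central tool is a commutation identity: on a Ricci-flat manifold the second and fourth lines of \eqref{commutation} together with $\ol{\trace} \circ \ol{\Delta}_L = \ol{\Delta}_0 \circ \ol{\trace}$ give
\[
B_{\ol{g}} \circ \ol{\Delta}_L = \ol{\Delta}_1 \circ B_{\ol{g}}.
\]
Hence for every $\ol{h}_\pm \in \kernel(\ol{\Delta}_L)$, the one-form $B_{\ol{g}}(\ol{h}_\pm)$ lies in $\kernel(\ol{\Delta}_1)$. Since $\ol{h}_\pm = r^{\xi_\pm(\nu)} h$ (with $h$ identified via parallel transport), each component of $B_{\ol{g}}(\ol{h}_\pm)$ has fixed radial weight $\xi_\pm(\nu) - 1$, so $B_{\ol{g}}(\ol{h}_\pm)$ must be a radially homogeneous harmonic one-form of that weight. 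Using Proposition~\ref{prop:spectrum_tang_1forms}, we can therefore identify $B_{\ol{g}}(\ol{h}_\pm)$ with one of the explicit eigenforms of $\Box_1$ (or conclude it is zero) by matching the indicial weights $\xi_{\pm}(\nu)-1$ against $\xi_{\pm}(\mu_i+1)$ and $\xi_{\pm}(\lambda_i)\pm 1$.

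First I would dispatch case (i) immediately: for a TT-eigensection $h$ on $\wh{M}$, Lemma~\ref{ttlemma} yields $\ol{\trace}\,\ol{h}_\pm = 0$ and $\ol{\delta}\,\ol{h}_\pm = 0$, so $B_{\ol{g}}(\ol{h}_\pm)=0$ without further work. For cases (ii)--(v), (vii), (viii), I would compute $B_{\ol{g}}(\ol{h}_\pm)$ directly using Lemma~\ref{lem:conical_liediv} (and its evident variant for $v\cdot\ol{g}$), writing each eigensection in the $dr\otimes dr$, $dx^i\odot dr$, $dx^i\otimes dx^j$ block decomposition. The point is that the resulting one-form is necessarily of one of the two types $(\xi_\pm(\lambda_i) v\,dr + dv)\cdot r^{\xi_\pm(\lambda_i)-1}$ or $r^{\xi_\pm(\mu_i+1)}\omega$ from Proposition~\ref{prop:spectrum_tang_1forms}, and the two signs differ by exactly which of these non-zero weights is available at the given radial rate. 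A short linear identification shows that for one sign $B_{\ol{g}}(\ol{h}_\pm)$ is forced to vanish while for the other it is a nonzero harmonic one-form.

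Case (vi) is the algebraically hardest step and is where I would concentrate. The eigenspace is two-parameter: trace-free tensors built from $v$ via $\mathring{\wh{\nabla}}^2$ plus conformal perturbations $w\cdot \ol{g}$. A direct computation yields
\[
B_{\ol{g}}(\ol{h}_\pm) = r^{\xi_\pm(\lambda_i)-1}\bigl[A_\pm(\lambda_i) v + C_\pm(\lambda_i) w\bigr]dr + (\text{tangential part in } dv),
\]
for explicit coefficients $A_\pm, C_\pm$, where the $w$-contribution comes from $B_{\ol{g}}(w\ol{g}) = \tfrac{n-2}{2} dw$ and the $v$-contribution from applying Lemma~\ref{lem:conical_liediv} to the one-form producing the trace-free part. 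Solving the resulting linear equation for $w$ in terms of $v$ gives the explicit coefficient $\tfrac{1}{n}(\xi_\pm(\lambda_i) - \xi_\mp(\lambda_i) - 2)\xi_\mp(\lambda_i)$ stated in the proposition. This is the main obstacle, as it requires bookkeeping both the conformal and trace-free pieces simultaneously.

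Finally, for the Lie derivative statement, cases (ii), (iii), (vi), (vii), (viii) come directly from $\ol{\delta}^*$ applied to the harmonic one-forms in Proposition~\ref{prop:spectrum_tang_1forms}: for instance, $2\,\ol{\delta}^*(r^{\xi_\pm(\mu_i+1)}(r\omega)) = \mathcal{L}_{(\cdot)^\sharp}\ol{g}$ reproduces the eigensections in (ii), (iii), and similarly for the others. In case (i), the eigensection $r^2 h$ is purely tangential and transverse-traceless; by Lemma~\ref{lem:conical_liediv}, any $\ol{\delta}^* X^\flat$ acquires a nontrivial $dx^i \odot dr$-component unless $X^\flat$ is purely radial, and the explicit formula above then shows the resulting tensors cannot be TT and tangential unless they vanish. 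Case (iv) is the subtlest: here $\ol{h}_+^{(iv)}$ is the $\mathring{\ol{\delta}}^*$-image of a harmonic one-form rather than its $\ol{\delta}^*$-image, differing by $\tfrac{1}{n}\ol{\delta}\ol{\omega}\cdot\ol{g}$; a direct check via Lemma~\ref{lem:conical_liediv} shows this extra conformal term cannot itself be written as $\ol{\delta}^*$ of any one-form at the given radial weight, so $\ol{h}_+^{(iv)}$ is not a Lie derivative.
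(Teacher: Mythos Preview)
Your plan for cases (i)--(viii) is essentially the paper's approach: the paper also works case by case, exploiting that most eigensections were constructed in Theorem~\ref{thm_spectrum_tangential_LL} as $\ol{\delta}^*$ or $\mathring{\ol{\delta}}^*$ of explicit one-forms in $\kernel(\ol{\Delta}_1)$, so the Bianchi operator can be evaluated directly. (The paper does not invoke the commutation $B_{\ol{g}}\circ\ol{\Delta}_L=\ol{\Delta}_1\circ B_{\ol{g}}$ as a global organizing principle, but the same mechanism is at work.) For instance, in (ii) the paper uses $\ol{h}_+=\ol{\delta}^*(r^{\xi_+(\mu+1)}\omega)$ together with $\ol{\delta}\circ\ol{\delta}^*=\ol{\Delta}_1$ on divergence-free forms to get $B_{\ol{g}}(\ol{h}_+)=0$ immediately, then writes $\ol{h}_-=r^{2-n}\ol{h}_+$ and computes the failure directly. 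Your treatment of case (vi) matches the paper's computation.

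Your Lie-derivative argument, however, contains a genuine error. In case (iv) you claim $\ol{h}_+^{(\mathrm{iv})}=\mathring{\ol{\delta}}^*\ol{\omega}$ differs from $\ol{\delta}^*\ol{\omega}$ by a nonzero conformal term $\tfrac{1}{n}\ol{\delta}\ol{\omega}\cdot\ol{g}$. But here $\ol{\omega}=d(r^{\xi_+(\lambda)}v)$ with $r^{\xi_+(\lambda)}v\in\kernel(\ol{\Delta})$, so $\ol{\delta}\ol{\omega}=0$ and in fact
\[
\ol{h}_+^{(\mathrm{iv})}=\ol{\nabla}^2(r^{\xi_+(\lambda)}v)=\ol{\delta}^*\bigl(d(r^{\xi_+(\lambda)}v)\bigr)
\]
\emph{is} a Lie derivative. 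The proposition as printed contains a typo: ``(iv)'' in the final sentence should read ``(vi)'', consistently with Theorem~\ref{mainthm:indicial_roots_LL}(iii), where the non-Lie-derivative indicial set $E=\{\xi_\pm(\kappa_i),\xi_\pm(\lambda_i)\}$ corresponds to cases (i) and (vi). Accordingly, your list of Lie-derivative cases should be (ii)--(v), (vii), (viii) (as in the paper's proof), not (ii), (iii), (vi), (vii), (viii). It is precisely case (vi), not (iv), where your $\mathring{\ol{\delta}}^*$-versus-$\ol{\delta}^*$ idea becomes relevant: there the one-form $\ol{\omega}=r^{\xi_+(\lambda)+1}(\xi_-(\lambda)v\,dr+r\,dv)$ has $\ol{\delta}\ol{\omega}\neq 0$, and the resulting conformal piece combined with the $w\cdot\ol{g}$ term is what prevents the gauged solution from being a pure Lie derivative.
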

\begin{proof}
	The tensors $\ol{h}_{\pm}$ were constructed in \eqref{eq:h_pm} in the proof of Theorem~\ref{thm_spectrum_tangential_LL} by eigensections $v\in C^{\infty}(\wh{M})$, $\omega\in C^{\infty}(T^*\wh{M})$ and $h\in C^{\infty}(TT(\wh{M}))$. If two different tensors $\ol{h}_{\pm}$, $\ol{k}_{\pm}$
	 are constructed out of such, the 1-forms
	\begin{align*}
		B_{\ol{g}}(\ol{h}_{\pm}),\qquad 	B_{\ol{g}}(\ol{k}_{\pm})
	\end{align*}
	are linearly independent unless these both tensors are constructed from the same eigensection and have the same growth rate. Therefore, we can check the main assertion in each of the cases (i)-(viii) separately.
	Throughout the proof, we use the formulas found in the proof of Theorem~\ref{thm_spectrum_tangential_LL}.
	\\
	\begin{enumerate}
		\item[(i)] If $h\in  E(\Box_L,\kappa_i)$, then $h$ is a TT-tensor on $\wh{M}$ and $\ol{h}_{\pm}$ are both TT-tensors on $\ol{M}$ by Lemma~\ref{ttlemma} and thus solve \eqref{eq:linearized_Bianchi_gauge}.
		\item[(ii)] If $h\in  E(\Box_L,\mu^{(1)}_{+})$, then $h=\wh{\delta}^*\omega+\frac{1}{2}(\xi_{\pm}(\mu_i+1)-1)\omega\odot dr $, where $\omega\in E(\wh{\Delta}_1,\mu)=E(\Box_1,\mu+1)$. Furthermore, we have
		\begin{align*}
			\ol{h}_{+}&=r^{\xi_{+}(\mu)}	(\wh{\delta}^*\omega+\frac{1}{2}(\xi_{+}(\mu+1)-1)\omega\odot dr )=\ol{\delta}^*(r^{\xi_{+}(\mu+1)}\omega).
		\end{align*}
		Because $\wh{\delta}\omega=0$, $\ol{h}_{+}$ is trace free. Because $\omega\in E(\Box_1,\mu+1)$, we have $r^{\xi_{+}(\mu+1)}\omega\in\ker(\ol{\Delta}_1)$. Therefore, 
		\begin{align*}
			\ol{\delta}\ol{h}_{+}=\ol{\delta}(\ol{\delta}^*(r^{\xi_{+}(\mu+1)}\omega))=\ol{\Delta}_1(r^{\xi_{+}(\mu+1)}\omega)=0,
		\end{align*}
		which implies that $\ol{\delta}\ol{h}_{+}$ is a TT-tensor. On the other hand, the tensor
		\begin{align*}
			\ol{\delta}\ol{h}_{-}=r^{\xi_{-}(\mu)}h=r^{2-n-\xi_{+}(\mu)}h=r^{2-n}\ol{\delta}\ol{h}_{+}
		\end{align*}
		is still trace free, but not divergence free, as
		\begin{align*}
			\ol{\delta}\ol{h}_{-}=\ol{\delta}(r^{2-n}\ol{h}_{+})=
			r^{2-n}\ol{\delta}(\ol{h}_{+})+(2-n)r^{1-n}\ol{h}_{+}(dr,.)
			=\frac{2-n}{2}(\xi_{+}(\mu+1)-1)r^{1-n}\omega.
		\end{align*}
		In particular, we have
		\begin{align*}
			(\ol{\delta}+\frac{1}{2}d\circ \ol{\trace})(\ol{h}_{-})=\ol{\delta}(\ol{h}_{-})\neq0.
		\end{align*}
		\item[(iii)] This is completely analogous to (ii), with the positions of $+$ and $-$ interchanged throughout the proof.
		\item[(iv)]  If $h\in  E(\Box_L,\lambda^{(2)}_{+})$, then $h=\mathring{\wh{\nabla}}^2v-\left(\frac{\lambda}{n-1}-\xi_{+}(\lambda)\right) v\mathring{\wh{g}}+(\xi_{+}(\lambda)-1)\wh{\nabla} v\odot dr $, where $v\in E(\wh{\Delta},\lambda)$. Furthermore, we have
		\begin{align*}
			\ol{h}_{+}	&=r^{\xi_{+}(\lambda^{(2)}_{+})}\left(\mathring{\wh{\nabla}}^2v-\left(\frac{\lambda}{n-1}-\xi_{+}(\lambda)\right) v\mathring{\wh{g}}+(\xi_{+}(\lambda)-1)\wh{\nabla} v\odot dr\right)\\
			&=\mathring{\ol{\nabla}}^2(r^{\xi_{+}(\lambda)}v)={\ol{\nabla}}^2(r^{\xi_{+}(\lambda)}v)=\ol{\delta}^*(d(r^{\xi_{+}(\lambda)}v)).
		\end{align*}
		Because $v\in E(\wh{\Delta},\lambda)$, we have $r^{\xi_{+}(\lambda)}v\in\ker(\ol{\Delta})$. Therefore, 
		\begin{align*}
			\ol{\delta}\ol{h}_{+}
			=	\ol{\delta}(\mathring{\ol{\nabla}^2}(r^{\xi_{+}(\lambda)}v))=\left(1-\frac{1}{n}\right)\ol{\Delta}_1(\ol{\nabla}(r^{\xi_{+}(\lambda)}v))=0,
		\end{align*}
		which implies that $\ol{\delta}\ol{h}_{+}$ is a TT-tensor  and thus solves \eqref{eq:linearized_Bianchi_gauge}.
		On the other hand, the tensor
		\begin{align*}
			\ol{\delta}\ol{h}_{-}=r^{\xi_{-}(\lambda^{(2)}_{+})}h=r^{2-n-\xi_{+}(\lambda^{(2)}_{+})}h=r^{2-n}\ol{\delta}\ol{h}_{+}
		\end{align*}
		is still trace free, but not divergence free, as
		\begin{align*}
			\ol{\delta}\ol{h}_{-}&=\ol{\delta}(r^{2-n}\ol{h}_{+})=
			r^{2-n}\ol{\delta}(\ol{h}_{+})+(2-n)r^{1-n}\ol{h}_{+}(dr,.)\\
			&=(2-n)(\lambda-(n-1)\xi_+(\lambda))\lambda vdr+	
			(2-n)(\xi_{+}(\lambda)-1)r^{1-n}\wh{\nabla} v.
		\end{align*}
		In particular, we have
		\begin{align*}
			B_{\ol{g}}(\ol{h}_{-})=\ol{\delta}(\ol{h}_{-})\neq0.
		\end{align*}
		\item[(v)] This is completely analogous to (iv), with the positions of $+$ and $-$ interchanged throughout the proof.
		\item[(vi)] If $h\in  E(\Box_L,\lambda)$, then 
		\begin{align*}
			h=\mathring{\wh{\nabla}}^2v
			+\frac{\lambda(n-2)}{n(n-1)}v\mathring{\wh{g}}-\frac{1}{2}(n-2)\wh{\nabla} v\odot dr+w\cdot \ol{g},
			\end{align*}
		with $v,w \in E(\wh{\Delta},\lambda) $. Let us set $w=0$ for the moment. Then we have
		\begin{align*}
			\ol{h}_{+}&=r^{\xi_+(\lambda)}\left(\mathring{\wh{\nabla}}^2v
			+\frac{\lambda(n-2)}{n(n-1)}v\mathring{\wh{g}}-\frac{1}{2}(n-2)\wh{\nabla} v\odot dr\right)\\
			&=\mathring{\ol{\delta}^*}(r^{\xi_+(\lambda_i)+1}(\xi_-(\lambda)vdr+rdv)).
		\end{align*}
		Observe that $\ol{h}_{+}$ is trace free.
		Let us now abbreviate $\ol{\omega}=r^{\xi_+(\lambda)+1}(\xi_-(\lambda)vdr+rdv)$ and recall that $\ol{\Delta}_1(\ol{\omega})=0$.
		 Then we get
		\begin{align*}
			B_{\ol{g}}\ol{\omega}=\ol{\delta}\mathring{\ol{\delta}^*}\ol{\omega}
			=\frac{1}{2}\ol{\Delta}_1\ol{\omega}+\left(\frac{1}{2}-\frac{1}{n}\right)\ol{\nabla}\ol{\delta}\ol{\omega}=
			\frac{n-2}{2n}d\ol{\delta}\ol{\omega}.
		\end{align*}
		We furthermore compute
		\begin{align*}
			\ol{\delta}(\ol{\omega})&=\ol{\delta}(r^{\xi_+(\lambda)-\xi_-(\lambda)+2}r^{\xi_-(\lambda)-1}(\xi_-(\lambda)vdr+rdv))	\\
			&=\ol{\delta}(r^{\xi_+(\lambda)-\xi_-(\lambda)+2}d(r^{\xi_-(\lambda)}v))\\
			&=-\langle d(r^{\xi_+(\lambda)-\xi_-(\lambda)+2}),d(r^{\xi_-(\lambda)}v)\rangle_{\ol{g}}+r^{\xi_+(\lambda)-\xi_-(\lambda)+2}\ol{\Delta}(r^{\xi_-(\lambda)}v)\\
			&=(\xi_-(\lambda)-\xi_+(\lambda)+2)\xi_-(\lambda)r^{\xi_+(\lambda)}v
		\end{align*}
		and consequently
		\begin{align*}
			B_{\ol{g}}(\ol{h}_{+})&=\frac{n-2}{2n}d\ol{\delta}(\ol{\omega})=\frac{n-2}{2n}(\xi_-(\lambda)-\xi_+(\lambda)+2)\xi_-(\lambda)d(r^{\xi_+(\lambda)}v).
		\end{align*}
		On the other hand, if $v=0$, $\ol{h}_{+}=(r^{\xi_+(\lambda)}w)\cdot \ol{g}$ and
		\begin{align*}
			B_{\ol{g}}(\ol{h}_{+})&=\left(-1+\frac{n}{2}\right)d(r^{\xi_+(\lambda)}w)=\frac{n-2}{2}d(r^{\xi_+(\lambda)}w).	
		\end{align*}
		Now we assume that both $v$ and $w$ are nonzero. From summing up the two subcases we considered before, we see that 
		\begin{align*}
			B_{\ol{g}}(\ol{h}_{+})=0	
		\end{align*}
		if and only if
		\begin{align*}
			nw=(\xi_+(\lambda)-\xi_-(\lambda)-2)\xi_-(\lambda)v
		\end{align*}
		The discussion for $\ol{h}_{-}$ is completely analogous, with interchanging the positions of $+$ and $-$ everywhere in the calculation.
		\item[(vii)] If $h\in  E(\Box_L,\lambda_0=0)$, then $\ol{h}_{+}=\alpha\cdot \ol{g}$ and $\ol{h}_{-}=\alpha r^{2-n}\ol{g}$, where $\alpha\in\R$. In the first case, $\ol{h}_{+}$ is parallel and hence solves \eqref{eq:linearized_Bianchi_gauge} whereas in the second case
		\begin{align*}
			B_{\ol{g}}(\ol{h}_{-})=\alpha\frac{n-2}{2}	d(r^{2-n})=\frac{\alpha}{2}(n-2)^2r^{1-n}dr\neq0.
		\end{align*}
		\item[(viii)] If $h\in  E(\Box_L,\lambda_{0_-}=2n)$, then $h=\alpha\cdot (n-2)\cdot ((n-1)dr\otimes dr-\wh{g})$ for some $\alpha\in\R$ and
		\begin{align*}
			\ol{h}_{-}=\alpha\mathring{\ol{\delta}^*}((2-n)r^{1-n}dr)=\alpha \mathring{\ol{\nabla}}^2r^{2-n}=\alpha {\ol{\nabla}}^2r^{2-n}=\alpha {\ol{\delta}}^*(d(r^{2-n})).
		\end{align*}
		Because $r^{2-n}$ is a harmonic function, we easily conclude
		\begin{align*}
			\ol{\delta}( \mathring{\ol{\nabla}}^2r^{2-n})=\ol{\delta}({\ol{\nabla}}^2r^{2-n})=\ol{\Delta}_1 d(r^{2-n})=0
		\end{align*}
		and so, $\ol{h}_{-}$ solves \eqref{eq:linearized_Bianchi_gauge} because it is a TT-tensor. On the other hand, $\ol{h}_{+}$ is trace free but not divergence free, since
		\begin{align*}
			\ol{h}_{+}=r^{n+2}	\ol{h}_{-},
		\end{align*}
		which implies 
		\begin{align*}
			\ol{\delta}(\ol{h}_{+})=\ol{\delta}(r^{n+2}	\ol{h}_{-})=(n+2)r^{n+1}	\ol{h}_{-}(dr)+r^{n+2}\ol{\delta}\ol{h}_{-}=\alpha(n+2)(n-2)(n-1)rdr\neq 0.
		\end{align*}
	\end{enumerate}
	To finish the proof, we recall that the cases (ii)-(v) and (viii), the tensors $\ol{h}_{\pm}$ which are in the kernel of $B_{\ol{g}}$ are of the form $\ol{h}_{\pm}=\ol{\delta}^*(\ol{\omega}_{\pm})=\frac{1}{2}\mathcal{L}_{(\ol{\omega}_{\pm})^{\sharp}}\ol{g}$. This is also the case for the metric (case (vii)), since $\ol{g}=\ol{\delta}^*(rdr)=\frac{1}{2}\mathcal{L}_{r\partial_r}\ol{g}$.
\end{proof}
\noindent Let us label the indicial set of $\ol{\Delta}_L$ by
\begin{align*}
	E_L&:=\SetDefine{\xi_{\pm}(\kappa_i),\xi_{\pm}(\mu_i+1)- 1,\xi_{\pm}(\mu_i+1)+ 1
	,\xi_{\pm}(\lambda_i)-2,\xi_{\pm}(\lambda_i),\xi_{\pm}(\lambda_i)+2
	}{i\in\N}\\
	&\qquad\cup\SetDefine{-n,2-n,0,2}{}
\end{align*}
and recall from Corollary~\ref{cor:decay_kernel_LL} the notation
\begin{align*}
	\xi^L_+:=\min \mathrm{Re}(E_L)\cap(0,\infty),\qquad \xi^L_-:=\min \mathrm{Re}(-E_L)\cap(0,\infty).
\end{align*}
We denote the indicial set of $\ol{\Delta}_L$ on tensors satisfying the linearized Bianchi gauge by
\begin{align*}
	E_B=\SetDefine{\xi_{\pm}(\kappa_i),\xi_{\pm}(\mu_i+1)- 1
	,\xi_{\pm}(\lambda_i)-2,\xi_{\pm}(\lambda_i)
	}{i\in\N}\cup\SetDefine{-n,0}{}
\end{align*}
and set
\begin{align*}
	\xi^B_+:=\min \mathrm{Re}(E_B)\cap(0,\infty),\qquad \xi^B_-:=\min \mathrm{Re}(-E_B)\cap(0,\infty).
\end{align*}
As in the introduction, we label the subset of $E_B$ not corresponding to Lie derivatives by
\begin{align*}
	E:=\SetDefine{\xi_{\pm}(\kappa_i),\xi_{\pm}(\lambda_i)
	}{i\in\N}\cup\SetDefine{0}{}.
\end{align*}
and we set
\begin{align*}
	\xi_+:=\min \mathrm{Re}(E)\cap(0,\infty),\qquad \xi_-:=\min \mathrm{Re}(-E)\cap(0,\infty).
\end{align*}
Obviously, we have
\begin{align*}
	\xi_+\geq 	\xi^B_+\geq \xi^L_+,\qquad 	\xi_-\geq 	\xi^B_-\geq \xi^L_-.
\end{align*}
\begin{cor}\label{cor:decay_gauged_kernel_LL} 
	Let
	$\ol{h}\in \mathrm{ker}(\ol{\Delta}_{L})\cap \kernel(B_{\ol{g}})$, not necessarily defined on all of $\ol{M}$. Then we have the following:
	\begin{itemize}
		\item[(i)] If $\ol{h}$ is defined on $(0,\epsilon)\times \wh{M}$ for some $\epsilon>0$ and $|\ol{h}|\to 0$ as $r\to 0$, then $\ol{h}=\O_{\infty}(r^{\xi^B_+})$ as $r\to0$.
		\item[(ii)]	If $\ol{h}$ is defined on $(R,\infty)\times \wh{M}$ for some $R>0$ and $|\ol{h}|\to 0$ as $r\to\infty $, then 
		we divide into two subcases:
		\begin{itemize}
			\item[(iia)] If $\NotResonanceDominated{\Box_L}$, then $\ol{h}=\O_{\infty}(r^{-\xi^B_-})$  as $r\to\infty $.
			\item[(iib)] If $\ResonanceDominated{\Box_L}$, then $\ol{h}=\O_{\infty}(r^{-\frac{n-2}{2}}\log(r))$  as $r\to\infty $.
		\end{itemize}
	\end{itemize}
\end{cor}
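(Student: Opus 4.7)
The plan mirrors the proof of Corollary~\ref{cor:decay_kernel_LL}, refining the set of admissible indicial roots from $E_L$ to $E_B$ by using the Bianchi constraint via Proposition~\ref{prop:Bianchi_indicial_values}. First, I would expand $\ol{h}$ in the $L^2$-basis of eigensections of $\Box_L$ provided by Theorem~\ref{thm_spectrum_tangential_LL}. Since $\ol{\Delta}_L$ is a conical operator, separation of variables yields on the subinterval where $\ol{h}$ is defined an expansion
\begin{align*}
\ol{h}(r,\cdot)=\sum_{\substack{j\\ \nu_j\neq-\frac{(n-2)^2}{4}}}\bigl(a_j r^{\xi_+(\nu_j)}+b_j r^{\xi_-(\nu_j)}\bigr)h_j+\sum_{\substack{j\\ \nu_j=-\frac{(n-2)^2}{4}}}\bigl(a_j r^{-\frac{n-2}{2}}+b_j r^{-\frac{n-2}{2}}\log r\bigr)h_j,
\end{align*}
completely analogous to \eqref{eq:harmonic_expansion}, where $\{h_j\}$ is an orthonormal basis of eigensections with $\Box_L h_j=\nu_j h_j$. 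Termwise convergence in every $C^k$-norm on compact radial subintervals follows from elliptic regularity on $\wh{M}$ in the usual way.

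The next step is to impose $B_{\ol{g}}(\ol{h})=0$. Because $B_{\ol{g}}$ is a first-order operator whose coefficients scale homogeneously in $r$, each summand $r^{\xi_\pm(\nu_j)}h_j$ is mapped to an $r$-homogeneous $1$-form whose angular part is determined by $h_j$. By the commutation relations \eqref{commutation}, images of distinct $\Box_L$-eigenpackets land in $L^2$-orthogonal $\Box_1$-eigenspaces on each slice $\{r\}\times\wh{M}$; combined with the linear independence of $r^\alpha$ and $r^\alpha\log r$, this decouples the equation $B_{\ol{g}}(\ol{h})=0$ into a condition on each individual eigenpacket. Proposition~\ref{prop:Bianchi_indicial_values} then identifies precisely which of these conditions are solvable and hence which indicial roots survive in the expansion of $\ol{h}$: they are exactly the elements of $E_B$. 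This mode-separation is the main technical input; the hardest point is to ensure that in case (vi) of the proposition, where the gauge constraint couples the scalar $v\cdot\ol{g}$-block to the trace-free block within a single $\lambda_i$-packet, the bookkeeping of admissible exponents still produces $\xi_\pm(\lambda_i)$ (not new values), which is exactly what the proposition asserts.

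With only $E_B$-indicial roots remaining, the decay hypothesis closes the argument. If $|\ol{h}|\to0$ as $r\to0$, only terms with strictly positive real part can occur, so by definition of $\xi^B_+$ we obtain $\ol{h}=\O_\infty(r^{\xi^B_+})$, the higher-derivative bounds following from termwise differentiation and elliptic regularity. For $r\to\infty$ with $|\ol{h}|\to0$, the opposite sign is selected. When $\NotResonanceDominated{\Box_L}$, the resonance log-mode is either absent or strictly dominated at infinity by a slower-decaying power mode, and one reads off $\ol{h}=\O_\infty(r^{-\xi^B_-})$ directly from $E_B$. When $\ResonanceDominated{\Box_L}$, the resonance $-\tfrac{(n-2)^2}{4}$ is the unique eigenvalue of $\Box_L$ in $[-\tfrac{(n-2)^2}{4},0)$, every other negative-exponent mode decays strictly faster than $r^{-\frac{n-2}{2}}$, and the surviving $\log$ term yields the announced dominant rate $r^{-\frac{n-2}{2}}\log r$.
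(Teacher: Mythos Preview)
Your proposal is correct and follows exactly the approach the paper intends: the corollary is stated without proof immediately after Proposition~\ref{prop:Bianchi_indicial_values}, as a direct consequence of combining the harmonic expansion \eqref{eq:harmonic_expansion} with that proposition's case-by-case identification of which $r$-homogeneous modes survive the Bianchi constraint. Your decoupling argument is the one implicit in the first paragraph of the proof of Proposition~\ref{prop:Bianchi_indicial_values} (linear independence of the $B_{\ol{g}}$-images of distinct eigenmodes), and your handling of case~(vi) and of the resonance log-term is accurate.
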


\section{Optimal coordinates for conifolds}\label{sec:coordinates}

\subsection{Analysis on conifolds}\label{subsec:analysis_conifolds}
In order to do analysis of partial differential equations on conifolds, one has to work with weighted function spaces. In the following we recall some well-known facts for weighted Sobolev and H\"{o}lder spaces and Laplace type operators on conifolds which can be found in many articles and textbooks, see e.g.\ \cite{Bar86,LM85,Mel93,Pac13}.
\begin{defn}
	Let $(M,g)$ be a conifold. A  smooth function  $\rho:M\to(0,\infty)$ is called a \emph{radius function} if at each end $M_i$, there exists an asymptotic chart $\varphi_i$ such that $(\varphi_i)_*\rho=r$.
\end{defn}
For simplicity, we assume that $M$ has only one end $M\setminus K$ which is either asymptotically conical or conically singular.
Moreover, for fixed $R\in\R$, we will use the notation 
\begin{equation}\label{eq:sublevel-set-of-the-radius-function}
	M_{<R}:=\SetDefine{p\in M}{\rho(p)<R},
\end{equation}
and similarly with $M_{>R}:=\SetDefine{p\in M}{\rho(p)>R}$.
On a cone $\ol{M}$, we pick the function $r$ as radius function and use the notation $\ol{M}_{<R}$ etc. accordingly (cf. Remark~\ref{rem:multiple ends}.)

For $p\in[1,\infty)$ and a weight $\beta\in \mathbb{R}$, we define the space  $L^2_{\beta}(M)$ as the closure of $C^{\infty}_{\text{cs}}(M)$ with respect to the norm
\begin{align*}
	\left\|u\right\|_{L^2_{\beta}}=\left(\int_M |\rho^{-\beta}u|^2\rho^{-n}d\mu\right)^{1/2},
\end{align*}
and the weighted Sobolev spaces $H^{k}_{\beta}(M)$ as the closure of $C^{\infty}_{\text{cs}}(M)$ under
\begin{align*}
	\left\|u\right\|_{H^{k}_{\beta}}=\sum_{l=0}^k\left\|\nabla^lu\right\|_{L^{2}_{\beta-l}}.
\end{align*}
The weighted H\"{o}lder spaces are defined as the set of maps $u\in C^{k,\alpha}_{\text{loc}}(M)$, $\alpha\in(0,1)$ such that the norm
\begin{align*}
	\left\|u\right\|_{C^{k,\alpha}_{\beta}}=&\sum_{l=0}^k\sup_{x\in M} \rho^{-\beta+l}(x)|\nabla^lu(x)|\\&\quad+\sup_{\substack{x, y\in M\\ 0<d(x,y)<\mathrm{inj}(M)}}\min\SetDefine{\rho^{-\beta+k+\alpha}(x),\rho^{-\beta+k+\alpha}(y)}{}\frac{|\tau^y_x\nabla^ku(x)-\nabla^k u(y)|}{d(x,y)^\alpha},
\end{align*}
is finite. Here $\tau_x^y$ denotes the parallel transport from $x$ to $y$ along the shortest geodesic joining $x$ and $y$, and $d(x,y)$ is the Riemannian distance between $x$ and $y$. All these spaces are Banach spaces, the spaces $H^k_{\beta}(M)$ are Hilbert spaces and their underlying topological vector space structures do not depend on the choice of the radius function $\rho$. All these definitions extend to Riemannian vector bundles with a metric connection in an obvious manner \cite{Pac13}.
In the literature, there are different notational conventions for weighted spaces. We follow the more standard convention used in \cite{Bar86,Pac13}.
We have, for every $\beta\in\R$, $k,l\in\N_0$ and $\alpha\in(0,1)$, the obvious embeddings 
\begin{align*}
	H^{k+l}_{\beta}\subset	H^k_{\beta},\qquad C^{k+1,\alpha}_{\beta}\subset C^{k,\alpha}_{\beta}.
\end{align*}
In the asymptotically conical case, we further have the embeddings
\begin{align*}
	H^k_{\beta}\subset H^k_{\beta'},\qquad  C^{k,\alpha}_{\beta}\subset C^{k,\alpha}_{\beta'},\qquad C^{k,\alpha}_{\beta}\subset H^k_{\beta'}	
\end{align*}
for any $\beta<\beta'$, the Sobolev embedding
\begin{align*}
	H^{k+l}_{\beta}\subset	C^{k,\alpha}_{\beta'}
\end{align*}
for $l>n/2$ and  $\beta<\beta'$,
and pointwise multiplication extends to a continuous map
\begin{align*}
	\cdot:H^k_{\beta}	\times H^k_{\beta}\to H^k_{\beta}
\end{align*}
for $\beta\leq0$.
 Conversely, in the conically singular case, we have 
\begin{align*}
	H^k_{\beta}\subset H^k_{\beta'},\qquad  C^{k,\alpha}_{\beta}\subset C^{k,\alpha}_{\beta'},\qquad C^{k,\alpha}_{\beta}\subset H^k_{\beta'}	
\end{align*}
for any $\beta'<\beta$,
the Sobolev embedding
\begin{align*}
	H^{k+l}_{\beta}\subset	C^{k,\alpha}_{\beta'}
\end{align*}
for $l>n/2$ and  $\beta'<\beta$,
and pointwise multiplication extends to a continuous map
\begin{align*}
	\cdot:H^k_{\beta}	\times H^k_{\beta}\to H^k_{\beta}
\end{align*}
for $\beta\geq0$.
\begin{rem}\label{rem:multiple ends}
	If $M$ is a conifold with multiple ends which might be AC as well as CS, these function spaces are generalized as follows: We index the  AC (\enquote{large}) ends by $\SetDefine{1,\ldots,a}{}$ and the CS (\enquote{small}) ends by $\SetDefine{1,\ldots,b}{}$. Consider the vector $\beta=(\zeta,\sigma)=(\zeta_1,\ldots,\zeta_a,\sigma_1,\ldots,\sigma_b)\in  \R^{a+b}$, where each $\zeta_i$ refers to an AC end and each $\sigma_j$ refers to a CS end. We write $\zeta\leq\zeta'$ (resp. $ \geq,<,> $) whenever $\zeta_i\leq \zeta'_i$ (resp. $ \geq,<,> $) for each $i$, and similarly for $\sigma$. For $i\in\Z$, we write $\beta+i:=(\zeta+i,\sigma+i):=(\zeta_1+i,\ldots,\zeta_a+i,\sigma_1+i,\ldots,\sigma_b+i)$. Given $\beta=(\zeta,\sigma)\in \R^{a+b}$, choose a smooth function on $M$ (again denoted by $\beta:M\to\R$) which at each AC end $M_i$ coincides with $\zeta_i$ and at each CS end $M_j$ coincides with $\sigma_j$. Then we can define the weighted Sobolev spaces $H^k_{\beta}=H^k_{\zeta,\sigma}$ and the weighted H\"{o}lder spaces $C^{k,\alpha}_{\beta}=C^{k,\alpha}_{\zeta,\sigma}$ exactly as above. The embedding properties from above hold for $\beta=(\zeta,\sigma),\beta'=(\zeta',\sigma')\in \R^{a+b}$ with $\zeta<\zeta'$ and $\sigma'<\sigma$ and the multiplication properties for $\zeta\leq 0$ and $\sigma\geq0$.
\end{rem}
\begin{defn}
	A Laplace type operator $\Delta_V$ on a Riemannian vector bundle $V$ with connection over an asymptotically conical/conically singular manifold $M$ is called \emph{asymptotically conical/conically singular} of order $\alpha$ if there exists a Riemannian vector bundle $\ol{V}$ with connection over $(R,\infty)\times \wh{M}$ (resp.\ $(0,\epsilon)\times \wh{M}$), a conical operator $\ol{\Delta}_{\ol{V}}$, a vector bundle isomorphism $\Phi:V|_{M\setminus K}\to \ol{V}$ covering an asymptotic chart $\phi$ and a constant $\alpha>0$, such that for any section $u=\mathcal{O}_{\infty}(r^{\beta})$, $\beta\in\R$, we have
	\begin{align*}
		\ol{\nabla}^k (\ol{\Delta}_{\ol{V}}\Phi(u)-\Phi(\Delta_V u))=\mathcal{O}_{\infty}(r^{\beta-\alpha-2-k})\qquad\text{for all }k\in\N_0
	\end{align*}
	as $r\to\infty$ in the AC case, and
	\begin{align*}
		\ol{\nabla}^k (\ol{\Delta}_{\ol{V}}\Phi(u)-\Phi(\Delta_V u))=\mathcal{O}_{\infty}(r^{\beta+\alpha-2-k})\qquad\text{for all }k\in\N_0
	\end{align*}
	as $r\to0$ in the CS case. A Laplace type operator over a conifold is called a \emph{conifold operator} if it is  asymptotically conical and conically singular at each asymptotically conical and conically singular end, respectively.
\end{defn}
\noindent For any $\beta\in\R$, a conifold operator $\Delta_V$ defines a continuous map
\begin{align}\label{eq:Laplacian_weighted_Sobolev}
	\Delta_V:W^{k,p}_\beta(V)\to W^{k-2,p}_{\beta-2}(V).
\end{align} 
Additionally, it is Fredholm for all $\beta\in\R$ up to a discrete set, see e.g.\ \cite[Proposition~5.64]{Mel93}: Let
\begin{align*}
	D=\SetDefine{\mathrm{Re}(\xi_{\pm}(\nu))}{\nu\in \spectrum(\Box_V)},
\end{align*} 
where $\Box_V$ is the tangential operator of the conical operator $\ol{\Delta}_{\ol{V}}$. In other words $D$ is the set of the real part of the indicial roots of $\ol{\Delta}_{\ol{V}}$.
We call $\beta$ \emph{nonexceptional} whenever $\beta\in\R\setminus D$ and \emph{exceptional}, whenever $\beta\in D$.
It is a standard fact that the operator \eqref{eq:Laplacian_weighted_Sobolev} is Fredholm if and only if $\beta$ is nonexceptional. An analogous statement holds for weighted H\"{o}lder spaces but we will not need it for our purposes.

To conclude this short introduction, we mention an important duality argument.
Note that the $L^2$-scalar product induces a bounded map
\begin{align*}
	(.,.): L^2_{\beta}\times L^2_{-n-\beta}\to\R
\end{align*}
consequently we can think of $L^2_{-n-\beta}$ as being the dual space of $L^2_{\beta}$. Motivated by this, we define
\begin{align*}
	H^{-k}_{\beta}:=(H^{k}_{-n-\beta})',\qquad k\in\N.
\end{align*}
If $\Delta_V$ is formally self-adjoint, the dual map of
\begin{align*}
		\Delta_V:H^{k}_\beta(V)\to W^{k-2}_{\beta-2}(V)
\end{align*}
is, via the $L^2$-pairing, again given by $\Delta_V$,  seen as a map
\begin{align*}
	\Delta_V:H^{2-k}_{2-n-\beta}(V)\to H^{-k}_{-n-\beta}(V).
\end{align*}
Recall that a Fredholm map is surjective if its dual map is injective. Therefore,
\begin{align*}
	\Delta_V:H^{k}_\beta(V)\to H^{k-2}_{\beta-2}(V)
\end{align*}
is surjective if and only if 
\begin{align*}
	\mathrm{ker}_{H^{2-k,q}_{2-n-\beta}}(\Delta_V)=0.
\end{align*}
However, by elliptic regularity for weighted spaces, 
\begin{align*}
	\mathrm{ker}_{H^{2-k}_{2-n-\beta}}(\Delta_V)=\mathrm{ker}_{H^{l}_{2-n-\beta}}(\Delta_V)
\end{align*}
for every $l\in\N$. This gives us a useful criterion for checking isomorphism properties for $\Delta_V$.

For example, consider the conical operator $\ol{\Delta}_{\ol{V}}$ over a Riemannian cone $(\ol{M},\ol{g})$. 
In view of Remark~\ref{rem:multiple ends}, we may think of $\ol{M}$ as a conifold with one AC and one CS end and choose the same weight $\beta$ on both ends.
We may choose $\rho=r$ as the radius function.
By \eqref{eq:harmonic_expansion}, an element $u\in\kernel(\ol{\Delta}_{\ol{V}})$ defined on all of $\ol{M}$ can not satisfy $u\in o(r^{\alpha})$ both as $r\to 0$ and $r\to \infty$ for any $\alpha\in \R$. For this reason,
\begin{align*}
	\ol{\Delta}_{\ol{V}}:H^{k}_\beta(\ol{V})\to H^{k-2}_{\beta-2}(\ol{V})
\end{align*}
is injective for all weights $\beta$. By the duality argument above, $\ol{\Delta}_{\ol{V}}$ is also surjective for all nonexceptional weights $\beta$.
Summing up, we conclude:
\begin{prop}\label{prop:conical_isomorphism}
	A conical operator $\ol{\Delta}_{\ol{V}}$ over a Riemannian cone $(\ol{M},\ol{g})$, seen as an operator
	\begin{align*}
		\ol{\Delta}_{\ol{V}}:H^{k}_\beta(V)\to H^{k-2}_{\beta-2}(V)
	\end{align*}	
	 is an isomorphism for every nonexceptional weight $\beta$.
\end{prop}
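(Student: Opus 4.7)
The Fredholm property is already provided by the theory cited above (\cite[Proposition~5.64]{Mel93}): for a nonexceptional weight $\beta$, the map
\begin{align*}
	\ol{\Delta}_{\ol{V}}\colon H^{k}_\beta(\ol V)\to H^{k-2}_{\beta-2}(\ol V)
\end{align*}
is Fredholm. It therefore suffices to establish injectivity, and then surjectivity will follow from duality. The key algebraic input is the identity $\xi_+(\nu)+\xi_-(\nu)=2-n$, which shows that $\beta$ is nonexceptional if and only if $2-n-\beta$ is nonexceptional, because the indicial set $D$ is invariant under the reflection $x\mapsto (2-n)-x$.

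For injectivity, I would argue as indicated in the paragraph just before the statement. Take $u\in \ker(\ol\Delta_{\ol V})\cap H^{k}_{\beta}(\ol V)$; by elliptic regularity (interior estimates together with the weighted norms), $u$ is smooth and lies in $H^{\ell}_{\beta}$ for all $\ell$. Using the expansion \eqref{eq:harmonic_expansion}, $u$ is a (possibly infinite but convergent) sum of radially homogeneous terms of the form $r^{\xi_\pm(\nu_i)}u_i$, or $r^{-(n-2)/2}(\log r)\,u_j$ at the resonant eigenvalues. Membership in $H^{k}_{\beta}$ on the whole cone is a joint condition at $r\to 0$ and $r\to\infty$. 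Viewing $\ol M$ as a conifold with one CS and one AC end and the same weight $\beta$ on both, the condition as $r\to 0$ forces every contributing exponent $\alpha$ to satisfy $\alpha>\beta$, while the condition as $r\to\infty$ forces $\alpha<\beta$, and the resonant logarithmic term is excluded similarly. Since $\beta$ is nonexceptional, no contributing exponent equals $\beta$, and hence all coefficients must vanish. Thus $u=0$.

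For surjectivity, I would invoke formal self-adjointness of $\ol\Delta_{\ol V}$ and the duality sketched in the excerpt: with respect to the $L^2$-pairing the transpose of
\begin{align*}
	\ol{\Delta}_{\ol{V}}\colon H^{k}_\beta(\ol V)\to H^{k-2}_{\beta-2}(\ol V)
\end{align*}
is again $\ol\Delta_{\ol V}$ but now considered as a map $H^{2-k}_{2-n-\beta}(\ol V)\to H^{-k}_{-n-\beta}(\ol V)$. A Fredholm operator is surjective if and only if its transpose is injective, and by elliptic regularity the kernel of the transpose consists of smooth elements of $H^{\ell}_{2-n-\beta}(\ol V)$ for all $\ell$. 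Since $2-n-\beta$ is nonexceptional, the injectivity argument of the previous paragraph applies verbatim to show that this kernel is trivial. Combined with injectivity at weight $\beta$, this yields the isomorphism property.

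The only nontrivial step is really the injectivity argument, and the potential obstacle is justifying rigorously that an $H^{k}_{\beta}$ element of the kernel must admit a genuine convergent expansion \eqref{eq:harmonic_expansion} whose individual radial modes inherit the weighted integrability. This is however a standard separation-of-variables argument: expand $u(r,\cdot)$ in the $L^2(\wh V)$-orthonormal eigenbasis $\{u_i\}$ of $\Box_{\wh V}$, so that $u(r,\cdot)=\sum_i f_i(r)u_i$, observe that each $f_i$ solves the ODE $-f_i''-(n-1)r^{-1}f_i'+\nu_i r^{-2}f_i=0$ whose fundamental solutions are the powers $r^{\xi_\pm(\nu_i)}$ (and $r^{-(n-2)/2},r^{-(n-2)/2}\log r$ at resonance), and note that the weighted $L^2$ condition on $u$ transfers to weighted $L^2$-conditions on each $f_i$, forcing both fundamental coefficients to vanish whenever $\beta$ avoids the indicial set.
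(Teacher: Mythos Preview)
Your proposal is correct and follows essentially the same approach as the paper: establish injectivity from the harmonic expansion \eqref{eq:harmonic_expansion} together with the incompatible growth/decay conditions at the two ends of the cone, then deduce surjectivity via $L^2$-duality and formal self-adjointness. One small refinement: the paper observes that injectivity actually holds for \emph{all} weights $\beta$, not only nonexceptional ones, since the conditions $\alpha>\beta$ (from the CS end) and $\alpha<\beta$ (from the AC end) are already mutually exclusive regardless of whether $\beta$ lies in the indicial set; your remark that ``no contributing exponent equals $\beta$'' is therefore unnecessary, and the Fredholm (hence nonexceptional) hypothesis is needed only for the surjectivity step.
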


\subsection{Decay of Ricci-flat metrics on the cone}\label{subsec:decay_gauged_metrics}
Recall that for  two different Riemannian metrics $g,\tilde{g}$ the vector field $V({g},\tilde{g})$ is given in local coordinates by
\begin{align*}
	V(g,\tilde{g})^l:=g^{ij}(\Gamma(g)_{ij}^l-\Gamma(\tilde{g})_{ij}^l).	
	\end{align*}
\begin{defn}\label{def:Bianchi_gauge}
	We say that a metric $g$ is in \emph{Bianchi gauge} with respect to $\tilde{g}$ if $V(g,\tilde{g})=0$.
\end{defn}
\begin{thm}\label{thm:Ricci_flat_bianchi_gauge}
	Let $(\ol{M},\ol{g})$ be a Ricci-flat cone and let $g$ be a Ricci-flat metric defined on an open set $U\subset \ol{M}$ which is in Bianchi gauge with respect to $\ol{g}$.
	Then the following assertions hold: 
	\begin{itemize}
		\item[(i)] If $U$ is an open neighborhood of $0$ (i.e. if $U$ contains $\ol{M}_{<R}$ for some $R\in\R$) and $g-\ol{g}= \mathcal{O}_2(r^{\alpha})$ for some $\alpha>0$ as $r\to0$, then we  have $g-\ol{g}= \mathcal{O}_{\infty}(r^{\xi^B_+})$ as $r\to0$. 
		\item[(ii)]	If $U$ is an open neighborhood of $\infty$ (i.e. if $M\setminus U$ is compact) and $g-\ol{g}\in \mathcal{O}_2(r^{-\alpha})$ for some $\alpha>0$ as $r\to\infty$,
		 then 
		we divide into two subcases:
		\begin{itemize}
			\item[(iia)] If $\NotResonanceDominated{\Box_L}$, then $g-\ol{g}=\O_{\infty}(r^{-\xi^B_-})$  as $r\to\infty $.
			\item[(iib)] If $\ResonanceDominated{\Box_L}$, then $g-\ol{g}=\O_{\infty}(r^{-\frac{n-2}{2}}\log(r))$  as $r\to\infty $.
		\end{itemize}
	\end{itemize}
\end{thm}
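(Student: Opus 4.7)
Set $h := g - \ol{g}$. The strategy is to convert the Ricci-flat plus exact Bianchi gauge condition $V(g,\ol{g}) = 0$ into a semilinear elliptic equation of the form
\begin{equation*}
\ol{\Delta}_L h = Q(h,\ol{\nabla} h,\ol{\nabla}^2 h),
\end{equation*}
and then bootstrap the decay of $h$ on the weighted scales of Subsection~\ref{subsec:analysis_conifolds}, using the isomorphism property (Proposition~\ref{prop:conical_isomorphism}) of $\ol{\Delta}_L$ on the cone at nonexceptional weights. To derive the equation I would expand $\ric(\ol{g} + h)$ via \eqref{eq:linearized_Ricci} and substitute the identity
\begin{equation*}
V(\ol{g} + h,\ol{g}) = \ol{B}_{\ol{g}} h + Q_1(h,\ol{\nabla} h)
\end{equation*}
coming from \eqref{eq:linearized_bianchi} (with $Q_1$ quadratic) in order to eliminate the $\ol{\delta}^*\ol{B}_{\ol{g}}h$ term; the contribution of $\ol{B}_{\ol{g}} h$ then gets absorbed into the nonlinearity $Q$, which is smooth, at least quadratic, and uses at most two derivatives of $h$. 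Schematically one has $|Q| \lesssim (|h| + |\ol{\nabla} h|)\,(|h| + |\ol{\nabla} h| + |\ol{\nabla}^2 h|)$, so $h = \O_k(r^\beta)$ implies $Q(h) = \O_{k-2}(r^{2\beta - 2})$. Standard elliptic regularity for this system upgrades the hypothesis $h = \O_2(r^\alpha)$ to $h = \O_\infty(r^\alpha)$.

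Next, for case~(i), I would run the following bootstrap. Let $\chi$ be a smooth cutoff equal to $1$ near $r = 0$ and supported in $U$, and set $\tilde{h} := \chi h$, now globally defined on $\ol{M}$. Then
\begin{equation*}
\ol{\Delta}_L \tilde{h} = \chi\, Q(h) + [\ol{\Delta}_L,\chi]\,h,
\end{equation*}
with the commutator term compactly supported away from the tip and hence contained in every weighted Sobolev space. Inductively assume $\tilde{h} = \O_\infty(r^\beta)$ with $0 < \beta < \xi^B_+$ nonexceptional; then the right-hand side lies in $H^{k-2}_{2\beta-2}$ for every $k$. For any nonexceptional $\gamma \in (\beta,\min(2\beta,\xi^B_+))$, Proposition~\ref{prop:conical_isomorphism} produces a unique $w \in H^k_\gamma$ solving the equation. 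Since $\tilde{h} - w$ is a globally defined element of $\kernel(\ol{\Delta}_L)\cap H^k_\beta$ on the full cone, the injectivity argument in the proof of Proposition~\ref{prop:conical_isomorphism} forces $\tilde{h} = w$; the Sobolev embedding then yields $h = \O_\infty(r^\gamma)$. Iterating $\beta \mapsto \min(2\beta,\xi^B_+) - \varepsilon$ terminates after finitely many steps with $h = \O_\infty(r^{\xi^B_+ - \varepsilon})$ for every $\varepsilon > 0$.

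To upgrade to the sharp exponent $\xi^B_+$, one further iteration is performed at a nonexceptional weight $\gamma'$ slightly above $\xi^B_+$. Because $h$ satisfies the exact Bianchi gauge, $\ol{B}_{\ol{g}} h = -Q_1(h) = \O(r^{2(\xi^B_+-\varepsilon)})$, which is strictly faster decaying than $r^{\xi^B_+}$. Hence the residual $\tilde{h} - w \in \kernel(\ol{\Delta}_L)$ lies in $\kernel(\ol{B}_{\ol{g}})$ up to errors strictly faster than its own leading order, so Corollary~\ref{cor:decay_gauged_kernel_LL}(i) applies to give $\tilde{h} - w = \O_\infty(r^{\xi^B_+})$ and therefore $h = \O_\infty(r^{\xi^B_+})$, proving (i). Case~(ii) proceeds by the mirror argument near $r = \infty$: the weights flip sign and the iteration doubles the rate $-\beta$, terminating at $-\xi^B_-$, which gives (iia). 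In the resonance-dominated subcase (iib) the two indicial roots coalesce at $-\tfrac{n-2}{2}$, producing a logarithmic homogeneous solution; Corollary~\ref{cor:decay_gauged_kernel_LL}(iib) then supplies the rate $r^{-(n-2)/2}\log r$ at which the bootstrap stops.

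The main obstacle is the passage to the sharp exponent: one must propagate the Bianchi gauge through the iteration with enough quantitative control to ensure the residual kernel element is effectively in $\kernel(\ol{B}_{\ol{g}})$, so that Corollary~\ref{cor:decay_gauged_kernel_LL} (rather than merely Corollary~\ref{cor:decay_kernel_LL}) can be invoked at the end. Once this bookkeeping is handled, the rest is standard weighted-space analysis on Riemannian cones.
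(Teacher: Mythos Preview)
There is a genuine gap in your bootstrap step. You claim that $\tilde h - w \in \kernel(\ol\Delta_L)\cap H^k_\beta$ on the full cone and hence vanishes by the injectivity in Proposition~\ref{prop:conical_isomorphism}. This is not justified. While $\tilde h$ is compactly supported (hence lies in every weighted space at infinity), the solution $w\in H^k_\gamma$ produced by Proposition~\ref{prop:conical_isomorphism} (with the same weight $\gamma$ at both ends of the cone) is only $o(r^\gamma)$ at infinity: since the right-hand side is compactly supported, $w$ is harmonic for large $r$ and may genuinely grow like $r^{\gamma'}$ for any indicial root $\gamma'\in(0,\gamma)$. Thus $\tilde h - w$ is a global harmonic tensor which is $\O(r^\beta)$ at the tip but may grow like $r^{\gamma'}$ at infinity, so it does not lie in any single $H^k_\alpha$ and the injectivity argument fails. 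Concretely, $\tilde h - w$ can contain any homogeneous mode $r^\xi u$ with $\beta\le\Re\xi\le\gamma$, and such modes exist whenever $(\beta,\gamma)$ contains an indicial root of $\ol\Delta_L$.

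The consequence is that your iteration can only reach $\xi^L_+$, not $\xi^B_+-\varepsilon$: once $\gamma$ is pushed past the first positive indicial root $\xi^L_+$, the residual $h_0=\tilde h - w$ need not vanish. The paper never claims $h_0=0$. Instead it uses Corollary~\ref{cor:decay_kernel_LL} to deduce $h_0=\O_\infty(r^{\xi^L_+})$ at the tip, writes $h=h_0+h_1$ with $h_1$ strictly faster, and then expands the exact gauge condition $V(g,\ol g)=0$ to show that $B_{\ol g}$ of the \emph{leading homogeneous term} of $h_0$ vanishes. By Proposition~\ref{prop:Bianchi_indicial_values}/Corollary~\ref{cor:decay_gauged_kernel_LL} this forces that leading term to be zero whenever its rate lies strictly below $\xi^B_+$, and one iterates through every indicial root of $\ol\Delta_L$ in $(0,\xi^B_+)$. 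Your final paragraph contains the right idea, but it must be invoked at each such intermediate root, not only once at the end; and Corollary~\ref{cor:decay_gauged_kernel_LL} applies to the exactly gauged homogeneous leading part, not to $h_0$ as a whole ``up to errors''.
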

\begin{proof}
	In this proof, all norms and tensor products are taken with respect to $\ol{g}$.  By the assumptions of the theorem, we have the equation
	\begin{align*}
		2\ric_g=\mathcal{L}_{V(g,\ol{g})}\ol{g}
	\end{align*}
	on $U$, which can be written with respect to the difference  $h=g-\ol{g}$ as
	\begin{align}\label{eq:ricci_de_Turck2}
		\ol{\Delta}_Lh=g^{-1}*\ol{\mathrm{Rm}}*h*h+g^{-1}*g^{-1}*\ol{\nabla} h*\ol{\nabla} h+g^{-1}*\ol{\nabla}^2h* h,
	\end{align}
	where, as usual, $*$ denotes finite linear combinations of tensor contractions with covariantly constant coefficients.
	This follows essentially from \cite[Lemma~2.1]{Shi89}, but is carried out in more detail in \cite[Lemma~3.1]{KP20}.
	From here on, the proof is a standard iteration procedure in weighted function spaces, but we decided to present it here for completeness. We focus on case (i), the other one is completely analogous.
	Without loss of generality, we assume that $h$ is defined on $\ol{M}_{< R}$ for some $R>0$.	
	We extend $h$ smoothly to a tensor $\ol{h}$ on $\ol{M}$ such that
	\[
	\ol{h}\equiv0 \text{ on }\ol{M}_{>2R},\qquad \ol{h}\equiv h\text{ on }\ol{M}_{< R}.
	\]
	At first, by elliptic regularity, $\ol{h}=\mathcal{O}_{\infty}(r^{\alpha})$ as $r\to0$ (and for trivial reasons also as $r\to\infty$).
    Thus  by \eqref{eq:ricci_de_Turck2},    	
	\begin{equation}
		 \ol{\Delta}_L \ol{h} = \mathcal{O}_{\infty}(r^{2\alpha-2}),
	\end{equation}
    in both the cases $r\to\infty$ and $r\to 0$. Therefore,
    \begin{align*}
    	 \ol{\Delta}_L \ol{h}\in H^k_{2\alpha-2-\epsilon}(S^2M)
    \end{align*}
	for any $k\in \N_0$ and for any $\epsilon>0$
    and by Proposition~\ref{prop:conical_isomorphism}, we find  a tensor $h_1\in H^{k+2}_{-2\alpha-\epsilon}(S^2M)$ such that
    \begin{align*}
    	\ol{\Delta}_Lh_1=\ol{\Delta}_L \ol{h},
    \end{align*}
	provided that $2\alpha-\epsilon$ is a nonexceptional weight.
	Note that $h_1$ is independent of the choice of $k$ (but not of the choice of $\epsilon$) as $H^{k+2}_{2\alpha-\epsilon}\subset H^{k'+2}_{2\alpha+\epsilon}$ if  $k\leq k'$. In particular, $h_1\in H^{k}_{2\alpha-\epsilon}$ for all $k\in \N$ and $\epsilon>0$ so by Sobolev embedding, $h_1\in \mathcal{O}_{\infty}(r^{2\alpha-2\epsilon})$.
      
    Because $h_0:=\bar{h}-h_1\in\ker(\Delta_L)$ and $h_0\in \mathcal{O}_{\infty}(r^{\beta})$ as $r\to 0$, we know by  Corollary~\ref{cor:decay_harmonic_sections} that $h_1\in  \mathcal{O}_{\infty}(r^{\xi_+})$. We obtain on the set $\ol{M}_{< R}$ that
    \begin{align*}
    	h=\ol{h}=h_0+h_1\in \mathcal{O}_{\infty}(r^{\xi_+})+\mathcal{O}_{\infty}(r^{2\alpha-2\epsilon}).
    \end{align*}
	as $r\to 0$. If $2\alpha-2\epsilon<\xi_+$ for some $\epsilon>0$, we are done. Otherwise we repeat the same procedure again, starting with the new decay rate $\alpha':=2\alpha-2\epsilon$. After iterating this procedure at most a finite number of times, we will be in the situation where $2\alpha-2\epsilon<\xi_+$. In this case, we can now conclude
	\begin{align*}
		h=\ol{h}=h_0+h_1\in \mathcal{O}_{\infty}(r^{\xi_+}),
	\end{align*}
	where $\ol{\Delta}_Lh_0=0$ and $h_1\in \mathcal{O}_{\infty}(r^{\xi_++\epsilon})$ for some $\epsilon>0$.
	The equation $V(g,\ol{g})=0$ is equivalent to
	\begin{align*}
		0=\frac{1}{2}g^{ij}(\ol{\nabla}_ih_{jk}+\ol{\nabla}_{j}h_{ik}-\ol{\nabla}_kh_{ij})
		&=B_{\ol{g}}(h)+\frac{1}{2}(g^{ij}-\ol{g}^{ij})(\ol{\nabla}_ih_{jk}+\ol{\nabla}_{j}h_{ik}-\ol{\nabla}_kh_{ij})\\
		&=B_{\ol{g}}(h_0)+B_{\ol{g}}(h_1)-h_{lm}{g}^{il}\ol{g}^{jm}(\ol{\nabla}_ih_{jk}+\ol{\nabla}_{j}h_{ik}-\ol{\nabla}_kh_{ij})\\
		&=B_{\ol{g}}(h_0)+\mathcal{O}(r^{\xi_+-1+\epsilon}),
	\end{align*}
	and because $B_{\ol{g}}(h_0)\in \mathcal{O}_{\infty}(r^{\xi_+-1})$ we conclude $B_{\ol{g}}(h_0)=0$. Thus, if $\xi_+<\xi^B_+$, we get $h_0=0$ and therefore $h\in \mathcal{O}_{\infty}(r^{\xi_++\epsilon})$. We then continue with the same procedure as above till we arrive at a decomposition $h=h_0+h_1$  with $\Delta_Lh_0=0$ and $B_{\ol{g}}(h_0)=0$, where $h_1$ decays faster than $h_0$. By Corollary~\ref{cor:decay_gauged_kernel_LL},
	we know that $h_0=\mathcal{O}_{\infty}(r^{\xi^B_+})$ which
	implies the desired result.
\end{proof}

\subsection{The Bianchi gauge}
In this subsection, we want to find out under which conditions the Bianchi gauge condition actually defines a reasonable gauge. For simplicity, we assume again that our manifold has only one end which is either asymptotically conical or conically singular.
 We start with the following observation:
\begin{lem}
	The set
	\begin{align*}
		\mathcal{G}_{\beta}^k:=\SetDefine{\tilde{g}\in H^k_{\beta}(S^2_+M)}{V(g,\tilde{g})=0},
	\end{align*}
	is an open subset of a vector space. In particular, it is a submanifold of $H^k_{\beta}(S^2_+M)$.
\end{lem}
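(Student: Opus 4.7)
The plan is to exhibit $\mathcal{G}^k_\beta$ as the intersection of the open set of Riemannian metrics with a \emph{closed linear subspace} of $H^k_\beta(S^2M)$. The key observation is that although the gauge condition
\[
V(g,\tilde g)^l=g^{ij}\Gamma(g)^l_{ij}-\tfrac{1}{2}g^{ij}\tilde g^{lk}\bigl(\partial_i\tilde g_{jk}+\partial_j\tilde g_{ik}-\partial_k\tilde g_{ij}\bigr)=0
\]
is nonlinear in $\tilde g$ through the appearance of the inverse metric $\tilde g^{lk}$, this nonlinearity evaporates after contraction with $\tilde g_{lm}$. Indeed,
\[
\tilde g_{lm}V(g,\tilde g)^l= g^{ij}\tilde g_{lm}\Gamma(g)^l_{ij}-\tfrac{1}{2}g^{ij}\bigl(\partial_i\tilde g_{jm}+\partial_j\tilde g_{im}-\partial_m\tilde g_{ij}\bigr)
\]
depends \emph{linearly} on $\tilde g$. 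Denote the right-hand side by $(L_g\tilde g)_m$. Since $L_g$ involves only multiplication by the smooth fixed data $g^{ij}$ and $\Gamma(g)^l_{ij}$ together with at most one derivative of $\tilde g$, and since these coefficients behave correctly with respect to the radius function, $L_g$ defines a bounded linear operator $H^k_\beta(S^2M)\to H^{k-1}_{\beta-1}(T^*M)$, whose kernel $\ker(L_g)$ is therefore a closed linear subspace.

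The second step is to invoke the fact that for any $\tilde g\in H^k_\beta(S^2_+M)$ the matrix $(\tilde g_{lm})$ is pointwise invertible; consequently $V(g,\tilde g)=0$ if and only if $L_g\tilde g=0$. This identifies
\[
\mathcal{G}^k_\beta=\ker(L_g)\cap H^k_\beta(S^2_+M),
\]
so it only remains to argue that $H^k_\beta(S^2_+M)$ is open in $H^k_\beta(S^2M)$ (with the convention, clarified in Subsection~\ref{subsec:analysis_conifolds}, that a metric lies in the space precisely when its difference from the background $g$ lies in $H^k_\beta$). For $k$ beyond the Sobolev embedding threshold, elements of $H^k_\beta$ are continuous, so positive-definiteness at every point is preserved under sufficiently small $H^k_\beta$-perturbations; the weighted structure guarantees uniform control near the ends. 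Intersecting this open set with the closed subspace $\ker(L_g)$ shows that $\mathcal{G}^k_\beta$ is open in $\ker(L_g)$, and hence in a vector space, as required.

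The whole content of the proof is the linearization trick in the first step; the only mildly technical point is keeping the weight bookkeeping consistent so that $L_g$ has the asserted mapping properties and so that the positivity condition is truly open at each end. Neither is a genuine obstacle: the first follows from multiplying coefficients whose pointwise size matches the scaling of $\rho$, and the second follows from the Sobolev embeddings and pointwise behaviour of the weighted spaces recorded in Subsection~\ref{subsec:analysis_conifolds}.
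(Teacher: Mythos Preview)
Your proof is correct and follows essentially the same approach as the paper: both arguments show that the nonlinear condition $V(g,\tilde g)=0$ is equivalent to a linear one by stripping off the factor $\tilde g^{-1}$, yielding $\mathcal{G}^k_\beta=\{\tilde g\in H^k_\beta(S^2_+M): B_g(\tilde g-g)=0\}$. The paper reaches this via a computation in $g$-normal coordinates, while you do it by contracting with $\tilde g_{lm}$; your $L_g\tilde g$ is precisely $B_g(\tilde g-g)$ once the partial derivatives are rewritten as $g$-covariant derivatives, so the two arguments are the same in substance.
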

\begin{proof}
	Choose a point $p\in M$, and $h=\tilde{g}-g$. In $g$-normal coordinates around $p$, we compute at $p$ that
	\begin{align*}
		V(g,\tilde{g})^k=	-{g}^{ij}\Gamma(\tilde{g})_{ij}^k
		&=-\frac{1}{2}{g}^{ij}\tilde{g}^{kl}(\partial_i\tilde{g}_{lj}+\partial_j\tilde{g}_{li}-\partial_l\tilde{g}_{ij})\\
		&=-\frac{1}{2}{g}^{ij}\tilde{g}^{kl}(\partial_ih_{lj}+\partial_jh_{li}-\partial_lh_{ij})\\
		&=-\frac{1}{2}{g}^{ij}\tilde{g}^{kl}({\nabla}_ih_{lj}+{\nabla}_jh_{li}-{\nabla}_lh_{ij})
		=\tilde{g}^{kl}B_g(h)_l.
	\end{align*}
	Therefore,
	\begin{align*}
		\mathcal{G}_{\beta}^k=\SetDefine{g+h\in H^k_{\beta}(S^2_+M)}{B_{g}(h)=0}=\SetDefine{h\in H^k_{\beta}(S^2_+M)}{B_{g}(h)=0}
	\end{align*}
	since $B_g(g)=0$.
	Hence, we have redefined $\mathcal{G}_{\beta}^k$ by a linear equation, which proves the lemma.
\end{proof}
\begin{rem}
	The fact that the above gauge condition is linear is the reason why we prefer it over the condition $V(\tilde{g},g)=0$ for fixed $g$. In fact, if $g$ has nontrivial Killing fields, the differential of the linearization is not surjective and the set of metrics satisfying this condition may fail to form a manifold.
\end{rem}

\begin{lem}\label{prop:surj_Laplace}
	The connection Laplacian
\begin{align*}
	\Delta_1: H^{k+2}_{\beta}(T^*M)\to H^{k}_{\beta-2}(T^*M)	
\end{align*}	
is injective for nonexceptional $\beta<0$ and surjective for every nonexceptional $\beta>2-n$.
\end{lem}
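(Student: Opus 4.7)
The argument splits into proving injectivity for weights $\beta<0$ and then obtaining surjectivity for $\beta>2-n$ by Fredholm duality. For injectivity, suppose $\omega\in H^{k+2}_\beta(T^*M)$ satisfies $\Delta_1\omega=0$. By weighted elliptic regularity, $\omega$ is smooth and lies in $H^l_\beta$ for all $l$. Since $\Delta_1=\nabla^*\nabla$ in the paper's convention, a direct pointwise computation yields
\begin{align*}
	\Delta_0|\omega|^2 = 2\langle\Delta_1\omega,\omega\rangle - 2|\nabla\omega|^2 = -2|\nabla\omega|^2\leq 0,
\end{align*}
so $f:=|\omega|^2$ is classically subharmonic on $M$. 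In the asymptotically conical case, the weighted Sobolev embedding together with $\beta<0$ forces $f\to 0$ at infinity; applying the maximum principle to a compact exhaustion then gives $f\equiv 0$, and hence $\omega\equiv 0$.

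In the conically singular case $f$ may blow up at the cone tip, and the direct subharmonic argument fails. Here the plan is to combine a cutoff integration-by-parts bound $\int|\chi_\epsilon\nabla\omega|^2\leq 4\int|d\chi_\epsilon|^2|\omega|^2$, whose right-hand side is of order $\epsilon^{2\beta+n-2}$, with an indicial-root bootstrap near $r=0$: using Proposition~\ref{prop:spectrum_tang_1forms} together with the standard conifold parametrix (cf.\ \cite{Mel93}), one peels off the finitely many indicial terms of $\omega$ with weights in $(\beta,1-n/2]$, thereby improving the weight past the IBP threshold $1-n/2$. The IBP then delivers $\nabla\omega=0$, and the resulting parallel 1-form is ruled out by checking that no nonzero parallel section survives in the resulting weighted space.

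Surjectivity for nonexceptional $\beta>2-n$ then follows from the $L^2$-duality at the end of Subsection~\ref{subsec:analysis_conifolds}. The dual of the bounded map $\Delta_1\from H^{k+2}_\beta\to H^k_{\beta-2}$ is again $\Delta_1$, seen now as $H^{-k}_{2-n-\beta}\to H^{-k-2}_{-n-\beta}$, and Fredholm surjectivity is equivalent to injectivity of this dual. Weighted elliptic regularity identifies its kernel with $\kernel_{H^l_{2-n-\beta}}(\Delta_1)$ for any $l$, and the assumption $\beta>2-n$ translates to $2-n-\beta<0$, which is precisely the injectivity range from the first part. The central difficulty throughout is the borderline range $\beta\in[1-n/2,0)$, where naive $L^2$-IBP gives no information and one must invoke either the subharmonic maximum principle (AC case) or the indicial expansion at the cone tip (CS case); the explicit description of $\spectrum(\Box_1)$ in Proposition~\ref{prop:spectrum_tang_1forms} keeps the dangerous indicial roots outside this range, ensuring that the bootstrap terminates.
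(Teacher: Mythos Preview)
Your argument for the asymptotically conical case and the duality step are essentially identical to the paper's proof: the Bochner identity $\Delta_0|\omega|^2=-2|\nabla\omega|^2$, decay of $|\omega|$ at infinity from $\beta<0$, the maximum principle on a compact exhaustion, and then surjectivity via the $L^2$-pairing with the dual weight $2-n-\beta$.

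The difference lies in your treatment of the conically singular case, which rests on a misreading of the intended scope. Although the subsection allows either type of end, the Lemma as stated---injectivity for $\beta<0$, surjectivity for $\beta>2-n$---is the AC version; this is how it is used in Theorem~\ref{slice}, and the paper explicitly notes afterwards (Remark~\ref{rem:gauge_mutiple_ends}) that the CS analogue simply reverses the sign of the weight. With $\beta>0$ at a CS end one again has $|\omega|\to 0$ as $r\to 0$, and the same one-line subharmonic argument applies verbatim. So your cutoff/IBP bound and indicial bootstrap are unnecessary.

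Moreover, that CS argument is not fully justified as written. You assert that Proposition~\ref{prop:spectrum_tang_1forms} ``keeps the dangerous indicial roots outside'' the interval $(\beta,1-n/2]$, but you do not verify this, and for $\beta$ sufficiently negative the interval can contain indicial roots (for instance $\xi_-(\lambda^{(1)}_{i,+})\leq 2-n$). Even when no indicial roots lie in the interval, ``peeling off'' terms from a genuine kernel element does not by itself show those terms vanish; you would still need an argument that the leading homogeneous piece is zero. None of this is needed once the correct weight convention for the CS end is used.
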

\begin{proof}
	If $\omega\in H^{k+2}_{\beta}(TM)$ satisfies $\Delta_1 \omega=0$, then
	\begin{align*}
		\Delta |\omega|^2+2|\nabla \omega|^2=0.
	\end{align*}
	By the maximum principle, we conclude that any bounded harmonic vector field on $M$ is parallel. In particular, its pointwise norm is constant and 
	\begin{align*}
		\Delta_1: H^{k+2}_{\beta}(T^*M)\to H^{k}_{\beta-2}(T^*M)	
	\end{align*}	
is an injective Fredholm operator for every nonexceptional $\beta<0$. The surjectivity for $\beta>2-n$ follows from duality via the $L^2$-pairing, as explained in Subsection~\ref{subsec:analysis_conifolds}.
\end{proof}
\begin{lem}\label{lem:solv_criterion}
	Pick $\beta>0$ such that the two operators
	\begin{align*}
		B: H^k_{\beta}(S^2M)\to H^{k-1}_{\beta-1}(T^*M)
	\end{align*}
	and
	\begin{align*}
		\Delta_1: H^{k+1}_{\beta+1}(T^*M)\to H^{k-1}_{\beta-1}(T^*M)	
	\end{align*}
	are both Fredholm.
	Then,  we have $Bh\in \Delta_1(H^{k+1}_{\beta+1}(T^*M))$ if and only if
	\begin{align}\label{eq:orthogonality2}
		(h,B^*\eta)_{L^2}=0
	\end{align}
	for every $\eta \in H^k_{1-n-\beta}(T^*M)$ with $\Delta_1\eta=0$. Here, $B^*\omega=\delta^*\omega+\frac{1}{2}\delta\omega\cdot g$ is the formal adjoint of $B$.
\end{lem}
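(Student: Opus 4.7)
The statement is essentially a Fredholm alternative for the self-adjoint operator $\Delta_1$ in weighted Sobolev spaces, translated through the $L^2$-duality $(H^{k-1}_{\beta-1})' = H^{1-k}_{1-n-\beta}$ recalled at the end of Subsection~\ref{subsec:analysis_conifolds}. My plan is first to identify $\kernel(\Delta_1^*)$ explicitly, and then to rewrite the resulting orthogonality condition using a density/integration-by-parts argument.

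For the first step, since $\Delta_1$ is Fredholm between the two spaces in the hypothesis, standard Banach-space Fredholm theory gives
\[
	\image\left(\Delta_1\colon H^{k+1}_{\beta+1}(T^*M)\to H^{k-1}_{\beta-1}(T^*M)\right) = \SetDefine{f \in H^{k-1}_{\beta-1}(T^*M)}{(f,\eta)_{L^2}=0 \text{ for all }\eta\in\kernel(\Delta_1^*)},
\]
where the dual pairing $H^{k-1}_{\beta-1}\times H^{1-k}_{1-n-\beta}\to \R$ is, as explained at the end of Subsection~\ref{subsec:analysis_conifolds}, an extension of the $L^2$-pairing. The Banach adjoint $\Delta_1^*$ sends $(H^{k-1}_{\beta-1})' \to (H^{k+1}_{\beta+1})'$, i.e., $H^{1-k}_{1-n-\beta}\to H^{-1-k}_{-1-n-\beta}$, and is induced by the same differential operator $\Delta_1$ because $\Delta_1$ is formally self-adjoint on $C^\infty_{\mathrm{cs}}$. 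By the elliptic regularity principle for weighted Sobolev spaces noted in Subsection~\ref{subsec:analysis_conifolds} (the kernel of $\Delta_1$ on $H^{l}_{1-n-\beta}$ is independent of the smoothness index $l$), this kernel coincides with $\SetDefine{\eta\in H^{k}_{1-n-\beta}(T^*M)}{\Delta_1\eta=0}$. Applying the displayed identity to $f=Bh$, which lies in $H^{k-1}_{\beta-1}(T^*M)$ by the Fredholm hypothesis on $B$, reduces the claim to verifying
\[
	(Bh,\eta)_{L^2} = (h, B^*\eta)_{L^2}
\]
for every $h\in H^k_\beta(S^2M)$ and every such $\eta$.

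For the second step, note that $B^*$ is a first-order operator, so $\eta\in H^k_{1-n-\beta}(T^*M)$ implies $B^*\eta\in H^{k-1}_{-n-\beta}(S^2M)\subset L^2_{-n-\beta}$, while $h\in H^k_\beta\subset L^2_\beta$; the $L^2$-pairing $(h,B^*\eta)$ is therefore well-defined under the weight duality $L^2_\beta\leftrightarrow L^2_{-n-\beta}$, and analogously $(Bh,\eta)_{L^2}$ is well-defined under $L^2_{\beta-1}\leftrightarrow L^2_{1-n-\beta}$. For $h\in C^\infty_{\mathrm{cs}}(S^2M)$ the identity follows from ordinary integration by parts with no boundary contribution, since the explicit formula $B^*\omega=\delta^*\omega+\tfrac{1}{2}\delta\omega\cdot g$ is dual to $B=\delta+\tfrac{1}{2}d\circ\trace$. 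Both sides are continuous in $h$ for the $H^k_\beta$-topology, so density of $C^\infty_{\mathrm{cs}}$ in $H^k_\beta$ (which holds by definition of the weighted Sobolev space) extends the identity to all $h\in H^k_\beta$, completing the proof.

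The only mildly subtle point I anticipate is bookkeeping of the weight shifts: the Fredholm duality forces the weight $1-n-\beta$ on $\eta$, which is precisely what makes the two $L^2$-pairings above simultaneously finite; once this is matched up, the remaining content is the standard Fredholm alternative together with a one-line integration by parts.
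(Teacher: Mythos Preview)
Your proof is correct and follows essentially the same route as the paper's own argument: apply the Fredholm alternative for the self-adjoint operator $\Delta_1$ via the weighted $L^2$-duality to characterize its image, upgrade the regularity of $\eta$ by elliptic regularity, and then move $B$ across the pairing by integration by parts. The paper's proof is a three-line sketch of exactly this; your version simply spells out the density step and the weight bookkeeping in more detail.
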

\begin{proof}
	By duality, $Bh=\Delta_1\omega$ for some $\omega\in H^{k+1}_{\beta+1}(T^*M)$ if and only if 
	\begin{align*}
		(Bh,\eta)_{L^2}=0
	\end{align*}
	for all $\eta \in H^{1-k}_{1-n-\beta}(T^*M)$ with $\Delta_1\eta=0$. The criterion \eqref{eq:orthogonality2} now follows from integration by parts. By elliptic regularity,  $\eta \in H^{k}_{1-n-\beta}(T^*M)$. 
\end{proof}
\begin{lem}\label{lem:compact_perturbation2}
	Let $\beta\in\R$ be as in Lemma~\ref{lem:solv_criterion}. Then
	there exist a number $N\in\N$ and for any open precompact subset $U\subset M$ a set of tensors $h_i$, $i=1,\ldots N$, with support in $\ol{U}$ such that
	\begin{align*}
		H^k_{\beta}(S^2M)=B^{-1}(\Delta_1(H^{k+1}_{\beta+1}(T^*M)))\oplus \SetDefine{\sum_{i=1}^N\alpha_ih_i}{\alpha_i\in \R}.
	\end{align*}
\end{lem}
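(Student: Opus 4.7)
The plan is to realize $F := B^{-1}(\Delta_1(H^{k+1}_{\beta+1}(T^*M)))$ as a closed subspace of $H^k_\beta(S^2M)$ of finite codimension $N$, to describe its annihilator explicitly through Lemma~\ref{lem:solv_criterion}, and to produce tensors supported in $\bar U$ that pair nondegenerately with this annihilator via a unique continuation argument.

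The setup is straightforward. Since $\Delta_1$ is Fredholm by hypothesis, its image is closed of finite codimension in $H^{k-1}_{\beta-1}(T^*M)$, so $F$, being the preimage under the bounded map $B$, is closed of finite codimension in $H^k_\beta(S^2M)$; we call this codimension $N$. Let $\mathcal{H}:=\kernel(\Delta_1\colon H^k_{1-n-\beta}(T^*M)\to H^{k-2}_{-1-n-\beta}(T^*M))$, which by Fredholmness and elliptic regularity is a finite-dimensional space of smooth $1$-forms. Lemma~\ref{lem:solv_criterion} rewrites
\begin{align*}
F=\bigcap_{\eta\in\mathcal{H}}\kernel\bigl(h\mapsto (h,B^*\eta)_{L^2}\bigr),
\end{align*}
so the annihilator of $F$ in $(H^k_\beta(S^2M))^*$ is identified, via the $L^2$-pairing, with the finite-dimensional space $B^*\mathcal{H}\subset C^\infty(S^2M)$.

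The problem then reduces to showing that the pairing
\begin{align*}
\{h\in C^\infty(S^2M) : \supp h\subset \bar U\}\times B^*\mathcal{H}\longrightarrow\R,\qquad (h,k)\mapsto (h,k)_{L^2}
\end{align*}
is nondegenerate in the right factor; granted this, we fix a basis $k_1,\dots,k_N$ of $B^*\mathcal{H}$ and choose smooth $h_1,\dots,h_N$ with $\supp h_i\subset\bar U$ dual to it, so that the classes $[h_i]$ form a basis of $H^k_\beta(S^2M)/F$ and the claimed direct sum decomposition follows. Suppose then that $k=B^*\eta\in B^*\mathcal{H}$ pairs trivially with every admissible test tensor; since $k$ is smooth, this is equivalent to $k\equiv 0$ on $U$. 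In the Ricci-flat setting of the paper, the commutation identities \eqref{commutation} give $\Delta_L k=\Delta_E(B^*\eta)=B^*(\Delta_1\eta)=0$ on all of $M$, so $k$ lies in the kernel of the second-order elliptic operator $\Delta_L$ and vanishes on the nonempty open set $U$. Aronszajn's unique continuation theorem then forces $k\equiv 0$ on the connected manifold $M$, as needed.

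The main obstacle is this last unique continuation step: it hinges on the commutation formula $\Delta_L B^*=B^*\Delta_1$, which relies on Ricci-flatness, together with the ellipticity of $\Delta_L$, which permits Aronszajn's theorem. Everything else is routine---finite codimensionality of $F$ from Fredholmness of $\Delta_1$, the annihilator identification via Lemma~\ref{lem:solv_criterion}, and the dual basis construction---and with the nondegeneracy in hand the desired $h_i$ fall into place.
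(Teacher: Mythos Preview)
Your argument is correct and follows essentially the same route as the paper's proof: both hinge on the characterization of $F$ from Lemma~\ref{lem:solv_criterion}, the commutation $\Delta_L B^*=B^*\Delta_1$ on Ricci-flat manifolds, and unique continuation for $\kernel(\Delta_L)$ to ensure that no nonzero $B^*\eta$ vanishes on $U$. The only cosmetic difference is that the paper constructs the $h_i$ explicitly as $h_i=\chi\,B^*\eta_i$ for a bump function $\chi$ supported in $\ol{U}$ and an $A$-orthonormal basis $\{\eta_i\}$ (with $A(\eta_1,\eta_2)=\int_M\chi\langle B^*\eta_1,B^*\eta_2\rangle\,dV$), whereas you invoke an abstract dual-basis argument; both yield the same decomposition.
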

\begin{proof}
	Note that on a Ricci-flat manifold, $2\delta \circ B^* = \Delta_1$, therefore $\kernel_{H^{k+1}_{\beta+1}}(B^*)\subset \kernel_{H^{k+1}_{\beta+1}}(\Delta_1).$ Since $\kernel_{H^{k+1}_{\beta+1}}\Delta_1$ is finite dimensional, we can find a finite-dimensional vector space $V$ such that
	\begin{align*}
		\kernel_{H^{k+1}_{\beta+1}}(\Delta_1)=\kernel_{H^{k+1}_{\beta+1}}(B^*) \oplus V.
	\end{align*}
	By the commutation formulas \eqref{commutation}, $B^*\eta\in\kernel(\Delta_L)$ for $\eta\in V$. By assumption, $B^*\eta\neq 0$ if $\eta\neq0$. In particular, $B^*\eta$ does not vanish identically on any open subset of $M$ by elliptic theory. For a given open and precompact set $U\subset M$, we choose a bump function $\chi:M\to [0,1]$ which is strictly positive in $U$ and vanishes identically on $M\setminus U$. Then the bilinear form
	\begin{align*}
		A: V\times V\to \R,\qquad (\eta_1,\eta_2)\mapsto \int_M \chi \langle B^*\eta_1,B^*\eta_2\rangle\dv
	\end{align*} 
	is an inner product because the tensors $B^*\eta_i$ do not vanish identically on any open subset of $U$. Therefore, we may choose a basis $\SetDefine{\eta_i}{i=1,\ldots N}$ of $V$ which is orthonormal with respect to the inner product $A$. Define $h_i:=\chi B^*\eta_i$ for $i\in\SetDefine{1,\ldots, N}{}$. 
	Now let  $h\in H^k_{\beta}(S^2M)$ be arbitrary and make the ansatz
	\begin{align*}
		\hat{h}=h-\sum_{i=1}^N\alpha_i h_i,\qquad \alpha_i\in\R.
	\end{align*}
	By Lemma~\ref{lem:solv_criterion}, we have \begin{align}\label{eq:ansatz-for-decomposition}
		\hat{h}\in B^{-1}(\Delta_1(H^{k+1}_{\beta+1}(T^*M)))
	\end{align}
	if and only if 
	$(\hat{h},B^*\eta)_{L^2}=0$ for all $\eta\in V$, or equivalently, if and only if $(\hat{h},B^*\eta_j)_{L^2}=0$ for all $j\in\SetDefine{1,\ldots,N}{}$. By construction of the $h_i$,
	\begin{align*}
		(\hat{h},B^*\eta_j)_{L^2}=(h,B^*\eta_j)_{L^2}-\sum_{i=1}^N\alpha_i(h_i,B^*\eta_j)_{L^2}=(h,B^*\eta_j)_{L^2}-\sum_{i=1}^N\alpha_iA(\eta_i,\eta_j).
	\end{align*}
	Because $A(\eta_i,\eta_j)=\delta_{ij}$ by construction, \eqref{eq:ansatz-for-decomposition} is satisfied if and only if  $\alpha_i=(\hat{h},B^*\eta_i)_{L^2}$ for all $i\in\SetDefine{1,\ldots,N}{}$. This proves the Lemma.
\end{proof}
\begin{lem}\label{lem:complement_gauge}
	We have 
	\begin{align*}
		\kernel_{H^k_{\beta}}(B)\cap \delta^*(H^{k+1}_{\beta+1}(T^*M))=\delta^*(\kernel_{H^{k+1}_{\beta+1}}(\Delta_1)).
	\end{align*}
	Furthermore, for any subspace   $Z^k_{\beta}\subset\kernel_{H^k_{\beta}}(B)$ with
	\begin{align}\label{eq:complement-of-delta-star-kernel}
		\kernel_{H^k_{\beta}}(B)=Z^k_{\beta}\oplus \delta^*(\kernel_{H^{k+1}_{\beta+1}}(\Delta_1)),
	\end{align}
	we also have
	\begin{align*}
		B^{-1}(\Delta_1(H^{k+1}_{\beta+1}(T^*M)))=Z^k_{\beta}\oplus \delta^*(H^{k+1}_{\beta+1}(T^*M)).
	\end{align*} 
\end{lem}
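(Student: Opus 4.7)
\textbf{Proof plan for Lemma~\ref{lem:complement_gauge}.}
The whole argument hinges on a single Weitzenböck-type identity: on a Ricci-flat manifold one has
\[
B \circ \delta^* = \tfrac{1}{2}\Delta_1 \qquad \text{on } C^{\infty}(T^*M).
\]
This can be read off the commutation formulas in \eqref{commutation} (taking the formal adjoint of $\Delta_1\circ \delta = \delta\circ \Delta_E$ in the Ricci-flat case gives $\delta^*\circ \Delta_1 = \Delta_E \circ \delta^*$, and an analogous direct computation using $\trace(\delta^*\omega)=-\delta\omega$ together with the Ricci-flat Bochner identity yields $\delta\circ \delta^* = \tfrac{1}{2}(\Delta_1 + d\delta)$, whence $B\delta^*\omega = \delta\delta^*\omega - \tfrac{1}{2}d\delta\omega = \tfrac{1}{2}\Delta_1\omega$). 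I would first record this identity as the opening sentence of the proof; then both statements become essentially algebraic.

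\textbf{First identity.} For the inclusion $\supset$, if $\omega\in \kernel_{H^{k+1}_{\beta+1}}(\Delta_1)$ then $B(\delta^*\omega) = \tfrac{1}{2}\Delta_1\omega = 0$, so $\delta^*\omega$ lies in both $\kernel(B)$ and $\delta^*(H^{k+1}_{\beta+1}(T^*M))$. Conversely, if $h = \delta^*\omega$ with $\omega\in H^{k+1}_{\beta+1}(T^*M)$ and $Bh=0$, then $0 = B(\delta^*\omega) = \tfrac{1}{2}\Delta_1\omega$, giving $\omega\in \kernel_{H^{k+1}_{\beta+1}}(\Delta_1)$.

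\textbf{Second identity.} For the inclusion $\supset$, elements of $Z^k_{\beta}\subset \kernel(B)$ trivially lie in $B^{-1}(\Delta_1(H^{k+1}_{\beta+1}(T^*M)))$ (since $0\in\Delta_1(H^{k+1}_{\beta+1}(T^*M))$), and for $h=\delta^*\omega$ we have $Bh = \tfrac{1}{2}\Delta_1\omega \in \Delta_1(H^{k+1}_{\beta+1}(T^*M))$. For $\subset$, take $h\in B^{-1}(\Delta_1(H^{k+1}_{\beta+1}(T^*M)))$ and choose $\omega\in H^{k+1}_{\beta+1}(T^*M)$ with $Bh=\Delta_1\omega$. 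Then
\[
B(h - 2\delta^*\omega) = Bh - \Delta_1\omega = 0,
\]
so $h - 2\delta^*\omega \in \kernel_{H^k_{\beta}}(B)$, and the splitting hypothesis \eqref{eq:complement-of-delta-star-kernel} yields $h - 2\delta^*\omega = z + \delta^*\eta$ with $z\in Z^k_{\beta}$ and $\eta\in \kernel_{H^{k+1}_{\beta+1}}(\Delta_1)$. Hence $h = z + \delta^*(2\omega+\eta)$, as desired.

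\textbf{Directness of the sum.} It remains to verify $Z^k_{\beta}\cap \delta^*(H^{k+1}_{\beta+1}(T^*M)) = \{0\}$. Since $Z^k_{\beta}\subset \kernel(B)$, the intersection is contained in $\kernel(B)\cap \delta^*(H^{k+1}_{\beta+1}(T^*M))$, which by the first part equals $\delta^*(\kernel_{H^{k+1}_{\beta+1}}(\Delta_1))$; the decomposition \eqref{eq:complement-of-delta-star-kernel} then forces the intersection to be trivial. The only real obstacle in the whole argument is making sure the commutation formula $B\circ \delta^* = \tfrac{1}{2}\Delta_1$ is justified cleanly on weighted spaces in the Ricci-flat setting; once stated, the remaining manipulations are purely linear-algebraic.
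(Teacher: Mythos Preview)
Your proof is correct and follows essentially the same route as the paper's: both reduce everything to the Ricci-flat identity $B\circ\delta^* = c\,\Delta_1$, then read off the first assertion and use it to check directness and both inclusions of the second via the same subtraction trick $h-\delta^*\omega_0\in\kernel(B)$. Incidentally, your constant $c=\tfrac{1}{2}$ is the right one (consistent with the paper's own formula $2\delta\circ B^*=\Delta_1$ in the preceding lemma), whereas the paper writes $B\circ\delta^*=\Delta_1$; the discrepancy is immaterial to the argument.
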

\begin{proof}
	The first assertion is immediate from the formula
	\begin{align*}
		B\circ \delta^*=	\Delta_1+\ric=\Delta_1,
	\end{align*}
	which follows from a straightforward calculation. Since $\kernel_{H^{k+1}_{\beta+1}}(\Delta_1)$ is finite dimensional
	we can choose a subspace  $Z^k_{\beta}\subset\kernel_{H^k_{\beta}}(B)$ such that
	\begin{align*}
		\kernel_{H^k_{\beta}}(B)=Z^k_{\beta}\oplus \delta^*(\kernel_{H^{k+1}_{\beta+1}}(\Delta_1)).
	\end{align*}
	For the second assertion, consider the sum $Z^k_\beta+ \delta^*(H^{k+1}_{\beta+1}(T^*M))$.
	Due to the first assertion, we have  
	\begin{align*}
		Z^k_{\beta}\cap \delta^*(H^{k+1}_{\beta+1}(T^*M))	=0,
	\end{align*}
	so the sum is direct. It remains to show that the sum is equal to $B^{-1}(\Delta_1(H^{k+1}_{\beta+1}(T^*M)))$. For an arbitrary $h\in B^{-1}(\Delta_1(H^{k+1}_{\beta+1}(T^*M)))$, we choose a form $\omega_0\in H^{k+1}_{\beta+1}(T^*M)$ such that $Bh=\Delta_1\omega_0$ so that $h-\delta^*\omega_0\in 	\kernel_{H^k_{\beta}}(B)$. Now by the first assertion, we can write $h-\delta^*\omega_0=\delta^*\omega_1+h_1$, with some $h_1\in Z^k_{\beta}$ and $h=\delta^*(\omega_0+\omega_1)+h_1$ is the desired decomposition.
\end{proof}

The following proposition asserts that the Bianchi gauge is a very reasonable one for any asymptotically conical Ricci-flat manifold $(M,g)$: Any metric $\tilde{g}$ sufficiently close to $g$ can be brought by a diffeomorphism into Bianchi gauge, possibly up to an open subset we are free to choose.
\begin{thm}\label{slice}
	Let $(M^n,g)$ be an AC Ricci-flat manifold and let $k>n/2+1$ and $\beta<0$ be such that $\beta+1$ is a nonexceptional value for $\Delta_1$ and $\beta$ is a nonexceptional value for $B$.  Pick an open and precompact subset $U\subset M$ and a complement $Z^k_{\beta}$ as \eqref{eq:complement-of-delta-star-kernel}.
	Set
	\begin{align*}
		\mathcal{H}_{\beta}^k:=\SetDefine{g+h}{h\in Z^k_{\beta}}.
	\end{align*} 
	Then there exists an $H^k_{\beta}$-neighborhood $\mathcal{U}^k_{\beta}$ of $g$ in the space of metrics such that for any $\tilde{g}\in \mathcal{U}_{\beta}^k$, there exists a diffeomorphism $\varphi$ which is $H^{k+1}_{\beta+1}$-close to the identity and a tensor $\tilde{h}$ with $\supp(\tilde{h})\subset\ol{U}$ such that $\varphi^*(\tilde{g}+\tilde{h})\in \mathcal{H}_{\beta}^k$.
\end{thm}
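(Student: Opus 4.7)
The plan is to prove the theorem as a Hilbert-space implicit function theorem argument, with all the algebraic input already assembled in Lemmas~\ref{lem:compact_perturbation2} and~\ref{lem:complement_gauge}. Combining the two lemmas yields the direct sum decomposition
\[
H^k_\beta(S^2M) \;=\; Z^k_\beta \;\oplus\; \delta^*(H^{k+1}_{\beta+1}(T^*M)) \;\oplus\; \mathrm{span}(h_1,\ldots,h_N),
\]
and I write $\pi$ for the projection onto the last two summands. Concretely, the goal becomes: find a small $\omega\in H^{k+1}_{\beta+1}(T^*M)$ and a small $\alpha=(\alpha_1,\ldots,\alpha_N)\in\R^N$ such that, with $\varphi:=\varphi_\omega$ and $\tilde h:=\sum_i\alpha_i h_i$ (which has support in $\bar U$ by Lemma~\ref{lem:compact_perturbation2}), the tensor $\varphi^*(\tilde g+\tilde h)-g$ lies in $Z^k_\beta$, i.e.\ its $\pi$-part vanishes.

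First I would parametrise diffeomorphisms near the identity by one-forms, for instance setting $\varphi_\omega(x):=\exp_x(\omega^\sharp_x)$ using the Riemannian exponential of $g$. The hypothesis $k>n/2+1$ gives the Sobolev embedding $H^{k+1}_{\beta+1}\hookrightarrow C^1_{\beta+1-\varepsilon}$, so that $\varphi_\omega$ is a $C^1$-diffeomorphism for $\omega$ small and that the pullback action $(\omega,\tilde g)\mapsto \varphi_\omega^*\tilde g$ is a smooth map between the relevant weighted Sobolev spaces (using the multiplication property of $H^k_\beta$ for $\beta\le 0$). Then define
\[
F\colon H^{k+1}_{\beta+1}(T^*M)\times\R^N\times H^k_\beta(S^2_+M)\longrightarrow \delta^*(H^{k+1}_{\beta+1}(T^*M))\oplus\mathrm{span}(h_i),
\]
\[
F(\omega,\alpha,\tilde g)\;:=\;\pi\!\left(\varphi_\omega^*\!\Big(\tilde g+\sum_i\alpha_i h_i\Big)-g\right).
\]
Clearly $F(0,0,g)=0$, and using $\partial_t|_{t=0}\varphi_{t\omega}^*g=\mathcal{L}_{\omega^\sharp}g=2\delta^*\omega$ one finds
\[
D_{(\omega,\alpha)}F(0,0,g)(\dot\omega,\dot\alpha)\;=\;\pi\!\left(2\delta^*\dot\omega+\sum_i\dot\alpha_i h_i\right)\;=\;2\delta^*\dot\omega+\sum_i\dot\alpha_i h_i,
\]
since both $\delta^*\dot\omega$ and each $h_i$ already sit in the complement of $Z^k_\beta$. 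By construction of the decomposition, this differential is surjective onto the target space.

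The main obstacle is the degeneracy of this differential in the $\omega$-direction: its kernel consists of those $\omega\in H^{k+1}_{\beta+1}(T^*M)$ with $\delta^*\omega=0$, i.e.\ of Killing one-forms in the given weighted class. On a Ricci-flat background any Killing one-form satisfies $\Delta_1\omega=0$, so by Lemma~\ref{prop:surj_Laplace} this kernel is finite-dimensional (and is actually trivial when $\beta+1<0$). Since finite-dimensional subspaces of a Hilbert space are topologically complemented, I would restrict $\dot\omega$ to a closed complement of this kernel inside $H^{k+1}_{\beta+1}(T^*M)$, on which the differential becomes a bijection onto the target; the standard Hilbert-space implicit function theorem then produces, for every $\tilde g$ in a sufficiently small $H^k_\beta$-neighbourhood $\mathcal{U}^k_\beta$ of $g$, a solution $(\omega,\alpha)$ depending continuously on $\tilde g$ and vanishing at $\tilde g=g$. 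The remaining checks — that $\varphi_\omega$ is a genuine diffeomorphism of $M$, is $H^{k+1}_{\beta+1}$-close to the identity, and that the pullback action has the claimed smoothness in weighted Sobolev spaces — are standard applications of the Sobolev embedding and multiplication properties recorded in Subsection~\ref{subsec:analysis_conifolds}, and present no essential analytic obstacle beyond the hypothesis $k>n/2+1$.
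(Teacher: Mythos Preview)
Your argument is correct and follows essentially the same strategy as the paper: both combine Lemmas~\ref{lem:compact_perturbation2} and~\ref{lem:complement_gauge} to obtain the direct-sum decomposition of $H^k_\beta(S^2M)$ and then apply an implicit function theorem argument. The differences are cosmetic: the paper parametrises diffeomorphisms by the time-$1$ flow $\psi_X$ of a vector field (noting that $\beta+1<1$ ensures sublinear growth, hence completeness) rather than by the Riemannian exponential, and the paper packages the argument as showing that the map $\Psi\colon(\bar g,X,\tilde h)\mapsto\psi_X^*\bar g+\tilde h$ is a local diffeomorphism, whereas you solve the equation $F=0$ for $(\omega,\alpha)$ with $\tilde g$ as parameter. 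Your treatment of the Killing-field kernel is in fact more explicit than the paper's, which simply asserts that the differential ``corresponds to the decomposition'' without discussing injectivity.
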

\begin{rem}
	If $\beta<-1$, we get	$Z^k_{\beta}=\kernel_{H^k_{\beta}}(B)$ and hence $\mathcal{H}_{\beta}^k=\mathcal{G}_{\beta}^k$ because $\Delta_1$ is injective on $H^{k+1}_{\beta+1}(T^*M)$, see Lemma~\ref{prop:surj_Laplace}. On the other hand, if $\beta>1-n$, the assertion holds without adding the tensor $k$, because $\Delta_1$ is then surjective on $H^{k+1}_{\beta+1}(T^*M)$, see again Lemma~\ref{prop:surj_Laplace}.
\end{rem}
\begin{rem}\label{rem:gauge_mutiple_ends}
	A completely analogous statement holds in the case of one conically singular end, with the only difference that $\beta$ is chosen positive. In case of multiple ends, we also have an analogous assertion. In this case one would work with function spaces with multiple weights which were briefly introduced in Remark~\ref{rem:multiple ends}. One would then choose a tuple $\beta=(\zeta,\sigma)$ with $\zeta<0$ and $\sigma>0$.
\end{rem}
\begin{proof}
	Clearly, we have 
	\begin{align*}
		T_{g}\mathcal{H}_{\beta}^k=Z^k_{\beta}.
	\end{align*}
	Choose tensors $h_i$, $i\in\SetDefine{1,\ldots,N}{}$ with support in $\ol{U}$ as in Lemma~\ref{lem:compact_perturbation2} and let $\wt{V}=\mathrm{span}_{\R}(h_i)_{1\leq i\leq N}$.	
	Then, Lemma~\ref{lem:complement_gauge} implies that 
	\begin{align}\label{eq:gauge_decomp2}
		H^k_{\beta}(S^2M)=	 Z^k_{\beta}\oplus \delta^*(H_{\beta+1}^{k+1}(T^*M))\oplus \wt{V}=
		T_{g}\mathcal{H}_{\beta}^k\oplus \mathcal{L}\circ \sharp(H_{\beta+1}^{k+1}(T^*M))	\oplus \wt{V}.
	\end{align}
	Here, we used that $2\delta^*=\mathcal{L}\circ \sharp$, where $\mathcal{L}:X\to \mathcal{L}_Xg$ is the Lie derivative and $\sharp:\omega\mapsto\omega^{\sharp}$ is the sharp operator with respect to $g$.	
	Note that because $\beta+1<1$, the vector fields in $H^{k+1}_{\beta+1}(TM)$ are all complete, because they grow slower than linearly. Therefore, we have a well-defined map
	\begin{align*}
		H^{k+1}_{\beta+1}(TM)\ni X\mapsto \psi_X\in H^{k}_{\beta}(\mathrm{Diff}(M)),	
	\end{align*}
	where $\psi_X$ is the flow of $X$, evaluated at time $t=1$.
	Now we consider the smooth map
	\begin{align*}
		\Psi:  \mathcal{H}_{\beta}^k\times H^{k+1}_{\beta+1}(TM)\times \wt{V}\to H_{\beta}^k(S^2_+M),\qquad
		(g,X,\tilde{h})\mapsto (\psi_X)^*g+\tilde{h}.
	\end{align*}
	Its differential at $(g,0,0)$ corresponds to the decomposition \eqref{eq:gauge_decomp2}. Therefore, $\Psi$ is a local diffeomorphism from a neighborhood of $(g,0,0)$ onto a neighborhood of $g$ by the implicit function theorem. This proves the Proposition.
\end{proof}
\begin{rem}
	Note that the essential reason for the assumption $\beta<0$ is to guarantee completeness for the vector fields in $H^{k+1}_{\beta+1}$. In the conically singular case, completeness is guaranteed by assuming $\beta>0$ as this implies that the vector fields decay at the singularity like $\mathcal{O}(r^{1+\epsilon})$.
\end{rem}

\subsection{Proof of the main results}
This section is devoted to the proof of Theorem~\ref{mainthm:conifold_rate} and essentially builds up on the slice theorem \ref{slice}. We prove only for the case of one AC end. The case of one conically singular end is completely analogous. The proof for multiple ends of both types is also analogous and based on a slice theorem for multiple ends, see Remark~\ref{rem:gauge_mutiple_ends}. The details are left to the reader. Recall the notation $\ol{M}_{<R}$ and $\ol{M}_{>R}$ from \eqref{eq:sublevel-set-of-the-radius-function}.
\begin{thm}\label{thm:AC_chart}
	Let $(M,g)$ be an asymptotically conical Ricci-flat manifold. Then there exist compact set $K\subset M$ and an asymptotic chart $\varphi:M\setminus K\to \ol{M}_{>R}$ such that
	\begin{itemize}
		\item[(i)] if $(\ol{M},\ol{g})$ is not resonance-dominated, we have $\varphi_*g-\ol{g}\in \mathcal{O}_{\infty}(r^{-\xi_-})$ as $r\to\infty$ where $\xi_-$ has been defined in \eqref{eq:indicial_roots},
		\item[(ii)] if $(\ol{M},\ol{g})$ is resonance-dominated, we have $\varphi_*g-\ol{g}\in \mathcal{O}_{\infty}(r^{-\frac{n-2}{2}}\log(r))$ as $r\to\infty$.	
	\end{itemize}
\end{thm}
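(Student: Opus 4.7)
The plan is to combine the slice theorem (Theorem~\ref{slice}) with the decay result for gauged Ricci-flat metrics on cones (Theorem~\ref{thm:Ricci_flat_bianchi_gauge}), and then to remove the remaining Lie-derivative contributions by modifying the chart with explicit flows to reach the rate $\xi_-$.

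First I would fix any asymptotic chart $\varphi_0\from M\setminus K_0\to \ol{M}_{>R_0}$ giving $(\varphi_0)_*g-\ol g=\O(r^{-\tau_0})$ for some $\tau_0>0$, and construct a reference metric $\tilde g$ on $M$ which equals $g$ on a large compact set and equals $\varphi_0^*\ol g$ on a neighbourhood of infinity by interpolating with a smooth cutoff. Then $\tilde g-g\in H^k_\beta(S^2M)$ for any $\beta\in(-\tau_0,0)$ and any $k\in\N$, and $\tilde g$ is arbitrarily $H^k_\beta$-close to $g$ if we push the cutoff out far enough. Next I choose a nonexceptional weight $\beta<0$ satisfying the hypotheses of Theorem~\ref{slice}, and apply that theorem with an open precompact set $U$ contained in the interior of the cutoff region. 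This yields a diffeomorphism $\psi$ of $M$, close to the identity in $H^{k+1}_{\beta+1}$, and a tensor $\tilde h$ with $\supp\tilde h\subset\ol U$ such that
\begin{equation*}
V\bigl(g,\,\psi^*(\tilde g+\tilde h)\bigr)=0.
\end{equation*}
Outside of $\ol U$ and the cutoff region, $\psi^*(\tilde g+\tilde h)=(\varphi_0\circ\psi)^*\ol g$, so by equivariance of $V$ under diffeomorphisms we get $V((\varphi_0\circ\psi)_*g,\ol g)=0$ on a neighbourhood of infinity in $\ol M$. Setting $\varphi_1:=\varphi_0\circ\psi$ (still an asymptotic chart because $\psi$ is close to the identity at infinity), the pushforward $(\varphi_1)_*g$ is in Bianchi gauge with respect to $\ol g$ near infinity.

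Now I apply Theorem~\ref{thm:Ricci_flat_bianchi_gauge}(ii) to $(\varphi_1)_*g$. In the resonance-dominated case this immediately yields $(\varphi_1)_*g-\ol g=\O_\infty(r^{-(n-2)/2}\log r)$, which is statement (ii) of the theorem. In the non-resonance case it yields the a priori weaker rate $(\varphi_1)_*g-\ol g=\O_\infty(r^{-\xi^B_-})$; if $\xi^B_-=\xi_-$ we are done, otherwise we need to improve. For this, write the asymptotic expansion of $(\varphi_1)_*g-\ol g$: by Corollary~\ref{cor:decay_gauged_kernel_LL} its leading homogeneous part lies in $\ker\ol\Delta_L\cap\ker B_{\ol g}$, and since $\xi^B_-<\xi_-$ this leading part corresponds to an indicial value in $\mathrm{Re}(-E_B)\setminus\mathrm{Re}(-E)$. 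By Proposition~\ref{prop:Bianchi_indicial_values} it is therefore a Lie derivative $\mathcal{L}_X\ol g$ for an explicit radially homogeneous vector field $X$ of growth $\O(r^{1-\xi^B_-})$. Composing $\varphi_1$ with the time-one flow of $-X$ (defined near infinity since $X$ grows sublinearly) yields a new asymptotic chart which cancels this leading term, pushing the next leading rate to the next element of $\mathrm{Re}(-E_B)$. The set $\mathrm{Re}(-E_B)\cap(0,\xi_-)$ is finite, so this iteration terminates after finitely many steps in a chart $\varphi$ with $\varphi_*g-\ol g=\O_\infty(r^{-\xi_-})$.

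The main obstacle will be maintaining control over the Bianchi gauge through the iteration in the non-resonance case: composing with the flow of $-X$ destroys the condition $V(\varphi_*g,\ol g)=0$, so at each step one must reapply Theorem~\ref{slice} before invoking Theorem~\ref{thm:Ricci_flat_bianchi_gauge} again to read off the next rate. One has to verify that each re-gauging introduces only strictly higher-order corrections, so that the leading order one is trying to cancel is not recreated by the gauge transformation, and also that the new chart at every step remains a genuine asymptotic chart (which follows from the fact that both $\psi$ and the flows used are close to the identity at infinity).
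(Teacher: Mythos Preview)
Your initial setup---interpolate to bring the cone metric into the neighbourhood of Theorem~\ref{slice}, gauge to obtain a chart in which the pushforward is in Bianchi gauge near infinity, then invoke Theorem~\ref{thm:Ricci_flat_bianchi_gauge} to reach the rate $\xi^B_-$---is exactly what the paper does. The divergence is in the passage from $\xi^B_-$ to $\xi_-$ in the non-resonance case.

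You propose to peel off each Lie-derivative leading term by composing the chart with the flow of the corresponding vector field and then re-applying the slice theorem at a strictly better weight; you correctly flag that the flow destroys the Bianchi gauge and that one must check the re-gauging does not reinstate the term just removed (it does not, because the new gauge diffeomorphism lies in $H^{k+1}_{\beta'+1}$ with $\beta'<-\xi^B_-$). The paper avoids this loop entirely. It applies the slice theorem once and then exploits that the resulting difference $h_0=\psi^*(\tilde g+\tilde h)-g$ lies in the transversal $Z^k_\beta$ to the Lie-derivative directions. Writing $h_0=\delta^*\omega_1+h_1$ with $h_1$ of strictly faster decay, and re-decomposing $h_1$ at a smaller weight $\beta'$ using a nested choice $Z^k_{\beta'}\subset Z^k_\beta$, the directness of the slice decomposition forces the total Lie-derivative part to vanish, so $h_0$ itself has the improved decay. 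The chart is never changed and the Bianchi gauge is never lost; one simply re-runs the asymptotic analysis of Theorem~\ref{thm:Ricci_flat_bianchi_gauge} from the better starting rate. Your route is viable and perhaps more geometrically transparent, but it costs an extra gauge-fixing per step; the paper's route trades that for the algebraic observation that membership in $Z^k_\beta$ already excludes Lie derivatives.
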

\begin{figure}[tbh!]
	\centering
	\includegraphics{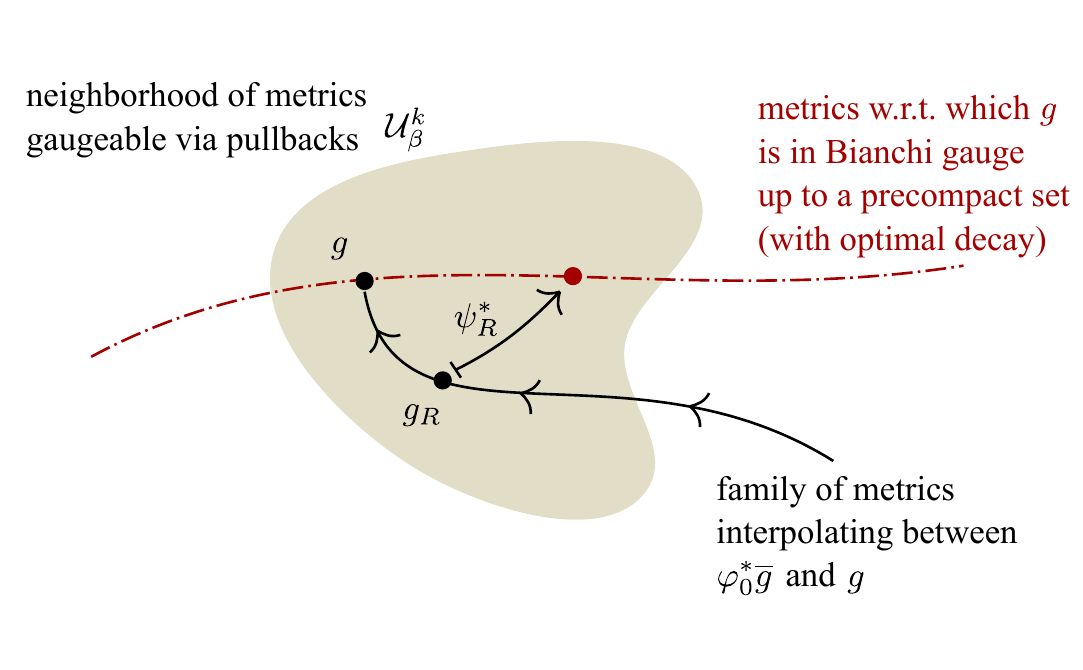}
	\caption[Construction of the new asymptotic chart]{Construction of the new asymptotic chart in the proof of Theorem~\ref{thm:AC_chart}. The picture takes place in the set of metrics on $M$. The dash-dotted line represents metrics that are (up to a precompact set) in Bianchi gauge w.r.t. $g$. The shaded region represents the neighborhood $\mathcal U^k_\beta$ of metrics gaugeable by pullbacks, cf. Theorem~\ref{slice}. The family of metrics $g_R$ converges to $g$, therefore it will eventually enter the neighborhood $\mathcal U^k_\beta$.}
	\label{fig:new-asymptotic-chart}
\end{figure}
\begin{proof}
	Because $(M,g)$ is asymptotically conical, we find a compact set $K_0\subset M$ and a diffeomorphism $\varphi_0:M\setminus K_0\to \ol{M}_{>R}$  such that $(\varphi_0)^*g-\ol{g}\in \mathcal{O}_{\infty}(r^{-\tau})$ for some $\tau>0$ as $r\to\infty$. Let $f_R:\R_{\geq 0}\to [0,1]$ be a smooth cutoff function such that
	\begin{align*}
		f_R|_{[0,R]}\equiv 1,\qquad f_R|_{[2R,\infty)}\equiv 0,
		\qquad |f^{(k)}_R|\leq C_k\cdot R^{-k}\text{ for all }k\in\N
	\end{align*}
	for some constants $C_i\in\R$.
	Choose a radius function $\rho$ on $M$ such that $(\varphi_0)_*\rho=r$
	and  let $F_R=f_R\circ \rho$. 
	Let $R_0:=\max_K \rho$, and for $R\geq R_0$, define a metric $g_R$ on $M$ by
	\begin{align*}
		g_R=F_R\cdot g+(1-F_R)(\varphi_0)^*\ol{g}.
	\end{align*}
	Inside $M_{<R}$, the metric $g_R$ agrees with $g$ and on $M_{>2R}$, the metric $g_R$ agrees with $(\varphi_0)^*\ol{g}$. Moreover, by the choice of $F_R$, we have
	$g_R\to g$ in $ C^{k}_{-\tau}(S^2M)$ for any $k\in \N$ as $R\to \infty$. Thus for any $\beta>-\tau$,
	we also get $g_R\to g$ in $H^k_{\beta}(S^2M)$
	for any $k\in \N$.
	Choose $k>n/2+2$ pick an open set $U\subset K_0\subset M$.  
	Then by Theorem~\ref{slice}, there exists for a sufficiently large value for $R$ (which we denote again by $R$), a diffeomorphism $\psi_{R}$ 
	and a tensor $h_R$  with compact support in $\ol{U}\subset K_0$ such that
	such that
	\begin{align}\label{eq:gauging-up-to-a-compact-set}
		V(g,(\psi_R)^*(g_{R}+h_R))=0.	
	\end{align}
	Consider now the compact set $K:=(\varphi_{R})^{-1}(K_0)$ and the diffeomorphism
	\begin{align*}
		\varphi:=\varphi_0\circ\psi_{R}: M\setminus K\to\ol{M}_{>R}.	
	\end{align*}
	By construction, the function $F_{R}$ vanishes on the set $M_{>2R} \subset M$.
	Furthermore, $h_R$ vanishes on the domain of $\varphi_0$.
	Therefore we have on the set $\ol{M}_{>2R}$ that $\varphi_*(\psi_R)^*g_{R}=\varphi_*(\psi_R)^*(\varphi_0)^*\ol{g}=\ol{g}$ and consequently by \eqref{eq:gauging-up-to-a-compact-set}
	\begin{align*}
		-2\ric_{\varphi_*g}+	\mathcal{L}_{V(\varphi_*g,\ol{g})}(\varphi_*g)=	\mathcal{L}_{V(\varphi_*g,\varphi_*(\psi_R)^*g_{R_1})}(\varphi_*g)=0.	
	\end{align*}
	Because $k$ was chosen to satisfy $k>n/2+2$, we have $\varphi_*g-	\ol{g}=\mathcal{O}_{2}(r^{-\beta/2}) $. 
	From Theorem~\ref{thm:Ricci_flat_bianchi_gauge}, we obtain that
	\begin{align*}
		\varphi_*g-	\ol{g}=\mathcal{O}_{\infty}(r^{-\xi^B_-}). 	
	\end{align*}
	If $\xi^B_-=\xi_-$, we are done.
	If $\xi^B_-<\xi_-$, we know 
	that the leading term of the expansion at infinity is a Lie derivative. In other words, we can decompose
	\begin{align}\label{eq:lie_decomposition}
		\varphi_*g-	\ol{g}=\varphi_*(g-(\psi_R)^*g_{R})=\ol{\delta}^*\ol{\omega}+\hat{h},\qquad  \hat{h}=\mathcal{O}_{\infty}(r^{-\xi^B_--\epsilon}),	
	\end{align}
	for some $\epsilon>0$ and $\ol{\omega}$. In the following, we are going to show that the leading gauge term $\ol{\delta}^*\ol{\omega}$ indeed vanishes.
	Consider the difference
	\begin{align*}
		h_0:=(\psi_R)^*(g_{R}+h_R)-g\in H^k_{\beta}(S^2M),\qquad\text{ for }\qquad k>\frac{n}{2}+2,\qquad \beta>-\xi^B_-.
	\end{align*}
	Consider the 1-form $\ol{\omega}$ in \eqref{eq:lie_decomposition} and extend the form $\varphi^*\ol{\omega}$ on $M\setminus K$ to a smooth 1-form $\omega_1$ on all of $M$. Then, we have that 
	\begin{align*}
		h_0=\delta^*\omega_1+h_1,\qquad h_1=\mathcal{O}_{\infty}(r^{-\xi^B_--\epsilon}) .
	\end{align*}
	for some $\epsilon>0$.
	Recall that from the proof of Proposition~\ref{slice}, we have $h_0\in Z^k_{\beta}$, where $Z^k_{\beta}$ is a space such that
	\begin{align*}
		\kernel_{H^k_{\beta}}(B)=Z^k_{\beta}\oplus \delta^*(\kernel_{H^{k+1}_{\beta+1}}(\Delta_1)).
	\end{align*}
	Let us proceed with the tensor $h_1$. At first we know
	\begin{align*}
		h_1\in 	 H^k_{\beta'}(S^2M),\qquad\text{ for }\qquad  k>\frac{n}{2}+2,\qquad \beta'>-\xi^B_--\epsilon.
	\end{align*}
	Now choose $\beta'\in (\xi^B_--\epsilon,\xi^B_-)$ and choose a subspace $Z^k_{\beta'}$ with 
	\begin{align*}
		\kernel_{H^k_{\beta'}}(B)=Z^k_{\beta'}\oplus \delta^*(\kernel_{H^{k+1}_{\beta'+1}}(\Delta_1))
	\end{align*}
	which additionally satisfies
	\begin{align*}
		Z^k_{\beta'}\subset Z^k_{\beta}.
	\end{align*}
	Due to \eqref{eq:gauge_decomp2}, we can write 
	\begin{align*}
		h_1=\delta^*\omega_2+h_2+k_2
	\end{align*}
	with $\omega_2\in H^{k+1}_{\beta'+1}$ and $k_2$ is a tensor with support in a small region. Rearranging yields
	\begin{align*}
		h_0=\delta^*(\omega_1+\omega_2)+h_2+k_2\subset \delta^*(H^k_{\beta}(T^*M))\oplus 	Z^k_{\beta'} \oplus \tilde{V}
		\subset \delta^*(H^k_{\beta}(T^*M))\oplus 	Z^k_{\beta} \oplus \tilde{V}	,	 
	\end{align*}
	where we also have taken into account that the sums on the right-hand side are all direct by \eqref{eq:gauge_decomp2}.
	But because $h_0\in Z^k_{\beta}$, this actually implies that $\delta^*(\omega_1+\omega_2)=0$.
	Thus,
	\begin{align*}
		h_0=h_2+k_2\in 	Z^k_{\beta'} \oplus \tilde{V}\subset H^k_{\beta'}(S^2M)
	\end{align*}
	therefore
	\begin{align*}
		h_0=\O_{2}(r^{-\xi_-^B-\epsilon'})	
	\end{align*}
	for some $\epsilon'>0$. Pulling back to the cone $\ol{M}$, this implies (by using elliptic regularity) that 
	\begin{align}%
		\varphi_*g-	\ol{g}=\varphi_*(g-g_R)=\mathcal{O}_{\infty}(r^{-\xi^B_--\epsilon'}).
	\end{align}
	By successively improving the decay rate as in the proof of Theorem~\ref{thm:AC_chart} and repeating the above procedure a finite number of times, we obtain
	\begin{align}%
		\varphi_*g-	\ol{g}=\varphi_*(g-g_R)=\mathcal{O}_{\infty}(r^{-\xi_-}),
	\end{align}
	as desired.
\end{proof}
With a combination of the analysis done in Theorem~\ref{thm:Ricci_flat_bianchi_gauge} and Theorem~\ref{thm:AC_chart}, one also obtains the following result:
\begin{thm}\label{thm:common_AC_chart}
	Let $(M,g)$ be a Ricci-flat AC manifold and $\tilde{g}$ be another Ricci-flat metric which is in Bianchi gauge with respect to $g$. Assume that $g-\tilde{g}=\mathcal{O}(r^{-\alpha})$ for some $\alpha>0$, as $r\to\infty$. Then,
	\begin{itemize}
		\item[(i)] if $(M,g)$ is not resonance-dominated, we have $g-\tilde{g}\in \mathcal{O}_{\infty}(r^{-\xi_-})$ as $r\to\infty$ where $\xi_-$ has been defined in \eqref{eq:indicial_roots},
		\item[(ii)] if  $(M,g)$ is resonance-dominated, we have $g-\tilde{g}\in \mathcal{O}_{\infty}(r^{-\frac{n-2}{2}}\log(r))$ as $r\to\infty$.	
	\end{itemize}
	In view of Theorem~\ref{thm:AC_chart} this means the following: For both metrics $g,\tilde{g}$, we can pick one common asymptotic chart for which we have the optimal decay rate from Theorem~\ref{thm:AC_chart}. 
\end{thm}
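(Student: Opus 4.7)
The plan is to combine the slice construction of Theorem~\ref{thm:AC_chart} with the iteration scheme of Theorem~\ref{thm:Ricci_flat_bianchi_gauge}, reducing the analysis of $\tilde{g}-g$ to the model cone via pushforward. First I would apply Theorem~\ref{thm:AC_chart} to $(M,g)$ to fix an asymptotic chart $\varphi\colon M\setminus K\to\ol{M}_{>R}$ for which $g_0:=\varphi_*g$ satisfies the optimal bound $g_0-\ol{g}=\mathcal{O}_{\infty}(r^{-\xi_-})$ (resp.\ $\mathcal{O}_{\infty}(r^{-(n-2)/2}\log r)$ in the resonance-dominated case). Since the Bianchi gauge is natural under diffeomorphisms, $\tilde{g}_0:=\varphi_*\tilde{g}$ is Ricci-flat and satisfies $V(\tilde{g}_0,g_0)=0$. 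With $\bar{h}:=\tilde{g}_0-g_0$, the hypothesis gives $\bar{h}=\mathcal{O}_{2}(r^{-\alpha})$, and improving this to the claimed rate suffices, since $\tilde{g}_0-\ol{g}=\bar{h}+(g_0-\ol{g})$ will then inherit the same rate.

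Repeating the derivation leading to~\eqref{eq:ricci_de_Turck2} with $(\ol{g},g)$ replaced by $(g_0,\tilde{g}_0)$, one obtains the Ricci--De Turck equation
\begin{align*}
\Delta_L^{g_0}\bar{h}=\tilde{g}_0^{-1}*\mathrm{Rm}^{g_0}*\bar{h}*\bar{h}+\tilde{g}_0^{-1}*\tilde{g}_0^{-1}*\nabla^{g_0}\bar{h}*\nabla^{g_0}\bar{h}+\tilde{g}_0^{-1}*(\nabla^{g_0})^2\bar{h}*\bar{h}.
\end{align*}
Writing $\Delta_L^{g_0}=\ol{\Delta}_L+P$, where $P$ is a second-order differential operator whose coefficients are controlled by $g_0-\ol{g}$, and extending $\bar{h}$ to all of $\ol{M}$ via a cutoff, one arrives at $\ol{\Delta}_L\bar{h}=\mathcal{O}_{\infty}(r^{-\min(\xi_-+\alpha,\,2\alpha)-2})$.

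The core of the argument is now an iteration on the cone, exactly in the spirit of Theorem~\ref{thm:Ricci_flat_bianchi_gauge}: I would apply Proposition~\ref{prop:conical_isomorphism} to solve $\ol{\Delta}_Lh_1=\ol{\Delta}_L\bar{h}$ in suitable weighted Sobolev spaces, obtain $h_0:=\bar{h}-h_1\in\kernel(\ol{\Delta}_L)$ whose growth is controlled by the indicial roots of $\ol{\Delta}_L$, and bootstrap the improved decay through the nonlinear right-hand side. After finitely many iterations, the decay rate saturates at the optimal value coming from the indicial set $E$ of Theorem~\ref{mainthm:indicial_roots_LL}.

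The hard part will be that $\bar{h}$ is in Bianchi gauge with respect to $g_0$ rather than $\ol{g}$, so Corollary~\ref{cor:decay_gauged_kernel_LL} does not apply directly to the harmonic piece $h_0$. However $B_{\ol{g}}(\bar{h})=(B_{\ol{g}}-B_{g_0})(\bar{h})=\mathcal{O}_{\infty}(r^{-\xi_--\alpha-1})$ is strictly of higher order than any Lie-derivative indicial rate, so the gauge mismatch is genuinely lower order. Following the Lie-derivative elimination argument at the end of the proof of Theorem~\ref{thm:AC_chart}, any Lie-derivative component of $h_0$ can be absorbed via the decomposition~\eqref{eq:gauge_decomp2} into a gauge transformation that does not alter the chart $\varphi$, leaving a genuinely non-Lie harmonic part to which Corollary~\ref{cor:decay_gauged_kernel_LL} applies and yields the optimal decay rate $\xi_-$ (or $\tfrac{n-2}{2}$ with logarithmic factor in the resonance-dominated case).
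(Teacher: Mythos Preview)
Your overall strategy---push forward to the cone via the optimal chart from Theorem~\ref{thm:AC_chart}, treat $\Delta_L^{g_0}-\ol{\Delta}_L$ as a lower-order perturbation, and iterate as in Theorem~\ref{thm:Ricci_flat_bianchi_gauge}---is exactly what the paper has in mind (the paper itself gives no proof beyond pointing to these two theorems). The reduction to $\xi^B_-$ via the approximate Bianchi condition $B_{\ol{g}}(\bar h)=\mathcal{O}_\infty(r^{-\xi_--\alpha-1})$ is correctly identified and goes through as in the proof of Theorem~\ref{thm:Ricci_flat_bianchi_gauge}.

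The gap is in your last paragraph. You write that any Lie-derivative component of $h_0$ ``can be absorbed via the decomposition~\eqref{eq:gauge_decomp2} into a gauge transformation that does not alter the chart $\varphi$''. This is not what happens in Theorem~\ref{thm:AC_chart}, and it cannot work here: absorbing a Lie derivative means replacing $\tilde g$ by $\psi^*\tilde g$, but the theorem asserts decay of $g-\tilde g$ itself, with no further diffeomorphism allowed. In Theorem~\ref{thm:AC_chart} the Lie-derivative term is shown to \emph{vanish}, and the mechanism is that the difference $h_0=(\psi_R)^*(g_R+h_R)-g$ lies in the transversal $Z^k_\beta$ \emph{by construction} via the slice theorem; directness of the decomposition~\eqref{eq:gauge_decomp2} then forces $\delta^*(\omega_1+\omega_2)=0$. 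In your setting $\bar h=\tilde g_0-g_0$ is simply given, and the hypothesis $V(\tilde g,g)=0$ only yields $B_{\tilde g}(\bar h)=0$, hence $B_g(\bar h)=\mathcal{O}(\bar h\,\nabla\bar h)$---so $\bar h$ is not in $\ker B_g$, let alone in $Z^k_\beta$. You therefore cannot invoke the directness argument as written. To close this step you must either (a) apply the slice theorem to $\tilde g$ relative to $g$ and argue, using the gauge hypothesis, that the resulting diffeomorphism is trivial at the relevant order, or (b) show directly that a harmonic Lie-derivative leading term is incompatible with both metrics being Ricci-flat together with the nonlinear gauge $V(\tilde g,g)=0$. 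Either route requires an argument you have not supplied.
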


We conclude this paper with computing the order of ALE manifolds, by direct application of Theorem~\ref{thm:AC_chart}.
\begin{proof}[Proof of Theorems~\ref{mainthm:orbifold_Rate} and Theorem~\ref{mainthm:ALE_Rate}]
	We first have to consider the eigenvalue data $\lambda_i$, $\mu_i$ and $\kappa_i$ for quotients $S^{n-1}/\Gamma$. At first, we have
	\begin{align}\label{eq:Laplace_spectrum_sphere}
		\spectrum(\Delta,S^{n-1}/\Gamma)\subset\spectrum(\Delta,S^{n-1})=\SetDefine{\lambda_i=i(i+n-2)}{i\in\N_0},
	\end{align}
	see e.g.\ \cite{BGM71},
	and by the equality case in the Lichnerowicz--Obata eigenvalue inequality, we have
	\begin{align*}
		\spectrum(\Delta,S^{n-1}/\Gamma)\subset\SetDefine{\lambda_i}{i\in\N_0\setminus\SetDefine{1}{}},
	\end{align*}
	whenever $\Gamma\neq\SetDefine{1}{}$. 
	In \cite[Theorem~3.2]{Bou99}, Boucetta computed the spectrum of the Lichnerowicz Laplacian on $S^n$. We conclude
	\begin{align}\label{eq:Einstein_spectrum_sphere}
		\spectrum(\Delta_E|_{TT(S^{n-1}/\Gamma)})\subset\spectrum(\Delta_E|_{TT(S^{n-1})})=\SetDefine{\kappa_i=(i+1)(i+n-1)}{i\in\N}.	
	\end{align}
	Note that the differences between \eqref{eq:Einstein_spectrum_sphere} and the values in \cite[Theorem~3.2]{Bou99} come from shifting the dimension and the eigenvalue index by one, and by switching from $\Delta_L$ to $\Delta_E$. 
	Recalling the notation from Subsection~\ref{subsec:order} (see \eqref{eq:rates}) we get in this case that
	\begin{alignat*}{99}
		\xi_+&:=\min E_+=\min\SetDefine{\xi_+(\kappa_i),\xi_+(\lambda_i)}{i\in\N}&&\geq2\\	
		\xi_-&:=\min E_-=\min\SetDefine{-\xi_-(\kappa_i),-\xi_-(\lambda_i)}{i\in\N}&&\geq n.
	\end{alignat*}
	The results now follow from Theorem~\ref{mainthm:conifold_rate}.
\end{proof}

\end{document}